%% LyX 2.2.0 created this file.  For more info, see http://www.lyx.org/.
%% Do not edit unless you really know what you are doing.

\documentclass[oneside,english]{amsart}
\usepackage{amstext}
\usepackage{amsthm}
\usepackage{amssymb}
\setcounter{tocdepth}{1}
\usepackage{color}
\usepackage{refstyle}
\usepackage{graphicx}

\usepackage[bookmarks=true,bookmarksnumbered=false,bookmarksopen=false,
 breaklinks=false,,backref=false,colorlinks=true]{hyperref}
\hypersetup{linkcolor=blue, urlcolor=blue, citecolor=blue}

\makeatletter

%%%%%%%%%%%%%%%%%%%%%%%%%%%%%% LyX specific LaTeX commands.

\AtBeginDocument{\providecommand\thmref[1]{\ref{thm:#1}}}
\AtBeginDocument{\providecommand\propref[1]{\ref{prop:#1}}}
\AtBeginDocument{\providecommand\figref[1]{\ref{fig:#1}}}
\AtBeginDocument{\providecommand\lemref[1]{\ref{lem:#1}}}
\AtBeginDocument{\providecommand\remref[1]{\ref{rem:#1}}}
\AtBeginDocument{\providecommand\corref[1]{\ref{cor:#1}}}
\RS@ifundefined{subsecref}
  {\newref{subsec}{name = \RSsectxt}}
  {}
\RS@ifundefined{thmref}
  {\def\RSthmtxt{theorem~}\newref{thm}{name = \RSthmtxt}}
  {}
\RS@ifundefined{lemref}
  {\def\RSlemtxt{lemma~}\newref{lem}{name = \RSlemtxt}}
  {}

%%%%%%%%%%%%%%%%%%%%%%%%%%%%%% Textclass specific LaTeX commands.
\numberwithin{equation}{section}
\numberwithin{figure}{section}
\theoremstyle{plain}
\newtheorem{thm}{\protect\theoremname}[section]
  \theoremstyle{remark}
  \newtheorem{rem}[thm]{\protect\remarkname}
\newenvironment{lyxlist}[1]
{\begin{list}{}
{\settowidth{\labelwidth}{#1}
 \setlength{\leftmargin}{\labelwidth}
 \addtolength{\leftmargin}{\labelsep}
 }}
{\end{list}}
  \theoremstyle{plain}
  \newtheorem{prop}[thm]{\protect\propositionname}
  \theoremstyle{definition}
  \newtheorem{defn}[thm]{\protect\definitionname}
  \theoremstyle{plain}
  \newtheorem{lem}[thm]{\protect\lemmaname}
  \theoremstyle{plain}
  \newtheorem{cor}[thm]{\protect\corollaryname}

\AtBeginDocument{
  
}

\makeatother

%\usepackage{xunicode}
%\usepackage{polyglossia}
%\setdefaultlanguage{english}
  \providecommand{\corollaryname}{Corollary}
  \providecommand{\definitionname}{Definition}
  \providecommand{\lemmaname}{Lemma}
  \providecommand{\propositionname}{Proposition}
  \providecommand{\remarkname}{Remark}
\providecommand{\theoremname}{Theorem}

\begin{document}

\title[Strichartz estimates without loss outside two convex]{Strichartz estimates without loss outside two strictly convex obstacles}

\author{David Lafontaine $^{*}$}
\begin{abstract}
We prove global Strichartz estimates without loss outside two strictly
convex obstacles, combining arguments from \cite{Ikawa2,IkawaMult}
with more recent ones inspired by \cite{MR2720226} and \cite{MR2672795}.
This is to be contrasted with the loss in the smoothing estimate for
the Schr\"{o}dinger equation with Dirichlet boundary conditions in the
presence of a trapped geodesic.
\end{abstract}

\thanks{$^{*}$ david.lafontaine@unice.fr, Universit\'{e} C\^{o}te d'Azur, CNRS,
LJAD, France.}
\maketitle

\section{Introduction}

The \textit{Strichartz estimates} are a family of dispersive estimates
for the Schr\"{o}dinger flow on a Riemannian Manifold $M$:

\[
\Vert e^{-it\Delta}u_{0}\Vert_{L^{p}(0,T)L^{q}(M)}\leq C_{T}\Vert u_{0}\Vert_{L^{2}},
\]
where the exponents $(p,q)$ have to follow the admissibility condition
given by the scaling of the equation:
\[
p,q\geq2,\ \frac{2}{p}+\frac{n}{q}=\frac{n}{2},\ (p,q,n)\neq(2,\infty,2),
\]
and $n$ is the dimension of $M$. If $M$ is a manifold with boundaries,
we understand $\Delta$ to be the Dirichlet Laplacian.

Locally in time, these estimates describe a regularizing effect, reflected
by a gain of integrability. Globally, they describe a decay effect:
the $L_{x}^{q}$ norm has to decay, at least in a $L_{t}^{p}$ sense.

We say that \textit{a loss} occurs in the Strichartz estimates, if
the estimates do not hold but we are able to show that

\[
\Vert e^{-it\Delta}u_{0}\Vert_{L^{p}(0,T)L^{q}(M)}\leq C_{T}\Vert u_{0}\Vert_{H^{s}},
\]
with $s>0$.

The story of Strichartz estimates in the case of the flat Laplacian
on $M=\mathbb{R}^{n}$ begins with the work of \cite{Strichartz}
for $p=q$, for the wave equation, then generalized for any admissible
couple and the Schr\"{o}dinger equation by \cite{GV85}, and by \cite{KeelTao}
for the endpoint ($p=2)$ case. In the variable coeficients case,
the situation is more difficult. Strichartz estimates without loss
for the Schr\"{o}dinger equation were obtained in several geometrical
situations where all the geodesics go to infinity - we say that the
manifold is \textit{nontrapping} - : \cite{BoucletTzvetkov}, \cite{Bouclet},
\cite{HassellTaoWunsch}, \cite{StaffTata}, in the case of manifold
without boundary; and in the case of a manifold with boundary, by
\cite{MR2672795} who obtained the estimates outside one convex obstacle.

Another tool for analyzing the behavior of the Schr\"{o}dinger flow is
the \textit{local smoothing effect}
\[
\Vert\chi e^{-it\Delta_{D}}u_{0}\Vert_{L^{2}(\mathbb{R},H^{1/2}(M))}\leq C\Vert u_{0}\Vert_{L^{2}}
\]
where $\chi$ is a smooth compactly supported function. The influence
of the geometry on the local smoothing effect is now fully understood:
\cite{MR2066943} showed that a necessary and sufficient condition
for this estimate to hold is the non-trapping condition. When trapped
geodesics exist, a loss in the smoothing effect has to occur.

But in the case of the Strichartz estimates, the situation is not
fully understood yet: \cite{MR2720226} showed Strichartz estimates
without loss for asymptotically euclidian manifolds without boundary
for which the trapped set is small enough and exhibit an hyperbolic
dynamics.

We go in the same direction and show global Strichartz estimates without
loss outside two convex obstacles, providing a first example of Strichartz
estimates without loss in a trapped setting for the problem with boundaries:
\begin{thm}
\label{thm:main}Let $\Theta_{1}$ and $\Theta_{2}$ be two smooth,
compact, strictly convex subsets of $\mathbb{R}^{3}$, and $\Omega=\mathbb{\mathbb{R}}^{3}\backslash(\Theta_{1}\cup\Theta_{2})$.
If $(p,q)$ verifies
\[
p>2,q\geq2,\ \frac{2}{p}+\frac{3}{q}=\frac{3}{2},
\]
then, the following estimate holds:
\begin{equation}
\Vert e^{-it\Delta_{D}}u_{0}\Vert_{L^{p}(\mathbb{R},L^{q}(\Omega))}\leq C\Vert u_{0}\Vert_{L^{2}}.\label{eq:goal}
\end{equation}
\end{thm}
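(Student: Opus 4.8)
The plan is to reduce \eqref{eq:goal} to a frequency-localized estimate that is uniform in the frequency, to prove that estimate on bounded time intervals by gluing the two model situations ``away from the obstacles'' and ``near a single obstacle'', and then to upgrade it to all times using Ikawa's analysis of the scattering by two strictly convex bodies. Concretely, by a Littlewood--Paley decomposition, the square-function characterisation of $L^{q}$ and Minkowski's inequality (valid since $p,q\geq2$), together with the almost-orthogonality of the dyadic pieces in $L^{2}$, it suffices to prove \eqref{eq:goal} for data spectrally truncated at frequency $|\xi|\sim2^{j}$, with a constant independent of $j$. When $2^{j}\lesssim1$ this is standard: in $\mathbb{R}^{3}$ the exterior Dirichlet resolvent is regular at zero energy, so low frequencies do not feel the trapping and one argues as in the non-trapping theory. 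For $h:=2^{-j}\to0$ one passes to the semiclassical operator $-h^{2}\Delta_{D}$ and fixes $\psi\in C_{c}^{\infty}((0,\infty))$ equal to $1$ near $1$; the remaining task is the bound
\[
\bigl\Vert e^{-it\Delta_{D}}\psi(-h^{2}\Delta_{D})u_{0}\bigr\Vert_{L^{p}(\mathbb{R},L^{q}(\Omega))}\leq C\Vert u_{0}\Vert_{L^{2}},\qquad C\ \text{independent of}\ h .
\]

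\emph{Bounded times.} On a fixed interval $[-T_{0},T_{0}]$ the estimate holds with no loss. Far from $\Theta_{1}\cup\Theta_{2}$ the evolution is the free one, for which the global estimates of \cite{Strichartz,GV85,KeelTao} apply; in a neighbourhood of a single obstacle one uses the parametrix and the Strichartz estimates outside one strictly convex obstacle of \cite{MR2672795}; and since in time $T_{0}$ only finitely many reflections can occur, a microlocal partition of unity together with the (essentially finite) speed of propagation of the associated semiclassical half-wave propagator glues these two models into a local-in-time Strichartz estimate without loss for $\Omega$.

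\emph{All times.} This is the heart of the matter. The portion of the solution that never meets $\Theta_{1}\cup\Theta_{2}$ again simply disperses freely; for the portion that does, one invokes the structure of the scattering by two strictly convex bodies of Ikawa \cite{Ikawa2,IkawaMult}. Near the obstacles the solution is a superposition $\sum_{J}u_{J}$ over finite reflection words $J=(i_{1},\dots,i_{k})$ with $i_{\ell}\in\{1,2\}$ and $i_{\ell}\neq i_{\ell+1}$ --- exactly two of each length --- and strict convexity produces a uniform contraction $\theta<1$ at every reflection, so that $u_{J}$ carries a weight $\lesssim\theta^{k}$. Equivalently, the cut-off resolvent $\chi(-h^{2}\Delta_{D}-z)^{-1}\chi$ continues holomorphically past the real axis into Ikawa's resonance-free strip with polynomial bounds, which yields an exponential decay $\Vert\chi\,e^{-it\Delta_{D}}\psi(-h^{2}\Delta_{D})\,\chi\Vert_{L^{2}\to L^{2}}\lesssim h^{-N}e^{-\gamma t}$ of the local energy. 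One then runs the scheme of \cite{MR2720226}: the frequency-$h^{-1}$ evolution is split into a piece microlocalised near the trapped ray and a piece microlocalised away from it; the latter sees a non-trapping dynamics and is handled by the non-trapping machinery (outgoing resolvent estimates of size $h^{-1}$ plus the bounded-time estimate above), while the former is controlled past the Ehrenfest-type time $\sim h\log(1/h)$ by the exponential decay just described --- large enough to absorb any logarithmic loss --- and on bounded times by the previous step. Summing over $J$ (a geometric series of bounded multiplicity) and then over the dyadic frequencies gives \eqref{eq:goal}.

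\emph{The main obstacle.} The delicate point is that these inputs have to be combined with \textbf{no loss of derivatives} and \textbf{uniformly in $h$}: one must show that the reflection words accumulating along the trapped ray contribute $\lesssim\theta^{k}$ to the Strichartz norm uniformly in $h$ and in the admissible pair $(p,q)$, which forces Ikawa's focusing estimates \cite{Ikawa2,IkawaMult} to be made semiclassically quantitative and to be meshed with the Dirichlet boundary parametrix of \cite{MR2672795} at every reflection --- in effect, an adaptation of \cite{MR2720226} to a problem with boundary. A secondary difficulty is the microlocal incoming/outgoing decomposition used to dispose of the escaping part of the solution: because the Schr\"{o}dinger flow propagates at infinite speed, ``escape to infinity'' must be quantified through non-stationary phase and positive-commutator propagation estimates rather than through a genuine domain of dependence.
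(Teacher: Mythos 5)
Your outline correctly identifies the three essential ingredients — Ivanovici's work outside one convex obstacle, the Burq–Guillarmou–Hassell logarithmic-time scheme, and Ikawa's analysis of the billiard between two convex bodies — and the overall architecture (Littlewood–Paley reduction, then a frequency-localised semiclassical estimate) matches the paper. But the account of how these pieces lock together is off at the two load-bearing joints, and the central technical step is asserted rather than proved.

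First, your ``bounded times'' step is not available as stated. You claim that on a fixed interval $[-T_0,T_0]$ one glues the free model and the one-obstacle model of \cite{MR2672795} via a microlocal partition of unity and finite speed of propagation. That gluing produces a local-in-time estimate only \emph{away} from the periodic ray: a wave packet launched along $\mathcal{R}$ bounces between the two obstacles and never separates into ``one obstacle at a time.'' Moreover the relevant time scale after the semiclassical rescaling is not a fixed $T_0$ but $\epsilon|\log h|$, which is unbounded. The paper's Sections 3--5 are entirely devoted to this step: one must (i) reduce, by propagation and the H\"{o}lder regularity of the billiard flow (Lemmas \ref{lem:hold12}--\ref{lem:distsup}), to data microlocally supported on the trapped set $\hat{\mathcal{T}}_{2\epsilon|\log h|}(D)$, and then (ii) build an explicit Ikawa-type parametrix indexed by reflection words $J$, whose amplitudes decay like $\lambda^{|J|}$ through the Gaussian-curvature estimate of Proposition \ref{prop:convL}, and prove a dispersive bound $|\chi_+ e^{-ith\Delta}\delta^y|\lesssim (ht)^{-3/2}$ up to time $\epsilon|\log h|$ via stationary phase (Lemma \ref{lem:nondeg}). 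None of this is a corollary of \cite{MR2672795}; it is the new content of the paper. Your final paragraph candidly flags this as the ``main obstacle,'' but that acknowledgement contradicts the preceding assertion that the bounded-time estimate ``holds with no loss.''

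Second, the role of the exponential decay is garbled. You say the trapped-ray piece is ``controlled \emph{past} the Ehrenfest-type time $\sim h\log(1/h)$ by the exponential decay.'' In the actual argument, the Ikawa resolvent/local-energy estimate enters only through the smoothing effect with a $\sqrt{h|\log h|}$ loss (equation \eqref{eq:smooth_logloss}); this loss is then \emph{exactly compensated} by proving Strichartz \emph{on} intervals of length $h|\log h|$ (Proposition \ref{prop:semilog}), through the time-slicing and Christ--Kiselev argument of \cite{MR2720226}. Polynomial resolvent bounds in a resonance-free strip would, by themselves, cost a power of $h$; they do not ``absorb'' a logarithmic loss. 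Where a genuine exponential decay \emph{does} appear is in the $\lambda^{|J|}$ contraction of the parametrix amplitudes — but that is a statement about the geometry of reflections, not about the cut-off resolvent, and it is used inside the dispersive estimate, not to handle ``large'' times separately. Conflating these two exponential mechanisms leaves the crucial step — the dispersive estimate on logarithmic time scales for data trapped along the ray — without a proof.
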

Strictly convex is here understood in the sense of \cite{MR2672795},
that is with a second fondamental form bounded below.

The behavior of the waves equation outside several convex obstacles
was first investigated by \cite{Ikawa2,IkawaMult}, who showed local
energy decay. Burq used in \cite{plaques} some of his results to
show a control property on an equation closely related to the Schr\"{o}dinger
equation outside several convex bodies.

For data of frequencies $\sim h^{-1}$, if we are able to prove Strichartz
estimates in times $h$
\[
\Vert e^{-it\Delta_{D}}u_{0}\Vert_{L^{p}(0,h)L^{q}(\Omega)}\leq C\Vert u_{0}\Vert_{L^{2}},
\]
then the smoothing estimate provides the Strichartz estimates. But
in our framework, a logarithmic loss occur in the estimate, due to
the presence of a trapped ray:
\[
\Vert\chi e^{-it\Delta_{D}}u_{0}\Vert_{L^{2}(\mathbb{R},H^{1/2}(M))}\leq C|\ln h|^{\frac{1}{2}}\Vert u_{0}\Vert_{L^{2}}.
\]
However, \cite{MR2720226} remarked that this logarithmic loss can
be compensated if we show Strichartz estimates up to times $h|\log h|$.
Following their idea and arguments of \cite{MR2672795}, we first
show that we can restrict ourselves to prove the estimate on a neighborhood
of the periodic ray and in times $h\log h$, for datas of frequencies
$\sim h^{-1}$. Then, we reduce again the problem, to data which micro-locally
contain only points of the tangent space which do not escape a given
neighborhood of the periodic ray after logarithmic times. Finally,
following ideas of \cite{Ikawa2,IkawaMult}, \cite{plaques}, we construct
an approximate solution for such data, and we show that this approximation
gives the desired estimate.
\begin{rem}
We restrict here ourselves to the three dimensional case because we
use results of \cite{Ikawa2,IkawaMult} and \cite{plaques} who are
stated in dimension three. The other parts of the proof hold in any
dimension without modification. Because the above mentioned results
should be extended without difficulty to the arbitrary dimensional
case, our proof should work in any dimension.
\end{rem}
\begin{rem}
We do not obtain the endpoint case $(p,q)=(2,6)$ because we make
use of the Christ-Kiselev lemma. However, this restriction should
be avoided with a more careful analysis.
\end{rem}

\subsection{Notations}

We will use the following notations and conventions:
\begin{lyxlist}{00.00.0000}
\item [{$\Theta_{1},\Theta_{2}$}] will be two smooth, compact, strictly
convex subsets of $\mathbb{R}^{3}$, called the obstacles,
\item [{$X^{i}(x,p)$}] the $i$'th point of intersection of the reflected
ray starting at $x$ in the direction $p$ with the obstacles,
\item [{$\Xi^{i}(x,p)$}] the direction of the reflected ray starting at
$x$ in the direction $p$ after $i$ reflections,
\item [{$(X_{t}(x,p),\Xi_{t}(x,p))$}] the point and the direction attained
by following the reflected ray starting at $x$ in the direction $p$
during a time $t$ at speed $|p|$,
\item [{$\varPhi_{t}(x,p)$}] the billiard flow, $\varPhi_{t}(x,p)=(X_{t}(x,p),\Xi_{t}(x,p))$. 
\end{lyxlist}
Moreover, we refer to \textit{the periodic ray} or \textit{the trapped
ray, }denoted $\mathcal{R}$, as the segment joining $\Theta_{1}$
and $\Theta_{2}$ that is the unique periodic trajectory of $\mathbb{R}^{3}\backslash\left(\Theta_{1}\cup\Theta_{2}\right).$

\begin{figure}

\includegraphics[scale=0.25]{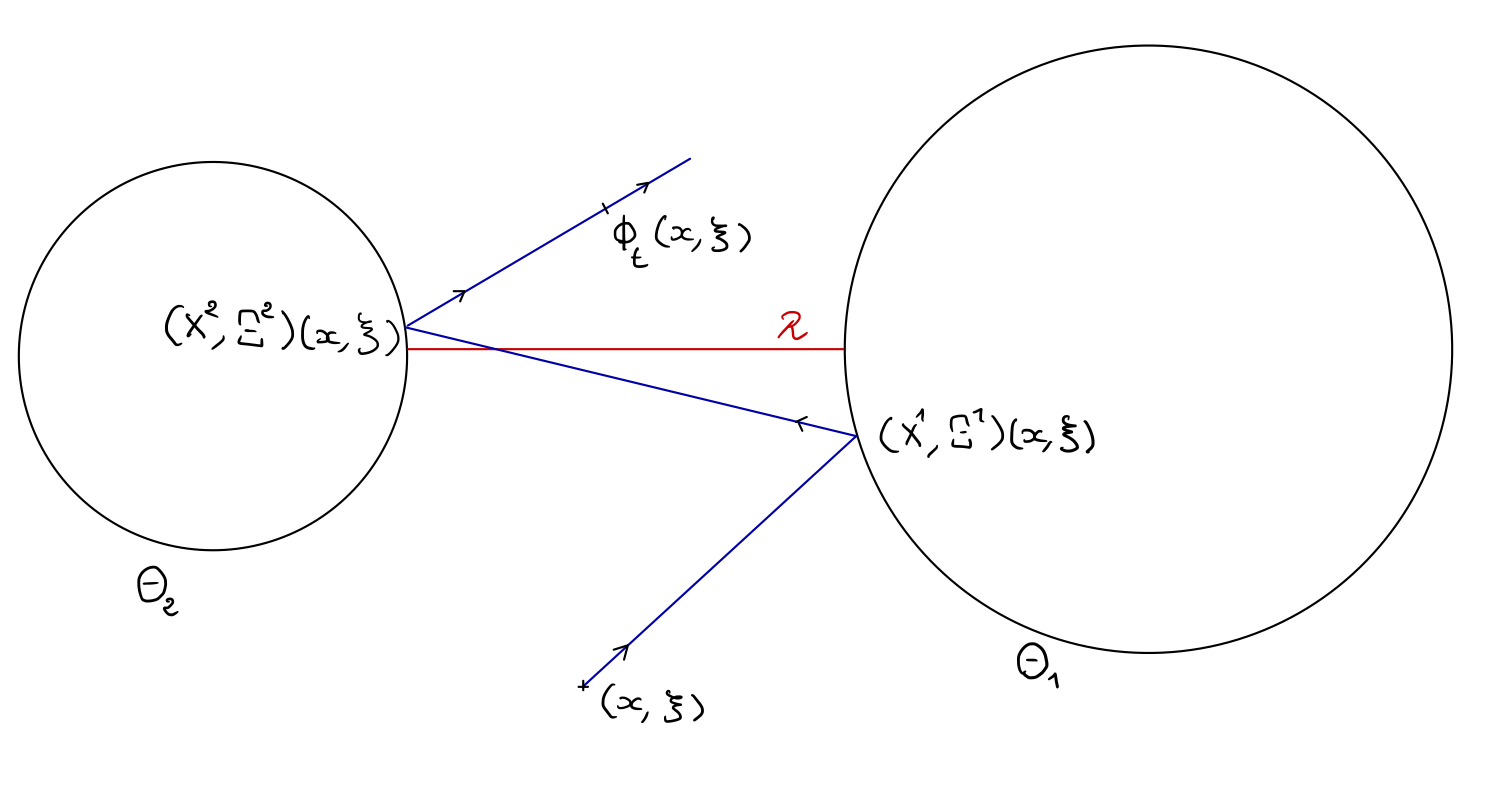}\caption{Notations}

\end{figure}

Let $h>0$ be a fixed small parameter. We fix $\alpha_{0},\beta_{0}>0$
and a smooth funtion $\psi\in C_{0}^{\infty}$ supported in $[\alpha_{0},\beta_{0}]$.
For $h>0$ and $u\in L^{2}$, $\psi(-h^{2}\Delta)u$ is the localisation
of $u$ in frequencies of sizes $[\alpha_{0}h^{-1},\beta_{0}h^{-1}]$.
We refer to \cite{MR2672795} for the definition of $\psi(-h^{2}\Delta)$. 

Finally, for $a\in C^{\infty}(\mathbb{R}^{n}\times\mathbb{R}^{n})$,
we will design by $\text{Op}(a)$ the microlocal operator with symbol
$a$ operating at scale $h$, defined for $u\in L^{2}(\mathbb{R}^{n})$
by
\[
(\text{Op}(a)u)(x)=\frac{1}{(2\pi h)^{n}}\int\int e^{-i(x-y)\cdot\xi/h}a(x,\xi)u(y)d\xi dy.
\]
We refer to \cite{semibook} for classical properties of these operators.

\section{Reduction to logarithmic times near the periodic ray}

The aim of this section is to show that the following proposition
implies \thmref{main}:
\begin{prop}[Semiclassical Strichartz estimates on a logarithmic interval near
the periodic ray]
\label{thm:main-1}\label{prop:semilog}There exists $\epsilon>0$
such that for any $\chi\in C_{0}^{\infty}$ vanishing outside a small
enough neighborhood of the trapped ray, we have
\begin{equation}
\Vert\chi e^{-it\Delta_{D}}\psi(-h^{2}\Delta)u_{0}\Vert_{L^{p}(0,\epsilon h|\log h|)L^{q}(\Omega)}\leq C\Vert u_{0}\Vert_{L^{2}}.\label{eq:butult}
\end{equation}
\end{prop}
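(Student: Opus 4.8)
\emph{Sketch of the argument.} Setting $P=-h^{2}\Delta_{D}$ and rescaling time via $t=hs$, estimate \eqref{eq:butult} is equivalent to the semiclassical Strichartz bound
\[
\Vert\chi\,e^{-i\frac{s}{h}P}\psi(P)u_{0}\Vert_{L^{p}(0,\epsilon|\log h|)L^{q}(\Omega)}\leq C\,h^{-1/p}\Vert u_{0}\Vert_{L^{2}},
\]
i.e.\ to a Strichartz estimate for the semiclassical Schr\"{o}dinger flow over a time interval of logarithmic length --- the Ehrenfest regime. In the free, or merely non-trapping, situation such a bound follows from the dispersive estimate $\Vert e^{-i\frac{s}{h}P}\Vert_{L^{1}\to L^{\infty}}\lesssim(h|s|)^{-3/2}$ and the usual $TT^{*}$/Hardy--Littlewood--Sobolev argument, \emph{uniformly} in the length of the interval; so the whole difficulty is the trapped ray $\mathcal{R}$, and the plan is to isolate the part of the data responsible for it and treat it by an explicit parametrix following \cite{Ikawa2,IkawaMult,plaques}.

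\emph{Microlocal reductions.} Since $e^{-i\frac{s}{h}P}$ has finite semiclassical speed of propagation, $\chi\, e^{-i\frac{s}{h}P}\psi(P)\text{Op}(a)u_{0}$ is $O(h^{\infty})\Vert u_{0}\Vert_{L^{2}}$ in $L^{\infty}(0,\epsilon|\log h|)L^{2}$ whenever $a$ is supported away from $\bigcup_{0\le s\le\epsilon|\log h|}\varPhi_{-s}(\operatorname{supp}\chi)$. Splitting $u_{0}$ accordingly, the piece supported away from a fixed small neighbourhood $W$ of $\mathcal{R}$, together with the piece that leaves $W$ and does not re-enter within time $\epsilon|\log h|$, only sees on the relevant time scale the billiard flow outside a single strictly convex body, and is therefore controlled by the estimate of \cite{MR2672795} combined with an Egorov argument valid up to times $\epsilon|\log h|$ for $\epsilon$ small. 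Finally, since $\mathcal{R}$ is a hyperbolic periodic orbit of the billiard flow, the set of points remaining in $W$ for all $|s|\le\epsilon|\log h|$ lies in an $O(h^{\gamma\epsilon})$-tube around $\mathcal{R}$, with $\gamma>0$ the Lyapunov rate; one may thus reduce to $u_{0}=\text{Op}(b)u_{0}$, with $b$ supported in this non-escaping tube.

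\emph{The approximate solution.} For data microlocalized on the non-escaping tube one builds, as in Ikawa's parametrix outside two convex obstacles \cite{Ikawa2,IkawaMult} and its use in \cite{plaques}, an approximation
\[
e^{-i\frac{s}{h}P}\text{Op}(b)u_{0}=\sum_{0\le n\le N}U_{n}(s)+R(s),\qquad N\sim\epsilon|\log h|,
\]
where $U_{n}$ gathers the trajectories that have undergone $n$ reflections, alternately off $\Theta_{1}$ and $\Theta_{2}$: it is a semiclassical oscillatory integral whose phase solves the $n$-fold reflected eikonal equation along $\varPhi_{s}$ and whose amplitudes solve the associated transport equations. Strict convexity forces each reflected wavefront to become strictly more curved, with no caustic ever forming, so the principal amplitude of $U_{n}$ carries a contraction factor bounded by $\lambda^{n}$, where $\lambda=\lambda(\Theta_{1},\Theta_{2})<1$ depends only on the curvatures and the inter-obstacle distance, uniformly for $n\le N$; the sum is finite because at most $N\sim\epsilon|\log h|$ reflections can occur before time $\epsilon|\log h|$, and the accumulated remainder $R$ is $O(h^{\infty})$ on our interval. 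The hard part will be precisely this step --- propagating the parametrix through logarithmically many reflections while keeping the amplitude bounds, the phase non-degeneracy, and the boundary and near-grazing remainder estimates uniform in $n$ and $h$; it is here that the hyperbolicity of $\mathcal{R}$ enters, and it is what compensates the $|\log h|^{1/2}$ loss of the smoothing estimate mentioned in the introduction.

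\emph{Conclusion.} Each $U_{n}$ is an FIO whose underlying canonical transformation, restricted to any unit subinterval of $(0,\epsilon|\log h|)$, is non-degenerate with two-sided bounds on its Jacobian; feeding the dispersive decay of $U_{n}$ (which carries the factor $\lambda^{2n}$ coming from the squared amplitude) into the abstract Keel--Tao scheme then yields $\Vert U_{n}\Vert_{L^{p}(0,\epsilon|\log h|)L^{q}(\Omega)}\lesssim\lambda^{n}h^{-1/p}\Vert b_{n}u_{0}\Vert_{L^{2}}$, with $b_{n}$ the microlocal cutoff producing $U_{n}$ and $\sum_{n}\Vert b_{n}u_{0}\Vert_{L^{2}}^{2}\lesssim\Vert u_{0}\Vert_{L^{2}}^{2}$. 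Summing and using Cauchy--Schwarz, $\sum_{n\le N}\Vert U_{n}\Vert_{L^{p}L^{q}}\lesssim h^{-1/p}\bigl(\sum_{n}\lambda^{2n}\bigr)^{1/2}\Vert u_{0}\Vert_{L^{2}}\lesssim h^{-1/p}\Vert u_{0}\Vert_{L^{2}}$, which after undoing the time rescaling (the factor $h^{1/p}$) gives \eqref{eq:butult}.
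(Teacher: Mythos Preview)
Your overall architecture matches the paper's: reduce microlocally to data trapped near $\mathcal{R}$ for logarithmic time, build an Ikawa-type parametrix indexed by reflection histories, and exploit the exponential gain $\lambda^{n}$ coming from the product of Gaussian curvatures along the reflected fronts. The paper, however, does not conclude the way you do. After the reduction it proves a \emph{pointwise dispersive estimate}
\[
\bigl|\chi_{+}\,e^{-ish\Delta_{D}}\delta^{y}_{\epsilon,h}\bigr|\lesssim (hs)^{-3/2},\qquad 0\le s\le\epsilon|\log h|,
\]
by performing the stationary phase in $\xi$ on each reflected term and summing the $\lambda^{|J|}$ (only $|J|\sim s$ contribute at time $s$); Strichartz then follows from a single application of $TT^{\star}$. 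This avoids running Keel--Tao separately on each $U_{n}$, and in particular sidesteps the need for uniform-in-$n$ FIO bounds at the Strichartz level.

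Two concrete points in your sketch deserve correction. First, in the conclusion you invoke an almost-orthogonality $\sum_{n}\Vert b_{n}u_{0}\Vert_{L^{2}}^{2}\lesssim\Vert u_{0}\Vert_{L^{2}}^{2}$ with ``$b_{n}$ the microlocal cutoff producing $U_{n}$''. There is no such decomposition: all the reflected pieces $U_{n}$ come from the \emph{same} microlocalized data; the index $n$ labels reflections of one wave, not orthogonal slices of $u_{0}$. Fortunately this is unnecessary---direct summation $\sum_{n}\lambda^{n}<\infty$ already closes. Second, in the microlocal reduction you say the piece that leaves $W$ ``only sees the billiard flow outside a single strictly convex body'' and is handled by \cite{MR2672795}. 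That is not the mechanism: such trajectories may well hit both obstacles. The paper's argument is that data microlocally outside the trapped set $\hat{\mathcal T}_{2\epsilon|\log h|}(D)$ contribute $O(h^{\infty})$ to $\chi e^{-ish\Delta_{D}}u_{0}$ by propagation of the $b$-wavefront set (no Strichartz for one obstacle is used at this stage). A related technical point you pass over is that the cutoff onto the trapped set has derivatives of size $h^{-c|\alpha|\epsilon}$, and all the amplitude and phase derivatives in the parametrix grow like $D^{|J|}\sim h^{-c'\epsilon}$; the paper absorbs these losses by choosing $\epsilon$ small, and this is precisely what fixes the admissible logarithmic length.
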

In the rest of this section, we will assume that \propref{semilog}
holds with $\epsilon=1$ and prove \thmref{main}. The proof for other
values of $\epsilon>0$ is the same.

Let $\chi_{\text{obst}},\chi_{\text{ray}}\in C_{0}^{\infty}$ be such
that $\chi_{\text{obst}}=1$ in a neighborhood of $\Theta_{1}\cup\Theta_{2}\cup\mathcal{R}$,
and $\chi_{\text{ray}}\in C_{0}^{\infty}$ such that $\chi_{\text{ray}}=1$
in a neighborhood of $\mathcal{R}$, and let $\tilde{\psi}$ such
that $\tilde{\psi}=1$ on the support of $\psi$. We decompose $\psi(-h^{2}\Delta)e^{it\Delta_{D}}u_{0}$
in a sum of three terms
\begin{multline}
\psi(-h^{2}\Delta)e^{it\Delta_{D}}u_{0}=\tilde{\psi}(-h^{2}\Delta)(1-\chi_{\text{obst}})\psi(-h^{2}\Delta)e^{it\Delta_{D}}u_{0}\\
+\tilde{\psi}(-h^{2}\Delta)\chi_{\text{obst}}(1-\chi_{\text{ray}})\psi(-h^{2}\Delta)e^{it\Delta_{D}}u_{0}\\
+\tilde{\psi}(-h^{2}\Delta)\chi_{\text{obst}}\chi_{\text{ray}}\psi(-h^{2}\Delta)e^{it\Delta_{D}}u_{0}.\label{eq:dec-red}
\end{multline}

We can deal with the first two terms like in \cite{MR2672795} if
we show a smoothing effect \textit{without loss} for $\chi$ equal
to $0$ near the trapped ray. The third term can be handled with the
method of \cite{MR2720226}: the smoothing effect \textit{with logarithmic
loss} obtained in \cite{MR2066943} combined with Proposition \propref{semilog}
and the smoothing effect in the non trapping region is sufficient
to show Strichartz estimates without loss.

Let us first show
\begin{prop}[Local smoothing without loss in the non trapping region]
\label{prop:smooth_wo}For any $\chi\in C_{0}^{\infty}$ vanishing
in a neighborhood of the periodic ray, we have
\begin{equation}
\Vert\chi e^{-it\Delta_{D}}u_{0}\Vert_{L^{2}(\mathbb{R},H^{1/2}(\Omega))}\lesssim\Vert u_{0}\Vert_{L^{2}}.\label{eq:lswlnt}
\end{equation}
\end{prop}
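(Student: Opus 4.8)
The plan is to reduce the local smoothing estimate \eqref{eq:lswlnt} to a known resolvent estimate for the Dirichlet Laplacian on $\Omega$ via a standard $TT^*$ / Kato theory argument, and then to supply the required resolvent bound away from the trapped ray using the fact that the only trapped trajectory of the exterior of two strictly convex obstacles is the periodic ray $\mathcal{R}$. Concretely, by the usual characterization of local smoothing, \eqref{eq:lswlnt} is equivalent to the uniform bound
\[
\sup_{\lambda\in\mathbb{R},\,\varepsilon>0}\bigl\Vert \chi (-\Delta_D-(\lambda+i\varepsilon))^{-1}\chi\bigr\Vert_{L^2(\Omega)\to L^2(\Omega)}\lesssim \langle\lambda\rangle^{-1/2},
\]
or more precisely the corresponding estimate with $H^{1/2}$ on the left and a weight on the right, obtained by writing $e^{-it\Delta_D}$ through the spectral theorem and using Plancherel in $t$. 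So the real content is a cutoff resolvent estimate.

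First I would treat the low-frequency part $\lambda \lesssim 1$: here there is no trapping issue, and the bound follows from the limiting absorption principle for the Dirichlet Laplacian on an exterior domain in $\mathbb{R}^3$ together with the absence of eigenvalues and resonances at zero (the obstacles being nontrivial compact sets with smooth boundary), exactly as in the nontrapping single-obstacle case treated in \cite{MR2672795}. Second, and this is where the geometry of two convex obstacles enters, I would handle the high-frequency regime $\lambda = h^{-2}\to\infty$. The key point is that since $\chi$ vanishes near $\mathcal{R}$, the wavefront set of $\chi\,u$ avoids the trapped set of the billiard flow $\varPhi_t$; every backward bicharacteristic through $\operatorname{supp}\chi$ escapes to infinity (it may reflect off the obstacles finitely many times, but, not lying on $\mathcal{R}$, it cannot be trapped — this is precisely the classical fact underlying the Ikawa-type analysis and is the dynamical input of \cite{Ikawa2,IkawaMult}). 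One then invokes the now-standard principle that a cutoff resolvent localized away from the trapped set satisfies the nontrapping bound $O(h)$ in $L^2\to L^2$ (equivalently $O(\langle\lambda\rangle^{-1/2})$ after rescaling), via propagation of singularities / positive commutator estimates for the semiclassical operator $h^2\Delta_D+ \lambda h^2$, using Dirichlet boundary conditions and the strict convexity of the obstacles to control the reflected rays (glancing rays are controlled by the Melrose--Taylor / Melrose--Sj\"ostrand propagation theorem, which applies since the boundaries are strictly convex). This is the same mechanism used to prove semiclassical local smoothing in the nontrapping exterior problem, localized here to the complement of a neighborhood of $\mathcal{R}$.

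The main obstacle is making the high-frequency cutoff resolvent estimate rigorous in the presence of the boundary: one needs an escape/propagation argument that (i) respects the Dirichlet condition on $\partial\Omega$, (ii) handles the reflections off the strictly convex obstacles, and (iii) controls glancing directions. I would organize this by choosing the support of $\chi$ small enough that any bicharacteristic starting there that does not eventually hit $\mathcal R$ leaves the support of $\chi$ in a time bounded independently of $h$; then a standard semiclassical commutator argument with an escape function built along the (non-trapped) billiard flow gives the $O(h)$ bound. An alternative, if one prefers to cite rather than reprove, is to deduce the cutoff estimate directly from the non-trapping-away-from-$\mathcal{R}$ structure using the abstract results on resolvent estimates in terms of the trapped set; either way, once the resolvent bound is in hand, a $TT^*$ argument combined with the Christ--Kiselev lemma (as already invoked elsewhere in the paper) and interpolation/Plancherel in time upgrades it to the stated $L^2_tH^{1/2}_x$ smoothing estimate \eqref{eq:lswlnt}, completing the proof.
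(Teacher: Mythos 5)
Your overall skeleton agrees with the paper's: both reduce the smoothing estimate to a cutoff resolvent bound $\Vert\chi(-\Delta_{D,\Omega}-(\lambda\pm i\epsilon)^2)^{-1}\chi\Vert_{L^2\to L^2}\lesssim|\lambda|^{-1}$ and then upgrade to $L^2_t H^{1/2}_x$ by the standard spectral/Plancherel argument (the paper cites \cite{MR2068304} for that step). The difference is in how the resolvent bound is obtained. You propose a direct semiclassical escape-function / positive-commutator argument on $\Omega$ itself, supplemented by Melrose--Sj\"{o}strand propagation near glancing, or alternatively citing abstract cutoff-resolvent-away-from-trapping results. The paper instead uses a geometric trick: it introduces a smooth, \emph{connected}, \emph{non-trapping} obstacle $\mathcal{K}$ that coincides with $\Theta_1\cup\Theta_2$ away from a neighborhood of $\mathcal{R}$ on which $\chi$ vanishes (in effect filling in the gap between the two obstacles along the trapped ray). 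On $\tilde\Omega=\mathbb{R}^n\setminus\mathcal{K}$ the cutoff resolvent estimate is a known black box for non-trapping exterior problems, and since $\Delta_{D,\tilde\Omega}$ and $\Delta_{D,\Omega}$ coincide on $\operatorname{supp}\chi$, the estimate transfers verbatim to $\Omega$, with no new propagation analysis required.

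Your primary route has a subtlety worth flagging: since $\Omega$ is trapping, a bicharacteristic starting in $\operatorname{supp}\chi$ may leave $\operatorname{supp}\chi$ quickly, linger arbitrarily long near $\mathcal{R}$ (outside $\operatorname{supp}\chi$), and then re-enter $\operatorname{supp}\chi$. Uniform escape time from $\operatorname{supp}\chi$ is therefore not the right notion, and a globally monotone escape function of the kind used for non-trapping exterior domains does not exist here. What rescues the approach is precisely the non-elementary cutoff-away-from-the-trapped-set resolvent estimates (Cardoso--Vodev, Burq) that you mention as a fallback; as your primary argument stands, it implicitly treats the exterior of two obstacles as if it were non-trapping on $\operatorname{supp}\chi$, which it is not. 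The paper's replacement-obstacle construction sidesteps this issue entirely and is the cleaner route.
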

\begin{proof}
Let $\mathcal{K}$ be a smooth, connected, non trapping obstacle,
coinciding with $\Theta_{1}\cup\Theta_{2}$ outside a neighborhood
of the periodic ray on which $\chi$ is vanishing (\figref{K}). Note
that, in particular, $\mathcal{K}$ coincides with $\Theta_{1}\cup\Theta_{2}$
on the support of $\chi$. We set $\tilde{\Omega}=\mathbb{R}^{n}\backslash\mathcal{K}$.
In particular, $\Delta_{D,\tilde{\Omega}}$ and $\Delta_{D,\Omega}$
coincides on the support of $\chi$. As $\mathcal{K}$ is non trapping,
we have by the work of \cite{MR1764368} and \cite{MR0492794,MR644020}
for the high frequencies part, \cite{MR1618254} for the low frequencies
\[
\Vert\chi(-\Delta_{D,\tilde{\Omega}}-(\lambda\pm\epsilon)^{2})^{-1}\chi\Vert_{L^{2}(\tilde{\Omega})\rightarrow L^{2}(\tilde{\Omega})}\leq C|\lambda|^{-1}.
\]
Now, if $u\in L^{2}(\Omega)$, we have 
\begin{multline*}
\Vert\chi(-\Delta_{D,\Omega}-(\lambda\pm\epsilon)^{2})^{-1}\chi u\Vert_{L^{2}(\Omega)}=\Vert\chi(-\Delta_{D,\tilde{\Omega}}-(\lambda\pm\epsilon)^{2})^{-1}\chi u\Vert_{L^{2}(\tilde{\Omega})}\\
\leq C\Vert u\Vert_{L^{2}(\tilde{\Omega})}|\lambda|^{-1}\leq C\Vert u\Vert_{L^{2}(\Omega)}|\lambda|^{-1}.
\end{multline*}
So, the following resolvent estimate holds
\[
\Vert\chi(-\Delta_{D,\Omega}-(\lambda\pm\epsilon)^{2})^{-1}\chi\Vert_{L^{2}(\Omega)\rightarrow L^{2}(\Omega)}\leq C|\lambda|^{-1}.
\]
And we can deduce the smoothing estimate (\ref{eq:lswlnt}) exactly
like in \cite{MR2068304}.
\end{proof}
\begin{figure}

\includegraphics[scale=0.2]{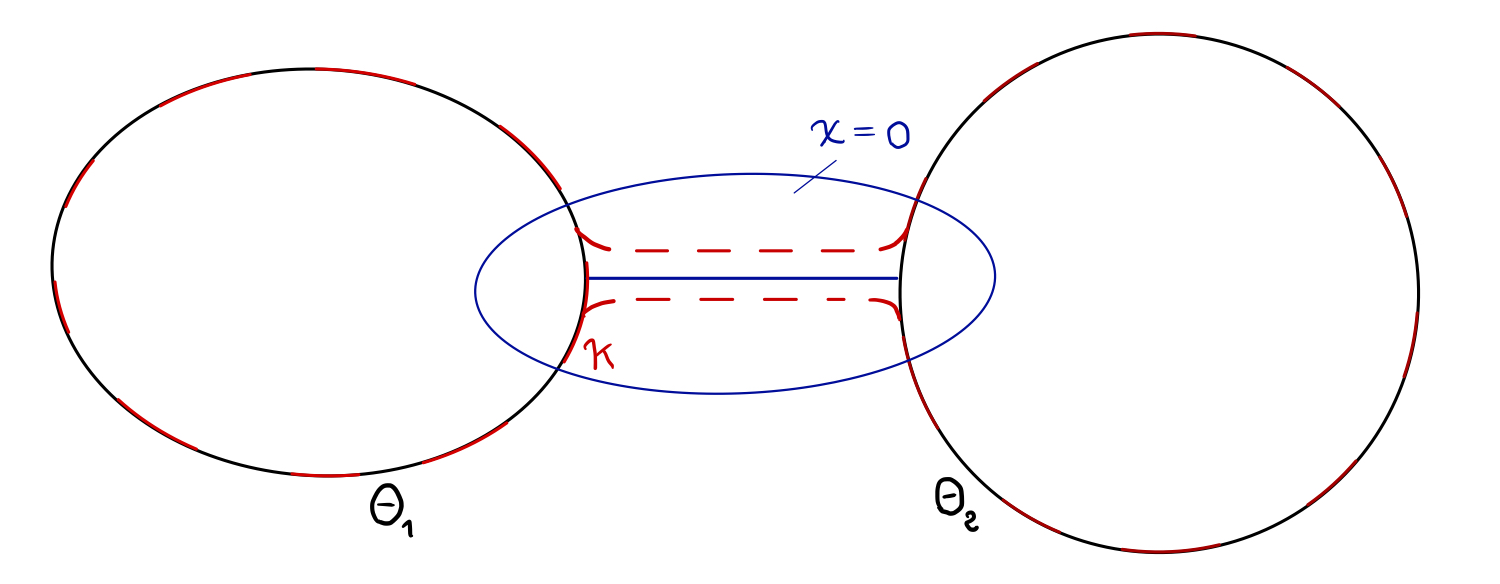}\caption{\label{fig:K}}

\end{figure}

\subsection{The first two terms: away from the periodic ray}

In this section, we deal with the first two terms of (\ref{eq:dec-red}),
following \cite{MR2672795}.

\subsubsection{The first term}

We set $w_{h}(x,t)=(1-\chi_{\text{obst}})\psi(-h^{2}\Delta)e^{it\Delta_{D}}u_{0}$,
who satisfies
\[
\begin{cases}
i\partial_{t}w_{h}+\Delta_{D}w_{h}= & -[\chi_{\text{obst}},\Delta_{D}]\psi(-h^{2}\Delta)e^{it\Delta_{D}}u_{0}\\
w_{h}(t=0)= & (1-\chi_{\text{obst}})\psi(-h^{2}\Delta)u_{0}
\end{cases}
\]
By the arguments of \cite{MR2672795}, as $\chi=1$ near $\partial\Omega$,
$w_{h}$ solve a problem in the full space, and the Duhamel formula
combined with the Strichartz estimates for the usual Laplacian on
$\mathbb{R}^{n}$ and the Christ-Kiselev lemma gives the estimate
\[
\Vert w_{h}\Vert_{L^{p}L^{q}}\lesssim\Vert(1-\chi_{\text{obst}})\psi(-h^{2}\Delta)e^{it\Delta_{D}}u_{0}\Vert_{L^{2}(\mathbb{R}^{n})}+\Vert[\chi_{\text{obst}},\Delta_{D}]\psi(-h^{2}\Delta)e^{it\Delta_{D}}u_{0}\Vert_{L^{2}H^{-1/2}}.
\]
Note that $[\chi,\Delta_{D}]$ is supported in a compact set and outside
some neighborhood of $\Theta_{1}\cup\Theta_{2}\cup\mathcal{R}$. So,
there exists $\tilde{\chi}\in C_{0}^{\infty}$ such that $\tilde{\chi}=1$
on the support of $[\chi_{\text{obst}},\Delta_{D}]$ and vanishing
in a neighborhood of $\mathcal{R}$. Then, by the smoothing estimate
without loss in the non trapping region (\ref{eq:lswlnt}), we get
\begin{eqnarray*}
\Vert[\chi_{\text{obst}},\Delta_{D}]\psi(-h^{2}\Delta)e^{it\Delta_{D}}u_{0}\Vert_{L^{2}H^{-1/2}} & = & \Vert[\chi_{\text{obst}},\Delta_{D}]\tilde{\chi}\psi(-h^{2}\Delta)e^{it\Delta_{D}}u_{0}\Vert_{L^{2}H^{-1/2}}\\
 & \leq & \Vert\tilde{\chi}\psi(-h^{2}\Delta)e^{it\Delta_{D}}u_{0}\Vert_{L^{2}H^{1/2}}\\
 & \lesssim & \Vert\psi(-h^{2}\Delta)u_{0}\Vert_{L^{2}}.
\end{eqnarray*}
Finally, the estimate 
\[
\Vert\tilde{\psi}(-h^{2}\Delta)w_{h}\Vert_{L^{r}}\leq\Vert w_{h}\Vert_{L^{r}}
\]
of \cite{Square} permits as in \cite{MR2672795} to conclude that
\[
\Vert\tilde{\psi}(-h^{2}\Delta)(1-\chi_{\text{obst}})\psi(-h^{2}\Delta)e^{it\Delta_{D}}u_{0}\Vert_{L^{p}L^{q}}\leq\Vert\psi(-h^{2}\Delta)e^{it\Delta_{D}}u_{0}\Vert_{L^{2}}.
\]

\subsubsection{The second term}

Following again \cite{MR2672795}, we localise in space in the following
way: let $\varphi\in C_{0}^{\infty}(-1,2)$ equal to one on $[0,1]$,
and for $l\in\mathbb{Z}$
\begin{eqnarray*}
v_{h,l} & = & \varphi(t/h-l)\chi_{\text{obst}}(1-\chi_{\text{ray}})\psi(-h^{2}\Delta)e^{it\Delta_{D}}u_{0}\\
V_{h,l} & = & \left(\varphi(t/h-l)A+i\frac{\varphi'(t/h-l)}{h}\chi_{\text{obst}}(1-\chi_{\text{ray}})\right)\psi(-h^{2}\Delta)e^{it\Delta_{D}}u_{0}
\end{eqnarray*}
where
\[
A=\Delta\chi_{\text{obst}}(1-\chi_{\text{ray}})-\chi_{\text{obst}}\Delta\chi_{\text{ray}}-\nabla\chi_{\text{ray}}\cdot\nabla\chi_{\text{obst}}+(1-\chi_{\text{ray}})\nabla\chi_{\text{obst}}\cdot\nabla-\chi_{\text{obst}}\nabla\chi_{\text{ray}}\cdot\nabla.
\]
 These quantities verify
\[
\begin{cases}
i\partial_{t}v_{h,l}+\Delta_{D}v_{h,l}= & V_{h,l}\\
v_{h,l|hl+2h>t>hl-h}= & 0
\end{cases}
\]
Note that the support of $A$ is included in that of $\chi_{\text{obst}}(1-\chi_{\text{ray}})$.
Let $Q\subset\mathbb{R}^{3}$ be a cube sufficiently large to contain
$\Theta_{1}\cup\Theta_{2}$. We denote by $S$ the punctured torus
obtained by removing $\Theta_{1}\cup\Theta_{2}$ of $Q$. Now, let
$\tilde{\chi}\in C_{0}^{\infty}$ be equal to one on the support of
$\chi_{\text{obst}}(1-\chi_{\text{ray}})$, supported outside a neighborhood
of $\mathcal{R}$ and in a neighborhood of $\partial\Omega$ such
that $\Delta_{D}$ and $\Delta_{S}$ coincides on its support (\figref{S}).
Writing $v_{h,l}=\tilde{\chi}v_{h,l}$ and $V_{h,l}=\tilde{\chi}V_{h,l}$,
the rest of the proof follow as in \cite{MR2672795} considering the
problem in $S$, using our smoothing estimate without loss outside
the trapping region (\ref{eq:lswlnt}) instead of the smoothing estimate
of \cite{MR2066943}, and we obtain
\[
\Vert\tilde{\psi}(-h^{2}\Delta)\chi_{\text{obst}}(1-\chi_{\text{ray}})\psi(-h^{2}\Delta)e^{it\Delta_{D}}u_{0}\Vert_{L^{p}L^{q}}\leq\Vert\psi(-h^{2}\Delta)e^{it\Delta_{D}}u_{0}\Vert_{L^{2}}.
\]

\begin{figure}
\includegraphics[scale=0.2]{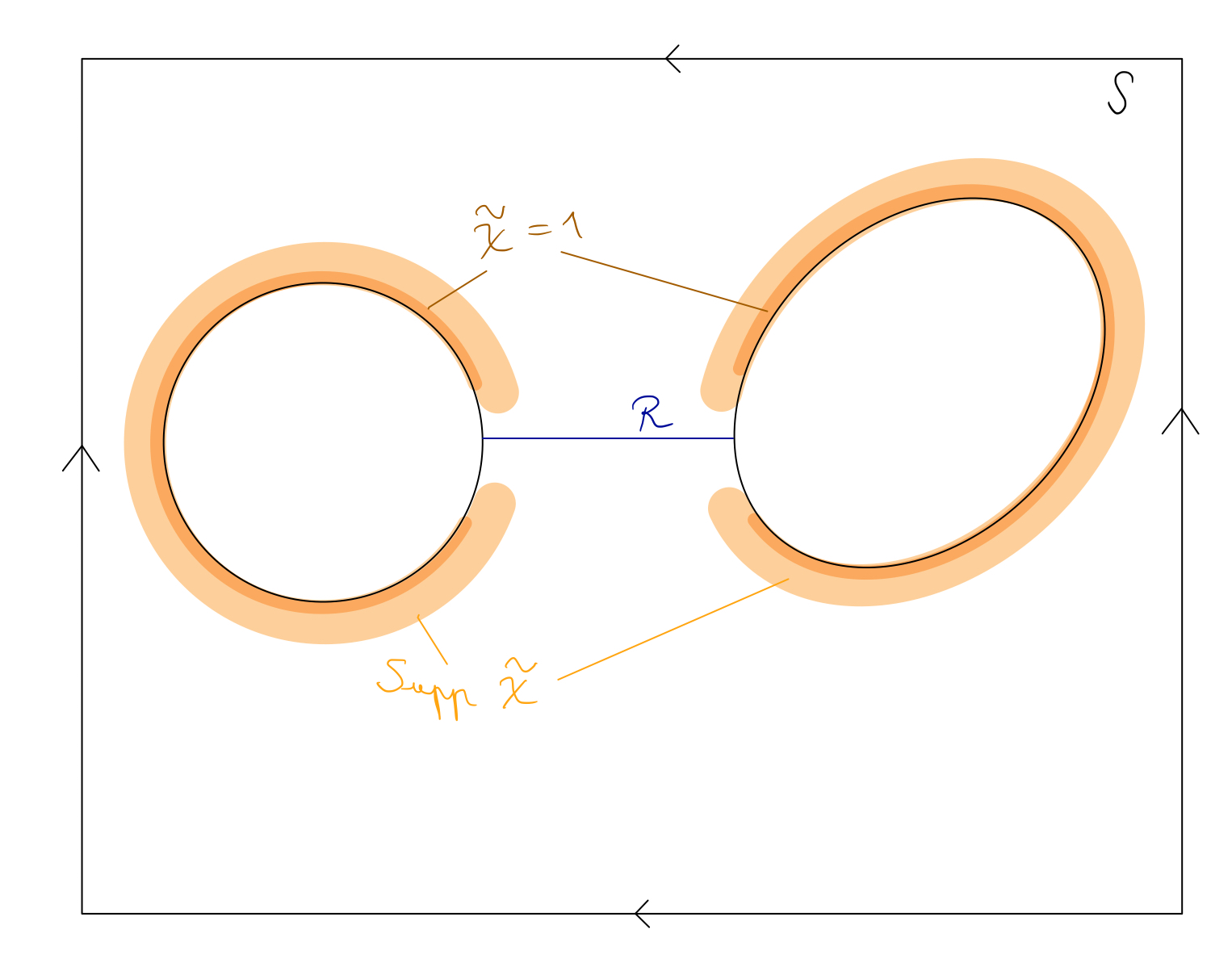}\caption{\label{fig:S}}

\end{figure}

\subsection{The third term: near the periodic ray}

We recall the smoothing effect with logarithmic loss obtained in \cite{MR2066943}
in the case of the exterior of many convex obstacles verifying the
Ikawa's condition - which is always verified in the present framework
of the exterior of two balls:
\begin{prop}
For any $\chi\in C_{0}^{\infty}(\mathbb{R}^{d})$ and any $u_{0}\in L^{2}(\Omega)$
such that $u_{0}=\psi(-h^{2}\Delta)u_{0}$ , we have
\begin{equation}
\Vert\chi e^{it\Delta_{D}}u_{0}\Vert_{L^{2}(\mathbb{R},L^{2})}\lesssim(h|\log h|)^{\frac{1}{2}}\Vert u_{0}\Vert_{L^{2}}\label{eq:smooth_logloss}
\end{equation}
\end{prop}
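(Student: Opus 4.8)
The plan is to deduce \eqref{eq:smooth_logloss} from a cutoff resolvent estimate with logarithmic loss at the relevant energy, the latter being the genuinely deep input, due to \cite{MR2066943} building on \cite{Ikawa2,IkawaMult}.

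First I would run the standard $TT^{\ast}$ argument, in the spirit of \cite{MR2068304} and of the proof of Proposition~\propref{smooth_wo}. Setting $T=\chi e^{it\Delta_{D}}\psi(-h^{2}\Delta):L^{2}(\Omega)\to L^{2}(\mathbb{R},L^{2}(\Omega))$, the operator $TT^{\ast}$ is convolution in $t$ against the operator-valued kernel $\chi e^{it\Delta_{D}}\psi^{2}(-h^{2}\Delta)\chi$, so Plancherel in time gives $\Vert TT^{\ast}\Vert=\sup_{\tau}\Vert\widehat{K}(\tau)\Vert_{L^{2}\to L^{2}}$, and Stone's formula identifies $\widehat{K}(\tau)$ with $\psi^{2}(-h^{2}\tau)$ times the spectral density of $-\Delta_{D}$ at energy $-\tau$, that is with $2\pi\,\psi^{2}(-h^{2}\tau)\,\chi\,\mathrm{Im}\,(-\Delta_{D}-(\mu+i0)^{2})^{-1}\chi$ where $\mu=\sqrt{-\tau}$. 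Since $\psi$ is supported in $[\alpha_{0},\beta_{0}]$ with $\alpha_{0}>0$, the frequency cut-off forces $\mu\sim h^{-1}$, whence
\[
\Vert\chi e^{it\Delta_{D}}u_{0}\Vert_{L^{2}(\mathbb{R},L^{2})}^{2}\le\Vert TT^{\ast}\Vert\,\Vert u_{0}\Vert_{L^{2}}^{2}\ \lesssim\ \sup_{\mu\sim h^{-1}}\bigl\Vert\chi\,(-\Delta_{D}-(\mu\pm i0)^{2})^{-1}\,\chi\bigr\Vert_{L^{2}\to L^{2}}\,\Vert u_{0}\Vert_{L^{2}}^{2}.
\]

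Then I would invoke the cutoff resolvent estimate with logarithmic loss,
\[
\bigl\Vert\chi\,(-\Delta_{D}-(\mu\pm i0)^{2})^{-1}\,\chi\bigr\Vert_{L^{2}(\Omega)\to L^{2}(\Omega)}\ \lesssim\ \frac{\log\mu}{\mu},\qquad\mu\to+\infty,
\]
which holds outside finitely many strictly convex obstacles satisfying Ikawa's condition; for exactly two obstacles this condition is vacuous, the trapped set of the billiard flow reducing to the single hyperbolic periodic orbit $\mathcal{R}$. Plugging $\mu\sim h^{-1}$ into the previous two displays yields $\log\mu/\mu\sim h|\log h|$, which is precisely \eqref{eq:smooth_logloss}. (For $\mu$ in a bounded set the resolvent is $O(1/\mu)$ by \cite{MR1618254}, but this range never arises here since $\alpha_{0}>0$.)

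The main obstacle is the resolvent bound itself near $\mathcal{R}$. Its proof rests on Ikawa's analysis of the periodic reflected ray — equivalently, a parametrix valid in a conic neighbourhood of $\mathcal{R}$, together with a resonance-free strip below the real axis up to the discrete family of resonances generated by $\mathcal{R}$ and a polynomial bound there — which is then glued, via a partition of unity and the escape-function/non-trapping estimate underlying Proposition~\propref{smooth_wo}, to the $O(1/\mu)$ bound away from the ray, the loss of a single power of $\log\mu$ coming from the hyperbolicity of $\mathcal{R}$. All of this is carried out in \cite{MR2066943} (building on \cite{Ikawa2,IkawaMult}), and I would simply cite it; the reduction above, from that resolvent bound back to \eqref{eq:smooth_logloss}, is elementary.
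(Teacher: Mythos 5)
Your proposal is correct and coincides with the paper's treatment: the paper states this proposition without proof, simply recalling it from \cite{MR2066943}, and your $TT^{\ast}$/Plancherel/Stone reduction to the cutoff resolvent bound $\Vert\chi(-\Delta_{D}-(\mu\pm i0)^{2})^{-1}\chi\Vert\lesssim\mu^{-1}\log\mu$ is exactly the standard mechanism by which that reference deduces the smoothing estimate with logarithmic loss from Ikawa's analysis of the hyperbolic periodic ray. The only thing to note is that, since the deep input (the resolvent bound near $\mathcal{R}$) is also proved in \cite{MR2066943}, one could equally well cite that reference directly for the smoothing estimate, as the paper does; your version just makes the elementary reduction explicit.
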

Let us denote, in this subsection, $\chi=\chi_{\text{osbt}}\chi_{\text{ray}}$
and $u=\psi(-h^{2}\Delta)e^{it\Delta_{D}}u_{0}$. We will here assume
moreover that Proposition \propref{semilog} holds and follow the
method of \cite{MR2720226} to control the third term of (\ref{eq:dec-red}).

In the spirit of \cite{MR2720226}, we will localize in time intervals
of length $h|\log h|$, on which we can apply the semi-classical Strichartz
estimates of Proposition \propref{semilog}. Consider $\varphi\in C_{0}^{\infty}((-1,1))$
satisfying $\varphi\geq0$, $\varphi(0)=1$ and $\sum_{j\in\mathbb{Z}}\varphi(s-j)=1$.
We decompose
\[
\chi u=\sum_{j\in\mathbb{Z}}\varphi(\frac{t}{h|\log h|}-j)\chi u=:\sum_{j\in\mathbb{Z}}u_{j}.
\]
The $u_{j}$ satisfy the equation
\[
(i\partial_{t}-\Delta_{D})u_{j}=F_{j}+G_{j}
\]
with
\begin{align*}
F_{j} & =(h|\log h|)^{-1}\varphi'(\frac{t}{h|\log h|}-j)\chi u,\\
G_{j} & =2\varphi(\frac{t}{h|\log h|}-j)\left(\nabla\chi\cdot\nabla u-\Delta_{D}\chi u\right).
\end{align*}
Let us denote

\begin{align*}
v_{j}(t) & =\int_{(j-1)h|\log h|}^{t}e^{i(t-s)\Delta_{D}}F_{j}(s)ds,\\
w_{j}(t) & =\int_{(j-1)h|\log h|}^{t}e^{i(t-s)\Delta_{D}}G_{j}(s)ds.
\end{align*}
Clearly, $u_{j}=v_{j}+w_{j}$. If we define
\begin{align*}
\tilde{v}_{j}(t) & =e^{it\Delta_{D}}\int_{(j-1)h|\log h|}^{(j+1)h|\log h|}e^{-is\Delta_{D}}F_{j}(s)ds,\\
\tilde{w}_{j}(t) & =e^{it\Delta_{D}}\int_{(j-1)h|\log h|}^{(j+1)h|\log h|}e^{-is\Delta_{D}}G_{j}(s)ds,
\end{align*}
the Christ-Kiselev lemma allows us to estimate the $L^{p}L^{q}$ norms
of $\tilde{v}_{j}$ and $\tilde{w}_{j}$ instead of $v_{j}$ and $w_{j}$. 

We can use the semi-classical Strichartz estimate on logarithmic interval
of Proposition \propref{semilog} to estimate $\Vert\tilde{v}_{j}\Vert_{L^{p}L^{q}}$:
\[
\Vert\tilde{v}_{j}\Vert_{L^{p}L^{q}}\lesssim\Vert\int_{(j-1)h|\log h|}^{(j+1)h|\log h|}e^{is\Delta_{D}}F_{j}(s)ds\Vert_{L^{2}}
\]
We take $\tilde{\chi}\in C_{0}^{\infty}$ equal to one on the support
of $\chi$ and use the dual version of (\ref{eq:smooth_logloss})
to get
\begin{eqnarray*}
\Vert\tilde{v}_{j}\Vert_{L^{p}L^{q}} & \lesssim & \Vert\int_{(j-1)h|\log h|}^{(j+1)h|\log h|}e^{is\Delta_{D}}F_{j}(s)ds\Vert_{L^{2}}\\
 &  & =\frac{1}{h|\log h|}\Vert\int_{(j-1)h|\log h|}^{(j+1)h|\log h|}e^{is\Delta_{D}}\tilde{\chi}\varphi'(\frac{t}{h|\log h|}-j)\chi uds\Vert_{L^{2}}\\
 & \lesssim & \frac{1}{h|\log h|}\times(h|\log h|)^{\frac{1}{2}}\Vert\varphi'(\frac{t}{h|\log h|}-j)\chi u\Vert_{L^{2}L^{2}},
\end{eqnarray*}
so we get
\[
\Vert\tilde{v}_{j}\Vert_{L^{p}L^{q}}\lesssim\frac{1}{(h|\log h|)^{1/2}}\Vert\varphi'(\frac{t}{h|\log h|}-j)\chi u\Vert_{L^{2}L^{2}}.
\]

Let us now estimate $\Vert\tilde{w}_{j}\Vert_{L^{p}L^{q}}$. Again,
because of Proposition \propref{semilog}, we have
\[
\Vert\tilde{w}_{j}\Vert_{L^{p}L^{q}}\lesssim\Vert\int_{(j-1)h|\log h|}^{(j+1)h|\log h|}e^{is\Delta_{D}}G_{j}(s)ds\Vert_{L^{2}}.
\]
But $G_{j}$ is localized away from the periodic ray. We take $\tilde{\chi}\in C_{0}^{\infty}$
equal to one on the support of $\nabla\chi$ and vanishing near the
periodic ray. Then $G_{j}=\tilde{\chi}G_{j}$. Hence we can use the
dual estimate of the smoothing effect without loss in the non trapping
region of Proposition \propref{smooth_wo} to get
\begin{eqnarray*}
\Vert\tilde{w}_{j}\Vert_{L^{p}L^{q}} & \lesssim & \Vert\int_{(j-1)h|\log h|}^{(j+1)h|\log h|}e^{is\Delta_{D}}G_{j}(s)ds\Vert_{L^{2}}=\Vert\int_{(j-1)h|\log h|}^{(j+1)h|\log h|}e^{is\Delta_{D}}\tilde{\chi}G_{j}(s)ds\Vert_{L^{2}}\\
 & \lesssim & \Vert G_{j}\Vert_{L^{2}H^{-1/2}}\\
 & \lesssim & \Vert\varphi(\frac{t}{h|\log h|}-j)\nabla\chi u\Vert_{L^{2}H^{1/2}}.
\end{eqnarray*}

We use the Christ-Kiselev lemma twice, take the square and sum to
obtain 
\begin{eqnarray*}
\sum_{j\in\mathbb{Z}}\Vert u_{j}\Vert_{L^{p}L^{q}}^{2} & \lesssim & \sum_{j\in\mathbb{Z}}\left(\frac{1}{h|\log h|}\Vert\varphi'(\frac{t}{h|\log h|}-j)\chi u\Vert_{L^{2}L^{2}}^{2}+\Vert\varphi(\frac{t}{h|\log h|}-j)\nabla\chi u\Vert_{L^{2}H^{1/2}}^{2}\right)\\
 & \lesssim & \frac{1}{h|\log h|}\Vert\chi u\Vert_{L^{2}L^{2}}^{2}+\Vert\nabla\chi u\Vert_{L^{2}H^{1/2}}^{2}
\end{eqnarray*}
the first term is controlled using the smoothing estimate with logarithmic
loss (\ref{eq:smooth_logloss}), and the second the smoothing estimate
on the non-trapping region (\ref{eq:lswlnt}). Hence we get 
\[
\sum_{j\in\mathbb{Z}}\Vert u_{j}\Vert_{L^{p}L^{q}}^{2}\lesssim\Vert\psi(-h^{2}\Delta)u_{0}\Vert_{L^{2}}^{2}.
\]
But, because of the continuous embedding $l^{2}(\mathbb{Z})\hookrightarrow l^{p}(\mathbb{Z})$
for $p\geq2$ we know that
\[
\Vert\chi u\Vert_{L^{p}L^{q}}\sim\left(\sum_{j\in\mathbb{Z}}\Vert u_{j}\Vert_{L^{p}L^{q}}^{p}\right)^{1/p}\lesssim\left(\sum_{j\in\mathbb{Z}}\Vert u_{j}\Vert_{L^{p}L^{q}}^{2}\right)^{1/2}
\]
and we thus can conclude:
\[
\Vert\tilde{\psi}(-h^{2}\Delta)\chi_{\text{ray}}\chi_{\text{obst}}\psi(-h^{2}\Delta)e^{it\Delta_{D}}u_{0}\Vert_{L^{p}L^{q}}\lesssim\Vert\psi(-h^{2}\Delta)u_{0}\Vert_{L^{2}}^{2}.
\]

\subsection{Conclusion}

We conclude from the two previous subsections that Proposition \propref{semilog}
implies the estimate 
\[
\Vert e^{-it\Delta_{D}}\psi(-h^{2}\Delta)u_{0}\Vert_{L^{p}(\mathbb{R},L^{q}(\Omega))}\leq\Vert\psi(-h^{2}\Delta)u_{0}\Vert_{L^{2}}.
\]
Like in \cite{MR2672795}, we can remove the frequency cut-off by
to get
\[
\Vert e^{-it\Delta_{D}}u_{0}\Vert_{L^{p}(\mathbb{R},L^{q}(\Omega))}\leq\Vert u_{0}\Vert_{L^{2}}.
\]
Hence Proposition \propref{semilog} implies \thmref{main}. Thus,
the rest of the paper will be devoted to prove Proposition \propref{semilog}.

\section{Reduction to the trapped rays}

Let $D$ be a neighborhood of the trapped ray. For technical reasons,
we suppose that $D$ is an open cylinder with the trapped ray for
axis. For $T>0$, we define the trapped set of $D$ in time $T$:
\begin{defn}
We say that $(x,\xi)\in T^{\star}\Omega\cap\left(\Omega\times\left\{ |\xi|\in[\alpha_{0},\beta_{0}]\right\} \right)$
belongs to the trapped set of $D$ in time $T$, denoted $\mathcal{T}_{T}(D)$
if and only if there exists a broken bicharacteristic $\gamma$ starting
from $D$ and $t<-T$ such that $\gamma(t)=(x,\xi)$. Moreover, we
define $\mathcal{\hat{T}}_{T}(D):=\mathcal{T}_{T}(D)\cap\left\{ D\times\mathbb{R}^{n}\right\} $.
\end{defn}
In other words, $\mathcal{T}_{T}(D)$ is composed of the points of
$\Omega\times\left\{ |\xi|\in[\alpha_{0},\beta_{0}]\right\} $ that
lie in $D$ after some time bigger than $T$, and $\mathcal{\hat{T}}_{T}(D)$
is composed of the points of $D\times\left\{ |\xi|\in[\alpha_{0},\beta_{0}]\right\} $
that still lie in $D$ after a time $T$ (\figref{trapped}).

We will say that $u\in L^{2}$ is micro-locally supported in $U\subset T^{\star}\Omega$
if $\text{Op}(a)u=u$ for all $a\in C_{0}^{\infty}(T^{\star}\Omega)$
such that $a=1$ in $U$. The aim of this section is to show that
it is sufficent to prove (\ref{eq:butult}) for data micro-locally
supported in $\mathcal{\hat{T}}_{2\epsilon|\log h|}(D)$. 

\begin{figure}
\includegraphics[scale=0.25]{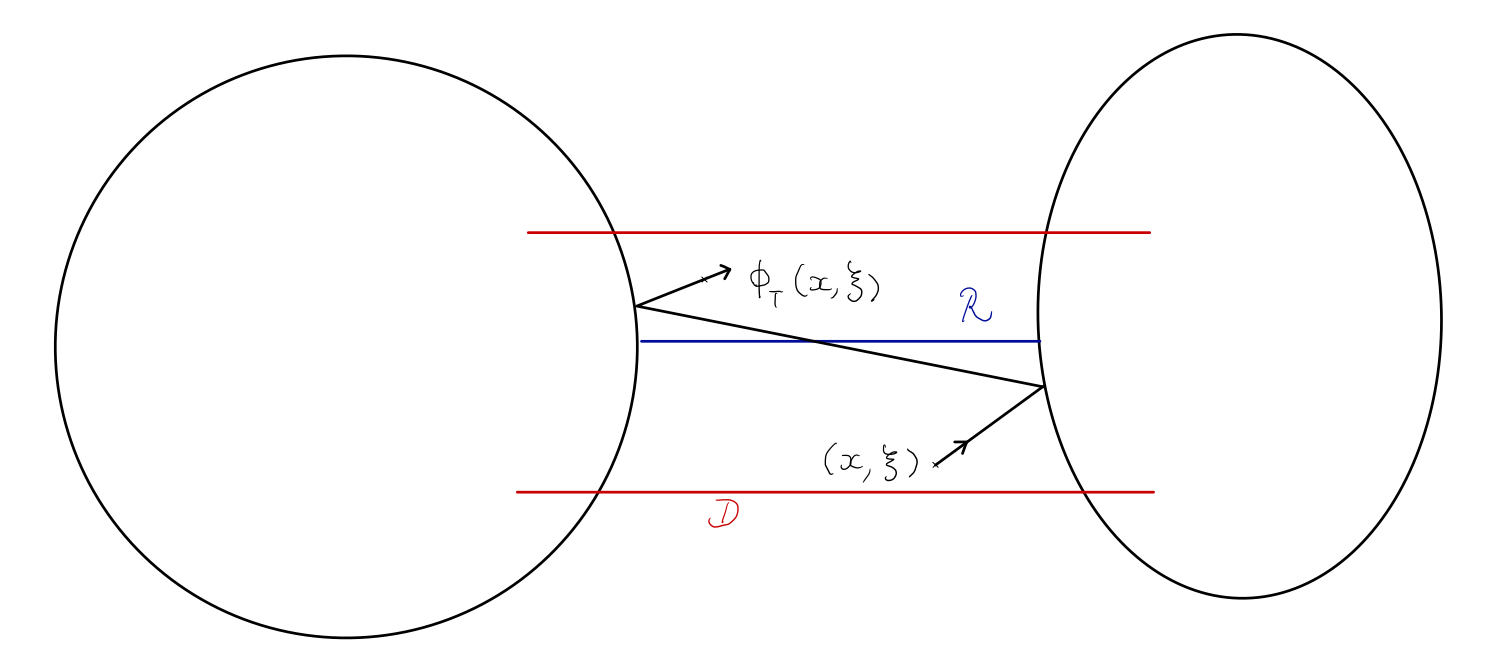}\caption{\label{fig:trapped}The trapped set}

\end{figure}

\subsection{Some properties of the billiard flow}

We first need some properties of the billiard flow associated with
$\mathbb{R}^{n}\backslash\left(\Theta_{1}\cup\Theta_{2}\right).$
More precisely, we are interested in the regularity in $(x,\xi)$
of the flow $\Phi_{t}(x,\xi)$. We first show:

\begin{lem}
\label{lem:hold12}Let $\Theta$ be a smooth, strictly convex compact
subset of $\mathbb{R}^{n}$, with no infinite order contact point,
and
\[
W(\Theta):=\left\{ (x,\xi),x\in\mathbb{R}^{n}\backslash\Theta,\xi\in\mathbb{R}^{n},\text{ s.t. }\exists t\geq0,x+t\xi\in\Theta\right\} .
\]
We denote by $t$ the application
\[
t:(x,\xi)\in W(\Theta)\longrightarrow\text{the smallest \ensuremath{t'} such that }x+t'\xi\in\partial\Omega,
\]
and by
\[
W_{\text{tan}}(\Theta)=\{(x,\xi)\in W\text{ s.t. }\xi\cdot n(x+t(x,\xi)\xi)=0\},
\]
the tangent rays. Then:
\begin{enumerate}
\item on $W(\Theta)\backslash W_{\text{tan}}(\Theta)$, $t$ is $C^{\infty},$
\item t is locally H\"{o}lder on $W(\Theta)$. 
\end{enumerate}
\end{lem}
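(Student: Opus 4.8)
The plan is to argue by entirely elementary means: the implicit function theorem disposes of the transversal rays, while a Morse reduction with parameters handles the grazing ones, the loss of smoothness there coming from taking a square root of a smooth function. For the \emph{transversal part}, I would fix $(x_0,\xi_0)\in W(\Theta)\setminus W_{\mathrm{tan}}(\Theta)$, set $t_0=t(x_0,\xi_0)$ and $z_0=x_0+t_0\xi_0\in\partial\Theta$, and pick a smooth local defining function $F$ of $\partial\Theta$ near $z_0$, with $F>0$ on $\Omega$, $F<0$ on $\Theta$ and $\nabla F$ parallel to the normal $n$ on $\partial\Theta$. Then $G(x,\xi,s):=F(x+s\xi)$ satisfies $G(x_0,\xi_0,t_0)=0$ and $\partial_s G(x_0,\xi_0,t_0)=\nabla F(z_0)\cdot\xi_0\neq 0$, the nonvanishing being precisely the hypothesis $(x_0,\xi_0)\notin W_{\mathrm{tan}}(\Theta)$ (and in fact this quantity is negative, since the ray enters $\Theta$ at $z_0$). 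The implicit function theorem then yields a $C^\infty$ map $\tau$ near $(x_0,\xi_0)$ with $\tau(x_0,\xi_0)=t_0$ and $G(x,\xi,\tau(x,\xi))=0$; using that $s\mapsto G(x,\xi,s)$ stays strictly decreasing near $t_0$ and that the ray of $(x_0,\xi_0)$ lies at positive distance from $\partial\Theta$ on each $[0,t_0-\delta]$, one checks that for $(x,\xi)$ close enough $\tau(x,\xi)$ is actually the first hitting time, so $\tau=t$ there. This gives $t\in C^\infty(W\setminus W_{\mathrm{tan}})$, hence locally Lipschitz, hence locally Hölder, on that set.

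For the \emph{grazing part}, let $(x_0,\xi_0)\in W_{\mathrm{tan}}(\Theta)$ with $t_0,z_0$ as above. I would choose coordinates centered at $z_0$ in which $\partial\Theta=\{y_n=-\phi(y')\}$ with $\phi(0)=0$, $\nabla\phi(0)=0$, $\Omega=\{y_n+\phi(y')>0\}$ locally, and $D^2\phi(0)\geq\kappa\,\mathrm{Id}$ for some $\kappa>0$ by strict convexity; tangency of $\xi_0$ reads $\xi_{0,n}=0$. The key object is
\[
g(s,x,\xi)=x_n+s\xi_n+\phi(x'+s\xi'),
\]
for which, near $(t_0,x_0,\xi_0)$, the ray point $x+s\xi$ lies in $\Omega$ iff $g>0$ and on $\partial\Theta$ iff $g=0$. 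The crucial computation is that at $(t_0,x_0,\xi_0)$ one has $\partial_s g=\xi_{0,n}+\nabla\phi(0)\cdot\xi_0'=0$ and $\partial_s^2 g=\xi_0'\cdot D^2\phi(0)\,\xi_0'\geq\kappa|\xi_0'|^2=\kappa|\xi_0|^2>0$ — this is exactly where strict convexity (equivalently, the finite, here second, order of contact) enters. It follows that $s\mapsto g(s,x,\xi)$ has a nondegenerate minimum near $t_0$; the implicit function theorem makes its location $s_\ast(x,\xi)$ into a $C^\infty$ function with $s_\ast(x_0,\xi_0)=t_0$, and then $m(x,\xi):=g(s_\ast(x,\xi),x,\xi)$ is $C^\infty$ with $m(x_0,\xi_0)=0$.

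Next I would invoke the Morse lemma with parameters — or, by hand, write $g(s_\ast+u,x,\xi)=m(x,\xi)+u^2\beta(u,x,\xi)$ with $\beta$ smooth and $\beta(0,x_0,\xi_0)>0$, then substitute $\sigma=u\sqrt{\beta(u,x,\xi)}$ — to produce a $C^\infty$ function $S(\sigma,x,\xi)$, strictly increasing in $\sigma$ near $0$, with $S(0,x,\xi)=s_\ast(x,\xi)$, such that the zeros in $s$ of $g(\cdot,x,\xi)$ near $t_0$ are $S(\pm\sqrt{-m(x,\xi)},x,\xi)$ when $m(x,\xi)\leq 0$, and there are none when $m(x,\xi)>0$. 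Since $\Theta$ is one convex body and, by a compactness argument resting on the fact that the tangent line to $\Theta$ at $z_0$ meets $\overline{\Theta}$ only at $z_0$, nearby rays can meet $\overline{\Theta}$ only near $z_0$, this describes the ray globally: for $(x,\xi)$ close to $(x_0,\xi_0)$ one has $(x,\xi)\in W(\Theta)$ iff $m(x,\xi)\leq 0$, and then
\[
t(x,\xi)=S\big(-\sqrt{-m(x,\xi)},\,x,\xi\big).
\]
Because $m$ is smooth, $-m\geq 0$ is locally Lipschitz on $\{m\leq 0\}$, so $(x,\xi)\mapsto\sqrt{-m(x,\xi)}$ is locally Hölder of exponent $1/2$ there (from $|\sqrt a-\sqrt b|\leq\sqrt{|a-b|}$); composing with the locally Lipschitz $S$ shows $t$ is locally Hölder-$1/2$ near $(x_0,\xi_0)$, which together with the transversal part yields assertion (2).

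The only genuine work lies near the grazing rays: extracting the two-branch, square-root structure of the zero set of $g$ (where the order-two contact from strict convexity is essential), and then verifying that the hit detected locally near $z_0$ is really the first hit of the whole ray and that $W(\Theta)$ coincides locally with $\{m\leq 0\}$ — both of which rely on convexity of $\Theta$ together with a compactness argument. The exponent $1/2$ produced this way is sharp, reflecting the standard grazing behaviour of the billiard flow.
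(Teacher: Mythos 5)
Your proof is correct and takes a genuinely different route from the paper's. Where you use a Morse reduction with parameters near a grazing ray --- choosing graph coordinates for $\partial\Theta$ at $z_0$, observing that $\partial_s^2 g>0$ there precisely because the second fundamental form is nondegenerate, and then extracting the explicit two-branch square-root structure $t(x,\xi)=S\bigl(-\sqrt{-m(x,\xi)},x,\xi\bigr)$ --- the paper instead applies the Malgrange $C^\infty$ preparation theorem to factor $h(x,\xi,t)=g(x+t\xi)$ near a grazing point as a smooth nonvanishing unit times a monic polynomial of degree $k$ in $t$ with $C^\infty$ coefficients, $k$ being the order of contact, and then quotes a result (cited as Brink) asserting that the roots of a monic polynomial depend H\"{o}lder-continuously on its coefficients with exponent $1/k$. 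The paper's route is slightly more general in that it handles arbitrary finite contact order in one stroke, giving exponent $1/k$ as recorded in \remref{geoK}; your argument is specialized to $k=2$, which is exactly what strict convexity in the sense the paper uses forces, but it is more elementary and self-contained (no preparation theorem, no perturbation theory of polynomial roots) and reads off the sharp exponent $1/2$ directly from the square root. It also makes explicit the local identification of $W(\Theta)$ with $\{m\leq 0\}$, and the fact that only a single pair of nearby zeros can occur, points the paper leaves tacit. Both approaches prove exactly the statement needed.
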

\begin{proof}
Let $g\in C^{\infty}(\mathbb{R}^{n},\mathbb{R})$ be such that $\partial\Theta$
is given by the implicit relation $g(x)=0$\emph{. }We denote, for
$x,\xi\in\mathbb{R}^{n}$, $t\in\mathbb{R}$
\[
h(x,\xi,t)=g(x+t\xi).
\]

\emph{Away from the tangent rays:} we pick $(x_{1},\xi_{1})\in W\backslash W_{\tan}$.
Let $t_{1}=t(x_{1},\xi_{1})$. Then, $\partial_{t}h(x_{1},\xi_{1},t_{1})=\xi_{1}\cdot\nabla g(x_{1}+t_{1}\xi_{1})\neq0$
by definition of $W\backslash W_{\text{tan}}$. Therefore, by the
implicit functions theorem, $t$ is a $C^{\infty}$ function of $(x,\xi)$
for $(x,\xi)$ in a neighborhood of $(x_{1},\xi_{1})$.

\emph{General case: }Let $(x_{1},\xi_{1})\in W$ and $t_{1}=(x_{1},\xi_{1})$.
Then $h(x_{1},\xi_{1},t_{1})=0$. Let $k\geq1$ be the smallest integer
such that
\[
h((x_{1},\xi_{1}),t_{1})=0,\partial_{t}h((x_{1},\xi_{1}),t_{1})=0,\dots,\partial_{t}^{k}h((x_{1},\xi_{1}),t_{1})\neq0.
\]
Note that $k$ exists because $\Theta$ has no infinite order contact
point. By the $C^{\infty}$ preparation theorem due to Malgrange \cite{Malgrange},
there exists $a_{1},\dots,a_{k-1}\in C^{\infty}(\mathbb{R}^{n}\times\mathbb{R}^{n})$,
and $c\in C^{\infty}(\mathbb{R\times}\mathbb{R}^{n}\times\mathbb{R}^{n})$
not vanishing on $((x_{1},\xi_{1}),t_{1})$, such that, on a neighborhood
of $((x_{1},\xi_{1}),t_{1})$, $h$ writes
\[
h(x,\xi,t)=c(x,\xi,t)(t^{k}+a_{k-1}(x,\xi)t^{k-1}+\dots+a_{0}(x,\xi)).
\]
Then, for $(x,\xi,t)$ in a neighborhood of $(x_{1},\xi_{1},t_{1})$,
\[
t=t(x,\xi)\iff t^{k}+a_{k-1}(x,\xi)t^{k-1}+\dots+a_{0}(x,\xi)=0.
\]
But, by \cite{Brink}, the roots of a monic polynomial are H\"{o}lder
with respect to the coefficients - of power one over the muliplicity
of the root. Because the $a_{i},\ 0\leq i\leq k-1$ are $C^{\infty}$
with respect to $(x,\xi)$, we conclude that $t$ is H\"{o}lder in a neighborhood
of $(x_{1},\xi_{1})$.
\end{proof}
\begin{rem}
\label{rem:geoK}The worst power in the H\"{o}lder inequality in a neighborhood
of a tangent ray is $\frac{1}{k}$, where $k$ is the order of contact
of the tangent ray. Note that in our framework of strictly geodesically
convex obstacles, $k=2$.
\end{rem}
Let $\eta>0$. We adopt the following notations for the tangents sets
to $\Theta_{1}\cup\Theta_{2}$ and their $\eta$-neighborhood: 
\[
W_{\tan}=\left(W_{\tan}(\Theta_{1})\cup W_{\tan}(\Theta_{2})\right)\cap\left\{ |\xi|\in[\alpha_{0},\beta_{0}]\right\} 
\]
\[
W_{\text{tan},i,\eta}=\{(x,\xi)\in W(\Theta_{i}),\ |\xi|\in[\alpha_{0},\beta_{0}]\text{ s.t. }|\xi\cdot n(x+t(x,\xi)\xi)|\leq\eta\},
\]
\[
W_{\text{tan},\eta}=W_{\text{tan},1,\eta}\cup W_{\text{tan},2,\eta}.
\]
We show that a ray cannot pass a small enough-neighborhood of the
tangent set more than twice, that is
\begin{lem}
\label{lem:2cross}There exists $\eta>0$ such that any ray cannot
cross $W_{\text{tan},\eta}$ more than twice.
\end{lem}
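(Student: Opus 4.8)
The plan is to reduce the statement to a geometric fact about the two fixed convex obstacles and then exploit the strict convexity to control how a billiard trajectory can approach tangency. First I would recall the picture: a ray crossing $W_{\text{tan},\eta}$ means that at one of its reflection-or-passage points near $\Theta_1$ or $\Theta_2$ its direction makes an angle with the boundary whose sine is at most $\eta/|\xi|\lesssim\eta$; in other words the incoming segment is \emph{almost} tangent to one of the obstacles. The key observation is that outside a neighborhood of the periodic ray $\mathcal{R}$ the dynamics is that of an open (non-trapping) billiard, and more importantly that a segment which is nearly tangent to a strictly convex obstacle $\Theta_i$ either misses it (grazing on the outside) or hits it with a very small angle and is reflected almost back along itself. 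Using strict convexity (second fundamental form bounded below), I would show that there is $\eta_0>0$ such that for $\eta\le\eta_0$, a ray which is $\eta$-tangent to $\Theta_i$ at one point cannot, after leaving a fixed compact set, return to be $\eta$-tangent to $\Theta_1\cup\Theta_2$ more than once more: a near-grazing hit at $\Theta_i$ sends the ray out to infinity along an almost-fixed direction (transverse to the line joining the obstacles), so it can graze $\Theta_j$ at most once on the way out and never come back.

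Concretely the steps are: (1) Parametrize the set of rays by an initial point and direction and, using Lemma \lemref{hold12}, note that the first-hitting time $t(x,\xi)$ and hence the reflection map $\Phi_t$ are $C^\infty$ away from $W_{\tan}$ and Hölder (with exponent $1/2$, by Remark \remref{geoK}) across it. (2) Prove a quantitative ``defocusing'' statement for a single strictly convex obstacle: if a ray hits $\Theta_i$ with $|\xi\cdot n|\le\eta$, then either it does not actually meet $\Theta_i$ (the nearly-tangent line passes outside, because strict convexity means the boundary curves away quadratically), or the reflected direction differs from minus the incident direction by $O(\eta)$; moreover in the first ``grazing outside'' case the trajectory thereafter travels in an essentially straight line making a definite angle with $\mathcal{R}$. (3) Combine: after the first crossing of $W_{\text{tan},\eta}$, the ray's direction is pinned to within $O(\eta)$ of a direction that is \emph{not} parallel to $\mathcal{R}$; since the two obstacles are compact and disjoint, such a ray can intersect the $\eta$-tangent neighborhood of the obstacles at most once more before escaping to infinity, and once it has escaped it never returns. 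Choosing $\eta$ small enough that all the $O(\eta)$ error terms are dominated by the fixed geometric separations (the distance between the obstacles, the lower bound on the curvature, the minimal angle between a grazing direction and $\mathcal{R}$) finishes the argument.

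The main obstacle is step (2), the quantitative control of near-tangent reflections for a strictly convex obstacle, and propagating it to step (3): one has to be careful that the ``$O(\eta)$'' bounds on the direction change are \emph{uniform} over the whole tangent neighborhood and that the Hölder-$\tfrac12$ loss in the flow near $W_{\tan}$ (which is the worst regularity available, by Remark \remref{geoK}) does not destroy the conclusion — here it does not, because we only need a qualitative ``escape'' statement plus crude size bounds, not differentiability. A secondary technical point is to handle the borderline case where a ray grazes $\Theta_i$ and then grazes $\Theta_j$: one must check, using the disjointness and strict convexity of the two obstacles together with the definition of $\mathcal{R}$ as the \emph{unique} periodic trajectory, that this double-grazing configuration cannot be iterated a third time, which is exactly where the hypothesis of \emph{two} obstacles (as opposed to three or more, where more complicated nearly-trapped tangent orbits can occur) is used.
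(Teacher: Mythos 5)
Your proposal takes a genuinely different route from the paper, but it contains a conceptual error and, more importantly, the decisive step is left as an unproved ``technical point.''

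First, the error: you twice assert that a near-tangent hit is ``reflected almost back along itself'' and that ``the reflected direction differs from minus the incident direction by $O(\eta)$.'' This is backwards. With $\xi'=\xi-2(n\cdot\xi)n$ and $|n\cdot\xi|\le\eta$, one gets $|\xi'-\xi|\le 2\eta$: a grazing ray continues almost \emph{unchanged} in direction. Reversal of direction is \emph{near-normal} incidence, the opposite regime. Similarly, the claim that a near-grazing hit sends the ray off ``along an almost-fixed direction transverse to $\mathcal{R}$'' is unjustified: a direction tangent to $\Theta_i$ at a generic boundary point can be nearly parallel to $\mathcal{R}$, and there is no reason for the grazing direction to be transverse to the axis.

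Second, even with the reflection law corrected, your step (3) does not close the argument. After grazing $\Theta_i$ the ray can hit $\Theta_j$ \emph{transversally}, reflect through a large angle, and head back towards $\Theta_i$; so two near-tangencies separated by transverse bounces are perfectly possible (and indeed the lemma allows two). Ruling out a \emph{third} is the whole content of the lemma, and your plan defers it to ``one must check... that this double-grazing configuration cannot be iterated a third time'' without saying how. That sentence is the lemma, not a secondary technical point.

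By contrast, the paper's proof is short and needs neither curvature estimates nor Hölder regularity of the flow. By compactness (extracting a convergent subsequence of offending rays as $\eta=\tfrac1n\to0$ and using continuity of the flow), it suffices to rule out an actual ray tangent to $\Theta_1\cup\Theta_2$ at three points. Consider the \emph{middle} tangent point $(x_0,\xi_0)\in\partial\Theta_i$. The straight line through $x_0$ in direction $\xi_0$ is a supporting line of the convex body $\Theta_i$, so it never re-enters $\Theta_i$; and since $\Theta_j$ is convex and does not contain $x_0$, the intersection of that line with $\Theta_j$ is an interval lying entirely on one side of $x_0$. Hence at most one of the two half-rays from $(x_0,\pm\xi_0)$ meets an obstacle. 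But the first tangent point (backwards) and the third tangent point (forwards) force \emph{both} half-rays to meet an obstacle — contradiction. This is precisely where ``only two obstacles'' enters, and it does so combinatorially rather than through any quantitative defocusing estimate. If you want to salvage your quantitative approach you would ultimately have to prove this same one-sidedness fact; the compactness reduction just lets you prove it for the clean limiting configuration instead of with $\eta$-error terms everywhere.
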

\begin{proof}
If it is not the case, for all $n\geq0$, there exists $(x_{n},\xi_{n})\in K\times\mathcal{S}^{2}$,
where $K$ is a compact set strictly containing the obstacles, such
that $\Phi_{t}(x_{n},\xi_{n})$ cross $W_{\text{tan},\frac{1}{n}}$
at least three times. Extracting from $(x_{n},\xi_{n})$ a converging
subsequence, by continuity of the flow, letting $n$ going to infinity
we obtain a ray that is tangent to $\Theta_{1}\cup\Theta_{2}$ in
at least three points. Therefore, it suffises to show that such a
ray cannot exists.

Remark that, because there is only two obstacles, if $(x,\xi)\in W_{\text{tan}}$,
if we consider the ray starting from $(x,\xi)$ and the ray starting
from $(x,-\xi)$, one of the two do not cross any obstacle in positive
times. But, if there is a ray tangent to the obstacles in at least
three points, if we consider the second tangent point $(x_{0},\xi_{0})$,
both rays starting from $(x_{0},\xi_{0})$ and $(x_{0},-\xi_{0})$
have to cross an obstacle, therefore, this is not possible.
\end{proof}
Now, we can control how much two rays starting from different points
and directions can diverge:
\begin{lem}
\label{lem:distm}Let $V$ be a bounded open set containing the convex
hull of $\Theta_{1}\cup\Theta_{2}$. Then, there exists $\alpha>0$,
$C>0$ and $\tau>0$ such that, for all $x,\tilde{x}\in V$, all $\xi,\tilde{\xi}$
such that $|\xi|,|\xi'|\in[\alpha_{0},\beta_{0}]$, for all $t>0$
there exists $t'$ verifying $|t'-t|\leq$ $\tau$ such that 
\begin{equation}
d(\varPhi_{t'}(\tilde{x},\tilde{\xi}),\varPhi_{t'}(x,\xi)))\leq C^{t'}d((\tilde{x},\tilde{\xi}),(x,\xi))^{\alpha}.\label{eq:div}
\end{equation}
\end{lem}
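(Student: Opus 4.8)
The plan is to follow a single ray across its reflections, controlling both the number of reflections occurring before time $t$ and the way nearby rays track each other between consecutive reflections. Fix $(x,\xi)$ and $(\tilde x,\tilde\xi)$ as in the statement, with $|\xi|,|\tilde\xi|\in[\alpha_0,\beta_0]$, and let $0<s_1<s_2<\cdots$ be the reflection times of the ray issued from $(x,\xi)$. Since the obstacles are strictly convex and contained in a fixed ball, there is a uniform lower bound $s_{j+1}-s_j\ge c_0>0$ on the time between two successive reflections (a ray leaving one convex body must travel a definite distance before hitting the other or escaping); hence the number $N(t)$ of reflections before time $t$ is at most $Ct$. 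Between two reflections the flow is just the free flow $(x,\xi)\mapsto(x+s\xi,\xi)$, which is Lipschitz with constant $O(1)$ on the time-scale $c_0\le s\le$ (diameter$/\alpha_0$); and at each reflection, the reflection map has a modulus of continuity governed by Lemma~\ref{lem:hold12}: away from the tangent set it is Lipschitz, and near a tangent ray it is only H\"older, with exponent $\ge 1/k=1/2$ by Remark~\ref{rem:geoK} (strictly convex obstacles have contact order $k=2$). By Lemma~\ref{lem:2cross}, after fixing $\eta>0$ appropriately, the ray issued from $(x,\xi)$ can enter the $\eta$-neighborhood $W_{\text{tan},\eta}$ of the tangent set at most twice; on all the other reflections, the angle with the normal is bounded away from $0$ by $\eta$, so the reflection map there is Lipschitz with a uniform constant $L=L(\eta)$.

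Composing these estimates along the ray gives the bound: if $d((\tilde x,\tilde\xi),(x,\xi))$ is small, then after $N=N(t)$ reflections the two rays are within $C_1^{\,N}\,d((\tilde x,\tilde\xi),(x,\xi))^{\alpha}$ of each other, where the exponent $\alpha$ collects the (at most two) H\"older losses of exponent $1/2$, giving $\alpha=1/4$ (or more precisely $\alpha = 2^{-m}$ with $m\le 2$ the maximal number of near-tangent reflections), and $C_1$ absorbs the Lipschitz constants $L$, the free-flow constants, and the implicit constant from Lemma~\ref{lem:hold12}. Since $N\le Ct$, this reads $C_1^{Ct}=C^{t}$ with $C=C_1^{C}$. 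One technical point forces the appearance of $t'$ rather than $t$ in \eqref{eq:div}: the two rays reflect at slightly different times, so at the \emph{exact} time $t$ one of them may be in the middle of a free segment while the other has just reflected, and comparing $\Phi_t$ of the two at precisely the same $t$ could be awkward right at a reflection instant; choosing $t'$ within $\tau$ of $t$ — with $\tau$ of order $c_0$, the minimal inter-reflection time, or slightly larger — lets us place $t'$ so that neither ray is at a reflection and both have undergone the same number of reflections, at which point the composed estimate applies cleanly. (Alternatively $\tau$ simply absorbs the accumulated time-discrepancy between the two reflection sequences, which stays bounded because the divergence stays small.)

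I would carry this out in the following order. First, establish the uniform gap $s_{j+1}-s_j\ge c_0$ and hence $N(t)\le Ct$, using only strict convexity and boundedness of $\Theta_1\cup\Theta_2$. Second, invoke Lemma~\ref{lem:2cross} to fix $\eta$ and record that at most two reflections of the reference ray are $\eta$-near-tangent. Third, prove a one-reflection comparison lemma: if two rays enter a neighborhood of $\partial\Theta_i$ close together, their post-reflection states are close, Lipschitzly if the reference reflection is $\eta$-transverse (via part (1) of Lemma~\ref{lem:hold12} and smooth dependence of the reflection law), and H\"older with exponent $1/2$ otherwise (via part (2) and Remark~\ref{rem:geoK}). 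Fourth, iterate: track $\delta_j:=d(\Phi_{s_j^+}(\tilde x,\tilde\xi),\Phi_{s_j^+}(x,\xi))$, showing $\delta_{j+1}\le L\,\delta_j$ at transverse reflections and $\delta_{j+1}\le C\,\delta_j^{1/2}$ at the (at most two) near-tangent ones, so that $\delta_{N}\le C_1^{\,N}\,\delta_0^{\alpha}$; interleave with the free-flow Lipschitz bound on the segments. Fifth, translate $N\le Ct$ into the exponential factor $C^{t'}$ and choose $t'$ with $|t'-t|\le\tau$ avoiding reflection instants of both rays.

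The main obstacle is the fourth step, the iteration near tangency: a priori one near-tangent reflection degrades a Lipschitz estimate to a square-root estimate, and if this could happen at every reflection the composed bound would collapse (an $\alpha$ shrinking like $2^{-N}$ is useless). The whole argument hinges on Lemma~\ref{lem:2cross} to cap the number of such degradations at a fixed constant independent of $t$, so that $\alpha$ is a fixed positive number; care is needed to see that the square-root loss is incurred only by the reference ray's genuine near-tangencies and that once the rays have diverged they cannot both be near-tangent in a way that compounds — here again boundedness of the divergence (it stays small as long as $\delta_0$ is small enough, for $t$ in any fixed range, and the statement only claims a bound, vacuous once the right side exceeds the diameter) keeps everything consistent.
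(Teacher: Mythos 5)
Your strategy is the same as the paper's: control $N(t)\lesssim t$ via the uniform gap between reflections, use Lipschitz propagation along free segments, cap the number of near-tangent reflections via Lemma~\ref{lem:2cross}, compose, and shift to a nearby time $t'$ to absorb the offset between the two rays' reflection instants. But your step~3, the one-reflection comparison lemma, has a genuine gap: as stated it assumes that both rays actually strike the same obstacle. Near a tangent ray this fails — one ray may hit $\Theta_i$ while the other grazes past without reflecting at all, so the two reflection sequences can differ in length, not merely in timing, and your step~5 (``place $t'$ so neither ray is at a reflection and both have undergone the same number of reflections'') is then impossible to realize. The paper devotes a separate case (its Case~B.2) precisely to this: it interpolates an intermediate tangent ray between the two, compares each of the original rays to it (one by the same-obstacle H\"older estimate, the other by a pure free-flow estimate), and shows that this situation forces $(x,\xi)\in W_{\tan,\eta}$, so the extra H\"older loss is charged against the bounded budget from Lemma~\ref{lem:2cross}. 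Without an argument of this kind your iteration does not close; flagging that ``care is needed'' is not a substitute for resolving it.

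A smaller point concerns the exponent. You budget at most two square-root losses (giving $\alpha=1/4$), counting only the reference ray's near-tangencies. But the H\"older degradation in the one-reflection step occurs whenever \emph{either} ray lies in $W_{\tan,\eta}$, and the two rays need not enter and leave this neighbourhood simultaneously even when they are close. Applying Lemma~\ref{lem:2cross} to each ray separately gives a budget of four near-tangent encounters, which is why the paper obtains $\mu^4$ rather than $\mu^2$. This does not undermine the structure of your argument, since the exponent is still a fixed positive constant independent of $t$, but the bookkeeping as written is off.
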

\begin{proof}
\textbf{Preliminary notations and remarks.} For $i=1,2$, let $t_{i}$
be the application associated to $W(\Theta_{i})$ by \lemref{hold12}.
According to \lemref{2cross}, we choose $\eta>0$ small enough so
that any ray cannot cross $W_{\text{tan},\eta}$ more than twice.
Note that, by \lemref{hold12}, because $W_{i}$ is compact, $t_{i}$
is globally H\"{o}lder on $W_{i}$. We denote by $\mu>0$ the smallest
of the two H\"{o}lder powers. Moreover, $t_{i}$ is $C^{\infty}$ on $W\backslash W_{\text{tan,i,2\ensuremath{\eta}}}$
thus in particular globally Lipschitz on $W\backslash W_{\text{\ensuremath{\tan}},i,\eta}$.

\textbf{Case A:} \textbf{$(\tilde{x},\tilde{\xi}),(x,\xi)$ are far.}
We pick $\epsilon_{0}>0$ to be choosen later. Note that we always
have
\[
d(\varPhi_{t}(\tilde{x},\tilde{\xi}),\varPhi_{t}(x,\xi))\leq d(x,\tilde{x})+2\beta_{0}t+2\beta_{0},
\]
therefore, if $d((\tilde{x},\tilde{\xi}),(x,\xi))\geq\epsilon_{0}$,
\begin{equation}
d(\varPhi_{t}(\tilde{x},\tilde{\xi}),\varPhi_{t}(x,\xi))\leq d(x,\tilde{x})+d((\tilde{x},\tilde{\xi}),(x,\xi))\frac{2\beta_{0}}{\epsilon_{0}}(t+1),\label{eq:cA}
\end{equation}
and the estimate holds.

\textbf{Case B:} \textbf{$(\tilde{x},\tilde{\xi}),(x,\xi)$ are close}.
Now, we suppose that\emph{ $d((\tilde{x},\tilde{\xi}),(x,\xi))<\epsilon_{0}$.}
Let $t_{0}$, resp. $t_{1}$, be the first time where $\left\{ \Phi_{t}(x,\xi),t\geq0\right\} $,
resp. $\left\{ \Phi_{t}(\tilde{x},\tilde{\xi}),t\geq0\right\} $,
cross an obstacle, with the convention that $t_{0}=+\infty$, resp.
$t_{1}=+\infty$ if this does not happen. We denote by $X_{0}$ and
$X_{1}$ the eventual points of intersection with the obstacles. We
suppose that $t_{0}\leq t_{1}$. Note that, for $0\leq t\leq t_{0}$,
$\Phi_{t}(\tilde{x},\tilde{\xi})=(\tilde{x}+t\tilde{\xi},\tilde{\xi})$,
$\Phi_{t}(x,\xi)=(x+t\xi,\xi)$ and thus 
\begin{equation}
d(\varPhi_{t}(\tilde{x},\tilde{\xi}),\varPhi_{t}(x,\xi))\leq(1+t)d((\tilde{x},\tilde{\xi}),(x,\xi)),\ \text{for }0\leq t\leq t_{0}.\label{eq:cB}
\end{equation}
Now, we would like to understand what happens for $t\geq t_{0}.$

\emph{Case B.1: $t_{0},t_{1}<\infty$ and $X_{0}$ and $X_{1}$ belongs
to the same obstacle $\Theta_{i}$. }

\begin{figure}
\includegraphics[scale=0.35]{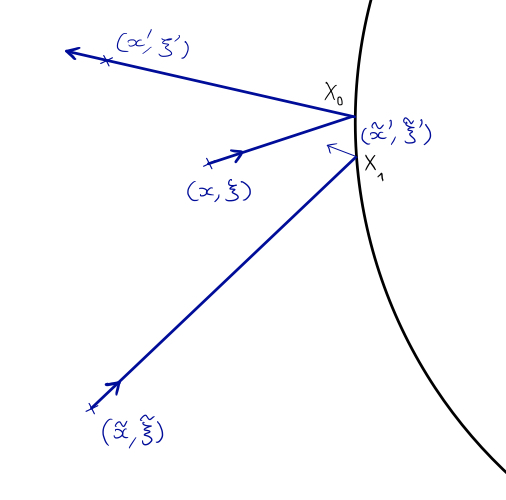}

\caption{Case B.1}

\end{figure}
We have, because $t$ is $\mu$-H\"{o}lder on $W_{i}$
\begin{equation}
|t_{0}-t_{1}|\leq Cd((\tilde{x},\tilde{\xi}),(x,\xi))^{\mu}\leq C\epsilon_{0}^{\mu}\label{eq:t0t1}
\end{equation}
Where $C$ depends only of the geometry of the obstacles. We choose
$\epsilon_{0}$ small enough so that 
\begin{equation}
C\epsilon_{0}^{\mu}\leq\frac{1}{4}\frac{d}{\beta_{0}}\label{eq:ceps}
\end{equation}
where $d$ is the distance between the obstacles. Then, at the time
$t_{1}$, $\left\{ \Phi_{t}(x,\xi),t\geq0\right\} $ has not crossen
another obstacle yet. Thus, $\Phi_{t_{1}}(x,\xi)=(x',\xi')$ and $\Phi_{t_{1+}}(\tilde{x},\tilde{\xi})=(\tilde{x}',\tilde{\xi}')$
are given by, after reflection
\[
\begin{cases}
(x',\xi') & =\left(x+t_{0}\xi+(t_{1}-t_{0})\xi',\ \xi-2(n\cdot\xi)\xi\right)\\
(\tilde{x}',\tilde{\xi}') & =\left(\tilde{x}+t_{1}\tilde{\xi},\ \tilde{\xi}-2(\tilde{n}\cdot\tilde{\xi})\tilde{\xi}\right)
\end{cases}
\]
with
\[
\begin{cases}
n= & n(x+t_{0}\xi)\\
\tilde{n}= & n(\tilde{x}+t_{1}\tilde{\xi})
\end{cases}.
\]
Note that, because $X\in\partial\Theta_{i}\rightarrow n(X)$ is $C^{\infty}$,
\begin{align*}
|n-\tilde{n}| & \leq|n(x+t_{0}\xi)-n(\tilde{x}+t_{0}\tilde{\xi})|+|n(\tilde{x}+t_{0}\tilde{\xi})-n(\tilde{x}+t_{1}\tilde{\xi})|\\
 & \leq C|x+t_{0}\xi-\tilde{x}-t_{0}\tilde{\xi}|+C|t_{1}-t_{0}||\tilde{\xi}|\\
 & \leq C|x-\tilde{x}|+|t_{0}||\xi-\tilde{\xi}|+C|t_{1}-t_{0}||\tilde{\xi}|,
\end{align*}
moreover, note that 
\begin{equation}
|t_{i}|\leq\frac{\text{diam}(V)}{\alpha_{0}},\label{eq:tidbb}
\end{equation}
thus, because of \lemref{hold12}, we get 
\begin{align}
|n-\tilde{n}| & \leq|x-\tilde{x}|+|t_{0}||\xi-\tilde{\xi}|+C|\tilde{\xi}|d((\tilde{x},\tilde{\xi}),(x,\xi))^{\mu_{0}}\nonumber \\
 & \leq Cd((\tilde{x},\tilde{\xi}),(x,\xi))^{\mu_{0}}.\label{eq:c333}
\end{align}
where
\[
\begin{cases}
\mu_{0}=\mu & \text{if }(\tilde{x},\tilde{\xi})\in W_{\tan,\eta}\text{ or }(x,\xi)\in W_{\tan,\eta},\\
\mu_{0}=1 & \text{else}.
\end{cases}
\]
Now, to control $d((x',\xi'),(\tilde{x}',\tilde{\xi}'))$ we only
have to write
\[
\begin{cases}
\tilde{\xi}'-\xi' & =\tilde{\xi}-\xi-2(\tilde{n}\cdot\tilde{\xi})(\tilde{\xi}-\xi)+2(\tilde{n}-n)\cdot\tilde{\xi}\xi+n\cdot(\tilde{\xi}-\xi)\xi\\
\tilde{x}'-x' & =\tilde{x}-x+(t_{1}-t_{0})\tilde{\xi}-t_{0}(\xi-\tilde{\xi})-(t_{1}-t_{0})\xi'
\end{cases}
\]
and because of (\ref{eq:c333}), (\ref{eq:tidbb}), and \lemref{hold12}
we obtain
\begin{equation}
d(\Phi_{t_{1}}(x,\xi),\Phi_{t_{1+}}(\tilde{x},\tilde{\xi}))\leq Cd((x,\xi),(\tilde{x},\tilde{\xi}))^{\mu_{0}}\label{eq:B1ccl}
\end{equation}
with, because of (\ref{eq:t0t1}) and (\ref{eq:ceps})
\[
|t_{0}-t_{1}|\leq\frac{1}{4}\frac{d}{\beta_{0}}.
\]

\emph{Case B.2: $t_{0}<\infty$ and $t_{1}=+\infty$; or $t_{0},t_{1}<\infty$
and $X_{0}$ and $X_{1}$ belongs to different obstacles. }Suppose
for example that $X_{0}\in\Theta_{1}$. For the sake of simplicity,
we do the proof in dimension 2 and then explain how to adapt it in
dimension 3. 

Suppose that the ray starting from $(x,\tilde{\xi})$ do not cross
$\Theta_{1}$. We can always take $\xi_{1}$ be such that $(x,\xi_{1})\in W_{\text{tan},1}$
with $|\xi_{1}|=|\xi|$, and 
\begin{equation}
|\xi-\xi_{1}|\leq|\xi-\tilde{\xi}|,\ |\tilde{\xi}-\xi_{1}|\leq|\xi-\tilde{\xi}|,\label{eq:b21}
\end{equation}
that is, we choose $\xi_{1}$ such that the ray starting from $(x,\xi_{1})$
is tangent to $\Theta_{1}$ and between the ray starting from $(x,\xi)$
and the ray starting from $(x,\tilde{\xi})$ for $t\leq t_{0}$ (\figref{B2a}). 

Remark that, as a consequence of (\ref{eq:b21}) because $(x,\xi_{1})\in W_{\text{tan}}$
and $|\xi-\tilde{\xi}|\leq\epsilon_{0}$, taking $\epsilon_{0}\leq\frac{1}{2}\eta$
assure that, necessarily, $(x,\xi)\in W_{\text{tan},\eta}$.

Let $t_{0}'$ be the time for which the ray starting from $(x,\xi_{1})$
is tangent to $\Theta_{1}$. Note that, because the application $t$
is H\"{o}lder, in the same way that (\ref{eq:t0t1}), and because of (\ref{eq:b21}),
taking again $\epsilon_{0}$ small enough so that (\ref{eq:ceps})
is verified, we have
\[
|t_{0}-t'_{0}|\leq\frac{1}{4}\frac{d}{\beta_{0}}.
\]
Obviously
\begin{equation}
d(\Phi_{t_{0'}}(x,\xi),\Phi_{t_{0'}}(x,\tilde{\xi}))\leq d(\Phi_{t_{0}'}(x,\xi),\Phi_{t_{0}'}(x,\xi_{1}))+d(\Phi_{t_{0}'}(x,\xi_{1}),\Phi_{t_{0}'}(x,\tilde{\xi})).\label{eq:b22}
\end{equation}
But, using the case B.1
\begin{equation}
d(\Phi_{t_{0}'}(x,\xi),\Phi_{t_{0}'}(x,\xi_{1}))\leq Cd((x,\xi),(x,\xi_{1}))^{\mu}\label{eq:b23}
\end{equation}
and moreover, in the same way than (\ref{eq:cB}), because of (\ref{eq:tidbb}),
\begin{equation}
d(\Phi_{t_{0}'}(x,\tilde{\xi}),\Phi_{t_{0}'}(x,\xi_{1}))\leq Cd((x,\tilde{\xi}),(x,\xi_{1})).\label{eq:b24}
\end{equation}
Combining (\ref{eq:b21}), (\ref{eq:b22}), (\ref{eq:b23}) and (\ref{eq:b24})
we obtain
\[
d(\Phi_{t_{0}'}(x,\tilde{\xi}),\Phi_{t_{0}'}(x,\xi))\leq C|\xi-\tilde{\xi}|^{\mu}.
\]
But, in the same way than (\ref{eq:cB}) again, because none of the
rays starting from $(\tilde{x},\tilde{\xi})$ or $(x,\tilde{\xi})$
cross obstacles,
\[
d(\Phi_{t_{0}'}(\tilde{x},\tilde{\xi}),\Phi_{t_{0}'}(x,\tilde{\xi}))\leq Cd((x,\tilde{\xi}),(x,\xi_{1}))
\]
and we conclude that
\[
d(\Phi_{t_{0}'}(\tilde{x},\tilde{\xi}),\Phi_{t_{0}'}(x,\xi))\leq Cd((\tilde{x},\tilde{\xi}),(x,\xi))^{\mu}.
\]
\begin{figure}
\includegraphics[scale=0.25]{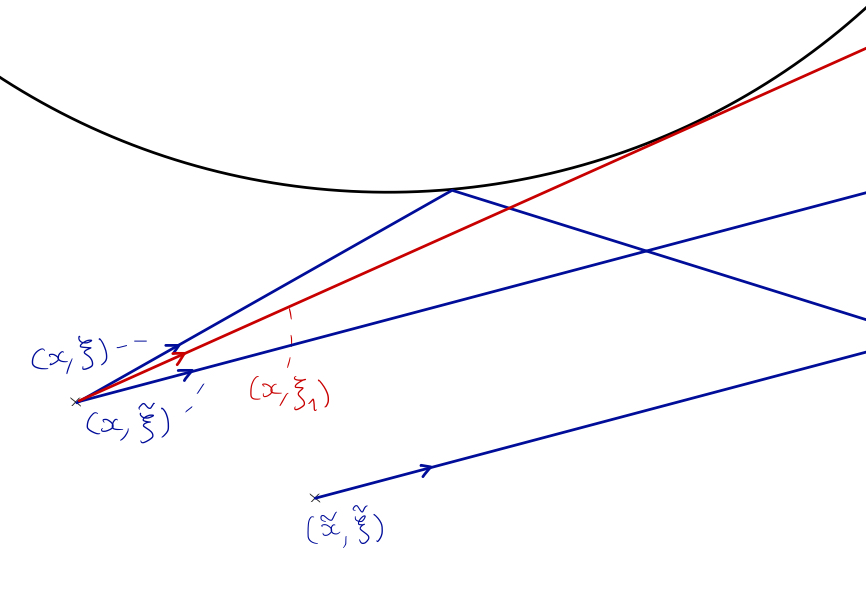}\caption{\label{fig:B2a}Case B.2.a - the ray from $(x,\tilde{\xi})$ do not
cross $\Theta_{1}$}
\end{figure}

Now, suppose that the ray starting from $(x,\tilde{\xi})$ do cross
$\Theta_{1}$. Then, there exists $x_{1}\in[x,\tilde{x}]$ such that
the ray starting from $(x_{1},\tilde{\xi})$ is tangent to $\Theta_{1}$
(\figref{B2b}). We do the exact same study as in the previous case,
now taking the ray starting from $(x_{1},\tilde{\xi})$ as intermediary
to obtain again
\begin{equation}
d(\Phi_{t_{0}'}(\tilde{x},\tilde{\xi}),\Phi_{t_{0}'}(x,\xi))\leq Cd((\tilde{x},\tilde{\xi}),(x,\xi))^{\mu},\ |t_{0}-t'_{0}|\leq\frac{1}{4}\frac{d}{\beta_{0}}.\label{B2ccc}
\end{equation}
Remark that, taking again $\epsilon_{0}\leq\frac{1}{2}\eta$, we know
in the same way that necessarily,$(x,\xi)\in W_{\text{tan},\eta}$. 

In dimension 3, we do the same proof with another intermediate point,
in order to treat only coplanar rays three by three.

\begin{figure}
\includegraphics[scale=0.22]{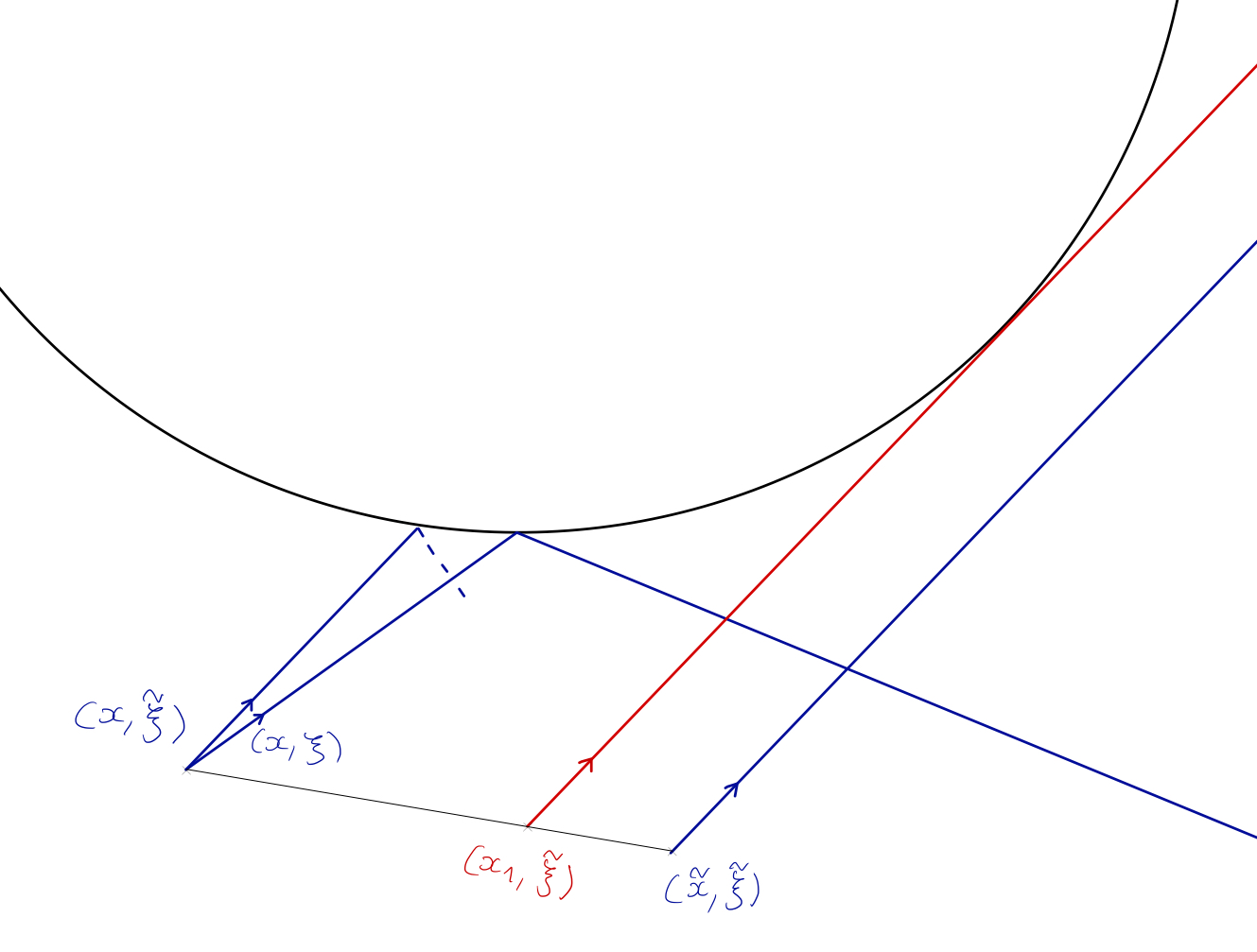}\caption{Case B.2.b - the ray from $(x,\tilde{\xi})$ cross $\Theta_{1}$\label{fig:B2b}}
\end{figure}

\textbf{Conclusion:} To conclude, we iterate this argument up to time
$t$: As soon as $d(\Phi_{T_{0}}(x,\xi),\Phi_{T_{0}}(\tilde{x},\tilde{\xi}))\geq\epsilon_{0}$,
we use (\ref{eq:cA}) for times larger than $T_{0}$. Between reflections,
we use (\ref{eq:cB}) together with (\ref{eq:tidbb}). At reflections,
we use (\ref{eq:B1ccl}) (case B.1) or (\ref{B2ccc}) (case B.2).
Note that because the rays cannot cross $W_{\tan,\eta}$ more than
twice, we are in the case where $\mu_{0}=\mu$ at most four times.
So we get, with $N(t)$ the number of reflections we have encountered
in time $t$:
\begin{equation}
d(\Phi_{t}(\tilde{x},\tilde{\xi}),\Phi_{t}(x,\xi))\leq C^{N(t)}(1+t)d((x,\xi),(\tilde{x},\tilde{\xi}))^{\mu^{4}},\label{eq:preccl}
\end{equation}
for all $t\geq0$ except thoses in the intervals $[t_{0},t_{1}]$
when the case B.1. is encountered and thoses in the intervals $[t_{0},t'_{0}]$
when the case B.2 is encountered. Note that we always have 
\[
|t_{0}-t_{1}|,|t_{0}-t'_{0}|\leq\frac{1}{4}\frac{d}{\beta_{0}},
\]
and the time separing the $t_{1}$'s (resp $t_{0}$'s) from another
such forbiden interval - that is, from another reflection of one of
the two rays - is at least $\frac{d}{\beta_{0}}-|t_{0}-t_{1}|\geq\frac{d}{\beta_{0}}-\frac{1}{4}\frac{d}{\beta_{0}}>0$
(resp $\frac{d}{\beta_{0}}-|t_{0}-t_{0}'|$); therefore (\ref{eq:preccl})
holds for all $t$ except in disjoints intervals of lenght at most
$\frac{\tau}{2}=\frac{d}{4\beta_{0}}$. To conclude, it suffise to
note that $N(t)\leq2\frac{\beta_{0}}{d}t$.
\end{proof}
\begin{rem}
\label{rem:smallTAUbigC} Because the directional component of the
flow is not continuous with respect to time, we can not have (\ref{eq:div})
for all time. However, we can take $\tau>0$ as small as we want at
the cost to take a bigger constant $C$ in (\ref{eq:div}) - that
corresponds to take $\epsilon_{0}$ smaller.
\end{rem}
\begin{rem}
In our framework of strictly geodesically convex obstacles, according
to Remark \remref{geoK} and of (\ref{eq:preccl}), we have $\alpha=2^{4}=16$.
\end{rem}

\subsection{Properties of the trapped set}

We now investigate some properties of the trapped set we have defined
in the begining of the section.
\begin{lem}
\label{lem:distbic}For all bicharacteristic $\gamma$ starting from
$D$ with speed in $[\alpha_{0},\beta_{0}]$, we have
\[
d(\gamma(t),\mathcal{T}_{T}(D)^{c})>0\ \forall t\in[-T-1,-T]
\]
\end{lem}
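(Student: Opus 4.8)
The assertion $d(\gamma(t),\mathcal{T}_T(D)^c)>0$ is precisely the statement that $\gamma(t)$ lies in the interior of $\mathcal{T}_T(D)$, relative to the phase space $T^\star\Omega\cap(\Omega\times\{|\xi|\in[\alpha_0,\beta_0]\})$; equivalently, every admissible $(z,\zeta)$ sufficiently close to $\gamma(t)$ is again in $\mathcal{T}_T(D)$. The plan is to exhibit, for each such nearby $(z,\zeta)$, a broken bicharacteristic through it that reaches $D$ after a time larger than $T$. The only real ingredient is the divergence estimate \lemref{distm}, used in the flexible form provided by \remref{smallTAUbigC}: we may take the time‑shift $\tau$ there as small as we please, at the cost of a larger constant $C$, which is harmless since $T$ is fixed.

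First I would fix $\gamma$ and $t\in[-T-1,-T]$ and select a base point strictly inside $D$, slightly past time $0$ along $\gamma$. Since $X_0(\gamma)\in D$, $D$ is open, and $\sigma\mapsto X_\sigma(\gamma)$ is continuous, there is $\sigma_*\in(0,1/2)$ with $\rho:=d\bigl(X_{\sigma_*}(\gamma),D^c\bigr)>0$. Set $s:=\sigma_*-t$; then $s>T$ strictly, and $\varPhi_{-s}(\gamma(\sigma_*))=\gamma(t)$. Choose a bounded open set $V$ containing $\mathrm{conv}(\Theta_1\cup\Theta_2)$ and the $\beta_0(T+1)$‑neighbourhood of $D$, so that $X_t(\gamma)\in V$ and $z\in V$ for all $(z,\zeta)$ close to $\gamma(t)$. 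Finally apply \lemref{distm} on $V$ with $\tau>0$ so small that $\tau<\min(\rho/\beta_0,\sigma_*)$, producing constants $C>1$ and $\alpha>0$.

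Let $(z,\zeta)$ be admissible and close to $\gamma(t)$. Applying \lemref{distm} to the pair $\gamma(t),(z,\zeta)$ with time $s$ gives some $t'$ with $|t'-s|\le\tau$ and
\[
d\bigl(\varPhi_{t'}(z,\zeta),\gamma(t+t')\bigr)\le C^{t'}\,d\bigl((z,\zeta),\gamma(t)\bigr)^{\alpha}\le C^{s+\tau}\,d\bigl((z,\zeta),\gamma(t)\bigr)^{\alpha},
\]
where we used $\varPhi_{t'}(\gamma(t))=\gamma(t+t')$. Since $|(t+t')-\sigma_*|=|t'-s|\le\tau$ and the spatial component of the flow is $\beta_0$‑Lipschitz in time, $X_{t+t'}(\gamma)\in D$ with $d\bigl(X_{t+t'}(\gamma),D^c\bigr)\ge\rho-\beta_0\tau>0$. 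Hence, once $(z,\zeta)$ is close enough to $\gamma(t)$ that $C^{s+\tau}d((z,\zeta),\gamma(t))^{\alpha}<\rho-\beta_0\tau$, the spatial component of $\varPhi_{t'}(z,\zeta)$ lies in $D$. The broken bicharacteristic $\sigma\mapsto\varPhi_{\sigma+t'}(z,\zeta)$ then starts from $D$ (at $\sigma=0$) and equals $(z,\zeta)$ at $\sigma=-t'$; moreover $t'\ge s-\tau>T$, because $\tau<\sigma_*\le s-T$, so $-t'<-T$. Therefore $(z,\zeta)\in\mathcal{T}_T(D)$, which proves the claim. (That $\gamma(t)$ itself belongs to $\mathcal{T}_T(D)$ is also seen directly: $\sigma\mapsto\gamma(\sigma+\sigma_*)$ starts from $D$ and equals $\gamma(t)$ at time $t-\sigma_*<-T$.)

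The single genuine obstacle is that the billiard flow is discontinuous in time — in direction always, and in position at grazing incidences — so one cannot simply invoke continuity of $\varPhi_{-s}$. This is exactly why \lemref{distm} only controls the trajectories at a shifted time $t'$, and why the base point $\gamma(\sigma_*)$ must be chosen with a fixed positive margin $\rho$ inside $D$: the $\beta_0\tau$ uncertainty in time coming from that shift is absorbed by $\rho$ once $\tau$ is small relative to $\rho$, the only price being a larger, but finite, constant $C$.
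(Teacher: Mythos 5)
Your proof is correct, but the paper's own proof is much shorter and runs along a different line. The paper argues topologically: it asserts that $\mathcal{T}_{T}(D)$ is open ``because of the continuity of the flow'', notes that the orbit segment $\{\gamma(t): t\in[-T-1,-T]\}$ is compact and contained in $\mathcal{T}_{T}(D)$, and concludes that its distance to the closed complement is attained and positive (which in fact yields a bound uniform in $t$). Your argument instead proves the interiority of each $\gamma(t)$ directly and quantitatively: for every nearby admissible $(z,\zeta)$ you manufacture a witness broken bicharacteristic landing in $D$ after time $>T$, by evolving to the shifted time $t'$ given by \lemref{distm} (in the small-$\tau$ form of \remref{smallTAUbigC}), using a base point $\gamma(\sigma_*)$ chosen with a fixed margin $\rho$ inside the open cylinder $D$ to absorb both the $\beta_0\tau$ time-shift uncertainty and the Hölder divergence. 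What each approach buys: the paper's version is soft and fast but leans on the unelaborated claim that the spatial flow is continuous in $(x,\xi)$ (which is true, and is essentially a corollary of \lemref{hold12}, but is exactly the kind of statement that deserves the scrutiny you give it, since the directional component jumps and the flow is only Hölder, and only at a shifted time, near grazing); your version never needs to assert plain continuity at fixed time, trades the compactness argument for a pointwise one (which suffices for the stated conclusion), and produces an explicit radius $d((z,\zeta),\gamma(t))< \bigl((\rho-\beta_0\tau)/C^{s+\tau}\bigr)^{1/\alpha}$ in terms of the constants of \lemref{distm}. Both are valid; yours is heavier machinery for the same conclusion, but it makes explicit precisely the point — discontinuity of the broken flow in time and direction — that the paper's one-liner glosses over.
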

\begin{proof}
Notice that, because of the continuity of the flow, $\mathcal{T}_{T}(D)$
is open. Now, let $\gamma$ be a bicharacteristic starting from $D$
with speed in $[\alpha_{0},\beta_{0}]$. The set $\{\gamma(t),\ t\in[-T-1,-T]\}$
is compact and $\mathcal{T}_{T}(D)^{c}$ is closed, so the distance
between them is attained. But by definition of $\mathcal{T}_{T}(D)$,
these two sets never cross. Therefore, the proposition holds.
\end{proof}
The following crucial lemma is a consequence of \lemref{distm}
\begin{lem}
\label{lem:distsup}For all $D,\tilde{D}$, there exists $T^{\star}>0$,
$c>0$ such that for all $T\geq0$:
\begin{equation}
d(\mathcal{T}_{T-T^{\star}}(D)^{c},\mathcal{T}_{T}(D))\geq e^{-cT},\label{eq:distsupT}
\end{equation}
and, if $D\subset\tilde{D}$
\begin{equation}
d(\mathcal{T}_{T}(\tilde{D})^{c},\mathcal{T}_{T}(D))\geq\frac{1}{4}e^{-cT}d(\tilde{D}{}^{c},D).\label{eq:distsupD}
\end{equation}
\end{lem}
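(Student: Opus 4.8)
The plan is to transfer the divergence estimate of \lemref{distm} into a lower bound on how far a point of the trapped set must be from the complement of a ``stronger'' trapped set, reasoning by contradiction: if two points are too close, then their forward flow-outs must stay close (by \lemref{distm}), so if one survives in $D$ for a long time the other survives for almost as long, contradicting membership in the complement of the appropriate trapped set. First I would fix the open set $V$ containing the convex hull of $\Theta_1\cup\Theta_2$ supplied by \lemref{distm}, obtaining the constants $\alpha>0$, $C>0$, $\tau>0$; I set $c$ so that $C^{t'}\le e^{c_0 t'}$ for the relevant range and I will choose $T^\star$ a fixed multiple of $\tau$ (together with the time it takes a ray to enter and traverse $V$), to be pinned down below. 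Both claimed inequalities will then follow from the same mechanism, differing only in which ``target'' trapped set appears on the left.

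For \eqref{eq:distsupT}: take $(x,\xi)\in\mathcal{T}_T(D)$ and $(\tilde x,\tilde\xi)\in\mathcal{T}_{T-T^\star}(D)^c$, and suppose for contradiction that $d((x,\xi),(\tilde x,\tilde\xi))< e^{-cT}$. Since $(x,\xi)\in\mathcal{T}_T(D)$, there is a broken bicharacteristic $\gamma$ through $(x,\xi)$ and a time $t<-T$ with the relevant point lying in $D$; running the flow forward, $\Phi_{|t|}(x,\xi)$ lies in $D$ with $|t|>T$. Apply \lemref{distm} to the two initial points (after first flowing, if necessary, to bring them into $V$, which costs only a bounded time absorbed into $T^\star$): for the time $t'=|t|$ there is $t''$ with $|t''-|t||\le\tau$ and $d(\Phi_{t''}(\tilde x,\tilde\xi),\Phi_{t''}(x,\xi))\le C^{t''}d((x,\xi),(\tilde x,\tilde\xi))^\alpha\le e^{c_0 t''}e^{-\alpha c T}$. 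Choosing $c$ large enough (depending on $\alpha$, $c_0$, and the diameter of $D$) makes the right-hand side smaller than the distance from $\Phi_{t''}(x,\xi)$ — which is within $\tau$ of a point of $D$, hence in a fixed neighborhood of $D$ — to the complement of that neighborhood; so $\Phi_{t''}(\tilde x,\tilde\xi)\in D$ with $t''\ge|t|-\tau> T-\tau$. Taking $T^\star\ge\tau+(\text{entry time into }V)$ this exhibits $(\tilde x,\tilde\xi)$ as an element of $\mathcal{T}_{T-T^\star}(D)$, contradicting $(\tilde x,\tilde\xi)\in\mathcal{T}_{T-T^\star}(D)^c$.

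For \eqref{eq:distsupD}, with $D\subset\tilde D$: take $(x,\xi)\in\mathcal{T}_T(D)$ and $(\tilde x,\tilde\xi)\in\mathcal{T}_T(\tilde D)^c$, and suppose $d((x,\xi),(\tilde x,\tilde\xi))<\tfrac14 e^{-cT}d(\tilde D^c,D)$. Exactly as above, there is $t'$ comparable to the survival time of $(x,\xi)$ in $D$ with $d(\Phi_{t'}(\tilde x,\tilde\xi),\Phi_{t'}(x,\xi))\le C^{t'}d((x,\xi),(\tilde x,\tilde\xi))^\alpha$; here I would absorb the $d(\tilde D^c,D)$ factor into the estimate so that the right-hand side is at most $\tfrac12 d(\tilde D^c,D)$, which is strictly less than the distance from $\Phi_{t'}(x,\xi)\in D$ to $\tilde D^c$. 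Hence $\Phi_{t'}(\tilde x,\tilde\xi)\in\tilde D$ at a time $>T$, so $(\tilde x,\tilde\xi)\in\mathcal{T}_T(\tilde D)$, a contradiction.

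The main obstacle — and the point that needs the most care — is the mismatch between the continuous-time flow and the broken bicharacteristic/billiard picture: \lemref{distm} only gives control at a nearby time $t'$ (not at $t$ itself), because the directional component jumps at reflections (cf.\ Remark \remref{smallTAUbigC}). This is exactly why the statement loses a fixed amount $T^\star$ of time and why the neighborhoods of $D$ that appear must be taken slightly fattened (by $\tau\beta_0$, say) before applying the distance-to-complement comparison; one must check that this fattening, and the bounded time needed to flow an arbitrary point of phase space into $V$, can both be folded into a single constant $T^\star$ independent of $T$. The exponential constant $c$ then comes out as roughly $\alpha c_0$ plus a term controlling the diameters of $D$ and $\tilde D$, and the factor $\tfrac14$ in \eqref{eq:distsupD} is the slack left for the fattening and for the $\alpha$-th power.
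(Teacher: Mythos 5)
Your overall contradiction setup (take two nearby points, flow forward, invoke \lemref{distm}) is in the right spirit, but the mechanism by which you close the argument for \eqref{eq:distsupT} has a genuine gap, and you are missing the paper's essential geometric ingredient.

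The central problem is the time at which you compare the two flows. You apply \lemref{distm} at a time $t''$ close to the \emph{first entry time} $|t|$ of $(x,\xi)$ into $D$. But $(x,\xi)\in\mathcal{T}_T(D)$ only forces $|t|>T$; it gives no upper bound on $|t|$ in terms of $T$. Consequently the bound $C^{t''}d((x,\xi),(\tilde x,\tilde\xi))^\alpha\le e^{c_0 t''}e^{-\alpha cT}$ cannot be made small by enlarging the fixed constant $c$: the factor $e^{c_0 t''}$ grows at a rate unrelated to $T$, so for $|t|\gg T$ the right-hand side is not small no matter how large $c$ is. A secondary gap is the inference $\Phi_{t''}(\tilde x,\tilde\xi)\in D$: you only know that $\Phi_{t''}(x,\xi)$ lies within $\beta_0\tau$ of a point of $D$, so closeness places $\Phi_{t''}(\tilde x,\tilde\xi)$ in a slightly fattened neighborhood of $D$, not in $D$ itself; but membership in $\mathcal{T}_{T-T^\star}(D)$ requires entering $D$ itself. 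The same unbounded-time issue appears in your argument for \eqref{eq:distsupD}, where you again use ``a time comparable to the survival time'' rather than $T$.

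The paper closes the argument in the opposite direction, and this is precisely what your proposal is missing. It applies \lemref{distm} at a \emph{controlled} time $T_n$ to get $d(\Phi_{T_n'}(\tilde x_n,\tilde\xi_n),\Phi_{T_n'}(x_n,\xi_n))\to 0$, and then uses that $D$ is a cylinder about the hyperbolic periodic ray to prove an escape estimate: a ray that does not enter $D$ for all $t\ge T_n-T^\star$ must, at time $T_n'$, be at distance at least $c(T^\star)$ from $D$, with $c(T^\star)\to\infty$ as $T^\star\to\infty$. Since $\Phi_{T_n'}(x_n,\xi_n)$ is within $\beta_0\tau$ of $D$, choosing $T^\star$ large makes $d(\Phi_{T_n'}(\tilde x_n,\tilde\xi_n),\Phi_{T_n'}(x_n,\xi_n))\ge 1$, contradicting the convergence to $0$. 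Thus in the paper $T^\star$ plays an essential \emph{dynamical} role (it is taken large to exploit hyperbolic escape), whereas in your plan $T^\star$ merely absorbs bookkeeping slack; without the escape estimate there is nothing to contradict, and enlarging $c$ cannot substitute for it.
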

\begin{proof}
Let us show (\ref{eq:distsupT}). Let $T^{\star}>0$. We argue by
contradiction. Suppose that the property is false. Then, for all $n\geq1$
there exists $T_{n}\geq0$, $(x_{n},\xi_{n})\in\mathcal{T}_{T_{n}}(D)$
and $(\tilde{x}_{n},\tilde{\xi}_{n})\in\mathcal{T}_{T_{n}-T^{\star}}(D)^{c}$
such that
\begin{equation}
d((\tilde{x}_{n},\tilde{\xi}_{n}),(x_{n},\xi_{n}))\leq e^{-nT_{n}}.\label{eq:contra}
\end{equation}
By \lemref{distm}, there exists $T_{n}'\in[T_{n}-\tau,T_{n}+\tau]$
such that 
\[
d(\varPhi_{T_{n}'}(\tilde{x}_{n},\tilde{\xi}_{n}),\varPhi_{T_{n}'}(x_{n},\xi_{n})))\leq d((\tilde{x}_{n},\tilde{\xi}_{n}),(x_{n},\xi_{n}))^{\alpha}C^{T_{n}'},
\]
then, because of (\ref{eq:contra}),
\begin{equation}
d(\varPhi_{T_{n}'}(\tilde{x}_{n},\tilde{\xi}_{n}),\varPhi_{T_{n}'}(x_{n},\xi_{n})))\leq e^{-\alpha nT_{n}}C^{T_{n}+\tau}\longrightarrow0\label{eq:l37tozero}
\end{equation}
as $n$ goes to infinity.

We would like to know how far $\Phi_{T_{n}'}(\tilde{x}_{n},\tilde{\xi}_{n})$
can be from $D$. Let 
\[
(y_{n},\eta_{n})=\Phi_{T_{n}'-T^{\star}+\tau}(\tilde{x}_{n},\tilde{\xi}_{n}).
\]
By definition of the trapped set, $\varPhi_{t}(\tilde{x}_{n},\tilde{\xi}_{n})\notin D$
for all $t\geq T_{n}-T^{\star}$. In particular, this is true for
all $t\geq T_{n}^{'}-T^{\star}+\tau$. Thus, the ray starting from
$(y_{n},\eta_{n})$ never cross $D$. Therefore, because we have chosen
$D$ to be a cylinder with the periodic trajectory for axis, $(y_{n},\eta_{n})$
is in the case where $\Phi_{T_{n}'}(\tilde{x}_{n},\tilde{\xi}_{n})=\Phi_{T^{\star}-\tau}(y_{n},\eta_{n})$
is the closest to $D$, such that (\figref{wcs}):
\begin{itemize}
\item $\eta_{n}$ is parallel to the periodic trajectory,
\item $|\eta_{n}|=|\xi_{n}|=\alpha_{0}$,
\item $y_{n}\in\partial\Theta_{i}$ 
\end{itemize}
\begin{figure}

\includegraphics[scale=0.25]{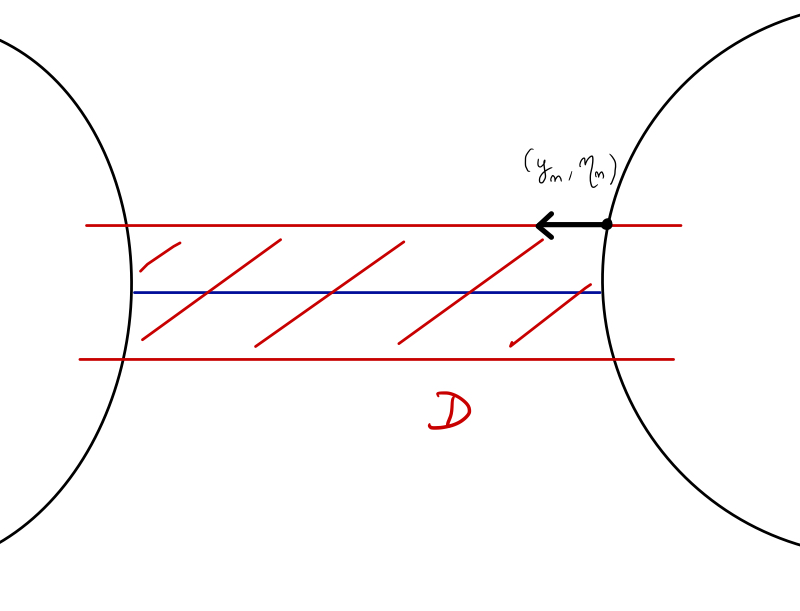}\caption{\label{fig:wcs}$(y_{n},\eta_{n})$ in the worst case scenario}

\end{figure}
in this worst case scenario, after time $T^{\star}-\tau$, $\Phi_{T_{n}}(\tilde{x}_{n},\tilde{\xi}_{n})=\Phi_{T^{\star}-\tau}(y_{n},\eta_{n})$
is at a distance at least $c(T^{\star})>0$ of $D$, with $c(T^{\star})$
depending only of $D$, the minimal curvature of the obstacles, $\alpha_{0}$,
$\beta_{0}$, and such that $c(T^{\star})\rightarrow\infty$ as $T^{\star}\rightarrow\infty$.
On the other hand, $\varPhi_{T_{n}}(x_{n},\xi_{n})\in D$, so, because
$|T'_{n}-T_{n}|\leq\tau$, $\Phi_{T'_{n}}(x_{n},\xi_{n})$ is at distance
at most $\beta_{0}\tau$ of $D$. We can thus choose $T^{\star}>0$
large enough so that for all $n\geq0$
\begin{equation}
d(\varPhi_{T'_{n}}(\tilde{x}_{n},\tilde{\xi}_{n}),\varPhi_{T'_{n}}(x_{n},\xi_{n}))\geq1.\label{eq:toco}
\end{equation}
which contradicts (\ref{eq:l37tozero}) and (\ref{eq:distsupT}) holds.

Let us now show (\ref{eq:distsupD}). If $(x,\xi)\in\mathcal{T}_{T}(\tilde{D})^{c}$
and $(\tilde{x},\tilde{\xi})\in\mathcal{T}_{T}(D)$ we have $\Phi_{T}(x,\xi)\notin\tilde{D}$
and $\Phi_{T}(\tilde{x},\tilde{\xi})\in D$, therefore 
\begin{equation}
d(\Phi{}_{T}(x,\xi),\Phi_{T}(\tilde{x},\tilde{\xi}))>d(\tilde{D}^{c},D).\label{eq:distsupD1}
\end{equation}
On the other hand, by \lemref{distbic}, there exist $T'$ such that
$|T-T'|\leq\tau$ verifying
\begin{equation}
d(\Phi{}_{T'}(x,\xi),\Phi{}_{T'}(\tilde{x},\tilde{\xi}))\leq C^{T'}d((x,\xi),(\tilde{x},\tilde{\xi})).\label{eq:distsuppD2}
\end{equation}
According to Remark \remref{smallTAUbigC}, we can suppose, up to
enlarge the constant $C$, that $\tau$ is small enough so that $\beta_{0}\tau\leq\frac{1}{4}d(\tilde{D}^{c},D)$.
Then (\ref{eq:distsupD1}) implies
\begin{equation}
d(\Phi{}_{T'}(x,\xi),\Phi_{T'}(\tilde{x},\tilde{\xi}))\geq\frac{1}{2}d(\tilde{D}^{c},D).\label{eq:distsuppD3}
\end{equation}
Therefore, by (\ref{eq:distsuppD2}) and (\ref{eq:distsuppD3}), if
$d((x,\xi),(\tilde{x},\tilde{\xi}))\leq\frac{1}{4}C^{-T}d(\tilde{D}^{c},D)$,
we cannot have $(x,\xi)\in\mathcal{T}_{T}(\tilde{D})^{c}$ and $(\tilde{x},\tilde{\xi})\in\mathcal{T}_{T}(D)$.
Thus (\ref{eq:distsupD}) is verified.
\end{proof}

\subsection{Reduction of the problem}

We are now in position to show that we can reduce ourselves to prove
the following proposition:
\begin{prop}
\label{prop:trapset}There exists $\epsilon>0$, a small open neighborhood
$D$ of the trapped ray, such that, $\forall u_{0}\in L^{2}$ microlocally
supported in $\mathcal{\hat{T}}_{2\epsilon|\log h|}(D)$, and away
from a small enough neighborhood of $\partial\left(\Theta_{1}\cup\Theta_{2}\right)$,
we have for all $\chi\in C_{0}^{\infty}$ supported in $D$
\begin{gather}
\Vert\chi e^{-it\Delta}\psi(-h^{2}\Delta)u_{0}\Vert_{L^{p}(0,\epsilon h|\log h|)L^{q}}\leq C\Vert\psi(-h^{2}\Delta)u_{0}\Vert_{L^{2}}.\label{eq:redSec3}
\end{gather}
\end{prop}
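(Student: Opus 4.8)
The plan is to prove (\ref{eq:redSec3}) by constructing an explicit approximate solution of the Dirichlet problem for data microlocalized in the thin logarithmic trapped tube $\hat{\mathcal{T}}_{2\epsilon|\log h|}(D)$, following the geometric-optics parametrix of \cite{Ikawa2,IkawaMult} in the form adapted to the Schr\"odinger flow by \cite{plaques}, and then estimating that approximate solution term by term. Since $\psi(-h^{2}\Delta)u_{0}$ is localized at frequencies $\sim h^{-1}$ and away from a neighborhood of $\partial(\Theta_{1}\cup\Theta_{2})$, I would first write $e^{-it\Delta_{D}}\psi(-h^{2}\Delta)u_{0}$, up to negligible error, as a semiclassical oscillatory integral, and then generate the reflected field as in Ikawa: the $N$-times reflected term $U_{N}$ is an oscillatory integral whose phase is the action along the broken ray reflected $N$ times, whose amplitude solves the transport equations together with the reflection law $a_{\mathrm{out}}=-a_{\mathrm{in}}$ on $\partial\Theta_{1}\cup\partial\Theta_{2}$, and which is microlocally supported in a tube around the corresponding reflected-ray family. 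Because there are only two strictly convex obstacles, any ray staying near $\mathcal{R}$ reflects in the forced alternating pattern, so for each starting obstacle $i\in\{1,2\}$ there is a single term $U_{N}^{i}$ per number of reflections $N$; and on $[0,\epsilon h|\log h|]$ at most $N\le C\epsilon|\log h|$ reflections occur. The approximate solution near the ray is then the finite sum $v_{h}=\sum_{i=1,2}\sum_{0\le N\le C\epsilon|\log h|}U_{N}^{i}$.

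The second step is to show that $\chi v_{h}$ approximates $\chi e^{-it\Delta_{D}}\psi(-h^{2}\Delta)u_{0}$ with error $o(\Vert\psi(-h^{2}\Delta)u_{0}\Vert_{L^{2}})$ on $[0,\epsilon h|\log h|]$ in $L^{\infty}L^{2}$, hence, on this short interval or by the trivial Strichartz inequality, in $L^{p}L^{q}$. Here the hypothesis on the microlocal support of $u_{0}$ is used decisively: by the billiard-flow estimates of the previous subsection, in particular \lemref{distm} and \lemref{distsup}, every ray issued from the microlocal support of $\psi(-h^{2}\Delta)u_{0}$ stays in $D$ throughout $[0,\epsilon h|\log h|]$ and follows the alternating reflection pattern, so the reflection patterns not retained in $v_{h}$ contribute only $O(h^{\infty})$ — the corresponding geometric-optics terms are supported off the support of $\chi$, or their phase is non-stationary on the symbol of $u_{0}$. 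The standard Borel summation of the transport amplitudes makes the equation and boundary errors $O(h^{\infty})$ at each reflection, and since there are only $O(|\log h|)$ reflections the total source term is still $O(h^{\infty})$; Duhamel's formula and the energy estimate then convert it into a negligible $L^{\infty}L^{2}$ error.

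The third step, the heart of the matter, is the quantitative bound
\[
\Vert\chi U_{N}^{i}\Vert_{L^{p}(0,\epsilon h|\log h|)L^{q}(\Omega)}\le C\,\theta^{N}\,\Vert\psi(-h^{2}\Delta)u_{0}\Vert_{L^{2}}
\]
for some $\theta\in(0,1)$ depending only on the obstacles; granting it, summing the geometric series over $N$ and over $i=1,2$ gives (\ref{eq:redSec3}). Each $\chi U_{N}^{i}$ is a semiclassical Fourier integral operator applied to $\psi(-h^{2}\Delta)u_{0}$, so the non-degeneracy of its generating phase together with the dispersive bound $\Vert e^{-it\Delta}\Vert_{L^{1}\to L^{\infty}}\lesssim|t|^{-3/2}$ for the underlying kernel, followed by the $TT^{\star}$ and Christ--Kiselev argument (valid for $p>2$, which is why the endpoint is excluded), gives $\Vert\chi U_{N}^{i}\Vert_{L^{p}L^{q}}\lesssim\Vert a_{N}\Vert\,\Vert\psi(-h^{2}\Delta)u_{0}\Vert_{L^{2}}$ with $a_{N}$ the suitably normalized amplitude after $N$ reflections. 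The decay factor $\theta^{N}$ is then Ikawa's key computation: at each reflection off a strictly convex obstacle the reflected wavefront is strictly more curved than the incoming one, so along the alternating pattern between the two convex bodies the successive amplitude normalizations contract by a fixed factor per reflection — the same mechanism behind the exponential decay of the scattering in \cite{Ikawa2,IkawaMult}, and uniform here because the microlocal support of $u_{0}$ lies in the $h^{c\epsilon}$-tube $\hat{\mathcal{T}}_{2\epsilon|\log h|}(D)$ around the hyperbolic periodic orbit $\mathcal{R}$.

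The main obstacle is precisely this last step: controlling the $N$-times reflected amplitudes and phases uniformly in $N$ and $h$. One must track the Jacobian of the reflected-ray map — equivalently, the evolution of the second fundamental form of the successive wavefronts under reflection — to obtain both the contraction $\theta<1$ and the fact that the constant in the single-term dispersive and Strichartz estimate does not deteriorate with $N$, despite the increasingly curved, nearly focusing wavefronts and the caustics they produce; strict convexity of $\Theta_{1}$ and $\Theta_{2}$ enters exactly here, as in \cite{Ikawa2,IkawaMult,plaques}. The smallness of $\epsilon$ keeps $N=O(\epsilon|\log h|)$ below the Ehrenfest time, and \lemref{distsup} guarantees that the logarithmic trapped tube $\hat{\mathcal{T}}_{2\epsilon|\log h|}(D)$ is stable under $h^{c\epsilon}$-perturbations, which is what makes the whole construction and all error terms meaningful.
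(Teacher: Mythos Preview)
Your outline is correct and follows the same strategy the paper carries out in Sections~4 and~5: an Ikawa-type Neumann series of reflected geometric-optics terms, the exponential contraction $\lambda^{|J|}$ of the amplitudes coming from strict convexity (Proposition~\ref{prop:convL}/\ref{prop:essbouds}), the observation that only $O(\epsilon|\log h|)$ reflections contribute on the time interval in question, and a $TT^{\star}$ reduction to a dispersive bound.

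There is one organizational difference worth noting. You propose to prove a Strichartz bound $\Vert\chi U_{N}^{i}\Vert_{L^{p}L^{q}}\le C\theta^{N}\Vert\psi(-h^{2}\Delta)u_{0}\Vert_{L^{2}}$ for each reflected term and then sum the geometric series. The paper instead applies $TT^{\star}$ first, reducing everything to the single pointwise dispersive estimate $|\chi_{+}e^{-ith\Delta}\delta^{y}_{\epsilon,h,N}|\lesssim (ht)^{-3/2}$ for the kernel of the microlocal cutoff, and then proves that estimate by stationary phase in $\xi$ on the full sum $S_{K}=\sum_{J}w^{J}$. This buys something concrete: the stationary-phase remainders involve $\xi$-derivatives of the amplitudes, which grow like $C^{|J|}$ (Corollary~\ref{cor:boundsdirect}) rather than decaying like $\lambda^{|J|}$, so one cannot sum them as a geometric series; the paper absorbs this growth by $C^{|J|}\le C^{t/c_{1}}\le h^{-\eta\epsilon}$ and then takes $\epsilon$ small. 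In your per-term formulation this same issue appears as the constant in the single-term dispersive/Strichartz estimate deteriorating with $N$ --- exactly the difficulty you flag in your last paragraph --- and the resolution is the same: the loss is at worst $h^{-\eta\epsilon}$, not genuinely exponential in $N$, once $N\le C\epsilon|\log h|$. Either packaging works, but be aware that the clean $\theta^{N}$ bound holds only for the leading amplitude, not for the full stationary-phase expansion.
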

\begin{proof}[Proof of Proposition \propref{trapset} implies \thmref{main}]
We suppose that Proposition \propref{trapset} holds and show our
main result. By the work of the previous section and the semi-classical
change of variable, it suffises to show that there exists $\epsilon'>0$
such that
\begin{equation}
\Vert\chi e^{-ith\Delta_{D}}\psi(-h^{2}\Delta)u_{0}\Vert_{L^{p}(0,\epsilon'|\log h|)L^{q}(\Omega)}\leq Ch^{-1/p}\Vert u_{0}\Vert_{L^{2}},\label{eq:semibultult}
\end{equation}
for $\chi$ supported in a neighborhood of the trapped ray.

We are here inspired by \cite{MR2672795}, Section 2, p.265-266. As
\cite{MR2672795} recalls, by the semiclassical finite speed of propagation
due to \cite{Lebeau}, we can restrict ourselves to bicharacteristic
travelled at speed in $[\alpha_{0},\beta_{0}]$. We refer to \cite{Lebeau},
\cite{plaques} Appendix B, and \cite{MR2672795} Appendix A for the
propagation properties of the Schr\"{o}dinger flow in the semi-classical
regime for the problem with boundaries.

Let $D_{1}$, $D_{2}$, $D_{3}$ be open neighborhoods of the trapped
ray such that
\[
D_{3}\subsetneq D_{2}\subsetneq D_{1}\subsetneq D.
\]
Let $T=2\epsilon|\log h|$. We will show that, for $\chi$ supported
in $D_{3}$ and for some $\epsilon_{3}<\epsilon_{2}<\epsilon_{1}<\epsilon$:
\begin{enumerate}
\item if (\ref{eq:semibultult}) holds for all data microlocally supported
in $D_{1}\cap\mathcal{T}_{T+1}(D_{1})$ with $\epsilon'=\epsilon_{1}$
then it holds for all data supported in $D_{2}$ with $\epsilon'=\epsilon_{2}$,
\item there exists a small neighborhood of the boundary $\mathcal{V}$ such
that, if (\ref{eq:semibultult}) holds for all data microlocally supported
in $D\cap\mathcal{T}_{T}(D)\cap\mathcal{V}^{c}$ with $\epsilon'=\epsilon$
then it holds for all data supported in $D_{1}\cap\mathcal{T}_{T+1}(D_{1})$
with $\epsilon'=\epsilon_{1}$,
\item if (\ref{eq:semibultult}) holds for all data supported in $D_{2}$
with $\epsilon'=\epsilon_{2}$ then it holds for all data with $\epsilon'=\epsilon_{3}$,
\end{enumerate}
and the proposition will follow.

\textbf{(1) Reduction to the trapped sets.} Let $\psi(-h^{2}\Delta)u_{0}$
be supported in $D_{2}$. By the semiclassical finite speed of propagation,
there exists $\tau>0$ small enough such that, modulo $O(h^{\infty})$
non contributing terms, $e^{-ith\Delta}\psi(-h^{2}\Delta)u_{0}$ is
supported in $D_{1}$ for all $t\in[-\tau,0]$.

Let $T^{\star}$ be given by \lemref{distsup}. Let $a\in C^{\infty}(\mathbb{R}^{n}\times\mathbb{R}^{n})$
be such that $a=1$ in $\mathcal{T}_{T+1+T^{\star}}(D_{1})$ and $a=0$
outside $\mathcal{T}_{T+1}(D_{1})$. For the convenience of the reader,
let us denote 
\[
\hat{T}:=T+1+T^{\star}.
\]
$1-a$ is supported in $\mathcal{T}_{\hat{T}}(D_{1})^{c}$ , so, every
bicharacteristic $\gamma$ starting from $D_{1}$ at time $t=0$ with
speed in $[\alpha_{0},\beta_{0}]$ verifies by \lemref{distbic}
\begin{equation}
d(\gamma(t),\text{Supp}(1-a))>0\ \forall t\in[-\hat{T}-\tau,-\hat{T}].\label{eq:np}
\end{equation}

Let $\Psi\in C^{\infty}(\mathbb{R})$ such that $\Psi(t)=0$ for $t\leq-\tau$,
$\Psi(t)=1$ for $t\geq0$, and $\Psi'\geq0$, and set $\Psi_{T}(t)=\Psi(t+\hat{T})$.
We define 
\[
w(t,x)=\Psi_{T}(t)e^{-i(t+\hat{T})h\Delta}\psi(-h^{2}\Delta)u_{0}.
\]
Then $w$ satisfies
\begin{eqnarray*}
i\partial_{t}w-h\Delta w & = & i\Psi_{T}'(t)e^{-i(t+\hat{T})h\Delta}\psi(-h^{2}\Delta)u_{0}\\
w_{|\partial\Omega} & = & 0,\ w|_{t\leq-\hat{T}-\tau}=0
\end{eqnarray*}
by the Duhamel formula, as $w(t)=e^{-i\hat{T}h\Delta}u(t):=e^{-i(t+\hat{T})h\Delta}\psi(-h^{2}\Delta)u_{0}$
for $t\geq-\hat{T}$ and as $\Psi'_{T}$ is supported in $[-\hat{T}-\tau,-\hat{T}]$,
we have, for $t\geq-\hat{T}$ 
\[
e^{-i\hat{T}h\Delta}u(t,x)=\int_{-\hat{T}-\tau}^{-\hat{T}}e^{-i(t-s)h\Delta}i\Psi_{T}'(s)e^{-i(s+\hat{T})h\Delta}\psi(-h^{2}\Delta)u_{0}ds.
\]
Denote, for $Q\in\{\text{Op}(a),\text{Op}(1-a)\}$
\[
u_{Q}(t,x)=\int_{-\hat{T}-\tau}^{-\hat{T}}e^{-i(t-s)h\Delta}i\Psi_{T}'(s)Q\left(e^{-i(s+\hat{T})h\Delta}\psi(-h^{2}\Delta)u_{0}\right)ds.
\]
which is solution of
\[
i\partial_{t}u_{Q}-h\Delta u_{Q}=i\Psi_{T}'(t)Q\left(e^{-i(t+\hat{T})h\Delta}\psi(-h^{2}\Delta)u_{0}\right).
\]
We will see that the term $\chi u_{1-A}$ does not contribute. To
this purpose, we use the $b$-wave front set of $u_{1-A}$, $WF_{b}(u_{1-A})$.
We refer to \cite{MR2672795} for the definition of this notion. By
proposition A.8 of \cite{MR2672795}, if $\rho_{0}\in WF_{b}(u_{1-A})$,
then the broken characteristic starting from $\rho_{0}\in WF_{b}(u_{1-A})$
must intersect the wave front set
\begin{equation}
WF_{b}(\text{Op}(1-a)e^{-i\hat{T}h\Delta}u)\cap\{t\in[-\hat{T}-\tau,-\hat{T}]\}.\label{eq:wfs}
\end{equation}
We are interested in estimating $u$ only on $D_{3}$ so it is enough
to consider $\rho_{0}\in WF_{b}(\chi u_{1-A})$. In particular, if
$\gamma$ is a broken characteristic starting from $\rho_{0}\in WF_{b}(\chi u_{1-A})$,
there exists $t\in[-\hat{T}-\tau,-\hat{T}]$ such that $\gamma(t)$
intersect (\ref{eq:wfs}). Because of (\ref{eq:np}), this is not
possible, and
\begin{equation}
\Vert\chi u_{1-A}\Vert_{H^{\sigma}(\Omega\times R)}=O(h^{\infty})\Vert\psi(-h^{2}\Delta)u_{0}\Vert_{L^{2}}\label{eq:red3noncont}
\end{equation}
for all $\sigma\geq0$, and this term does not contribute.

On the other hand, remark that
\begin{multline*}
\Vert\chi u_{A}\Vert_{L^{p}(-\hat{T},-\hat{T}-\tau+T_{1})L^{q}}\\
\leq\int_{-\hat{T}-\tau}^{-\hat{T}}\Vert\chi e^{-i(t-s)h\Delta}\Psi_{T}'(s)A\left(e^{-i(s+\hat{T})h\Delta}\psi(-h^{2}\Delta)u_{0}\right)\Vert_{L^{p}(-\hat{T},-\hat{T}-\tau+T_{1})L^{q}}ds\\
=\int_{-\hat{T}-\tau}^{-\hat{T}}\Vert\chi e^{-ith\Delta}\Psi_{T}'(s)A\left(e^{-i(s+\hat{T})h\Delta}\psi(-h^{2}\Delta)u_{0}\right)\Vert_{L^{p}(-\hat{T}-s,-\hat{T}-\tau+T_{1}-s)L^{q}}ds\\
\leq\int_{-\hat{T}-\tau}^{-\hat{T}}\Vert\chi e^{-ith\Delta}\Psi_{T}'(s)A\left(e^{-i(s+\hat{T})h\Delta}\psi(-h^{2}\Delta)u_{0}\right)\Vert_{L^{p}(0,T_{1})L^{q}}ds.
\end{multline*}
Thus, if Strichartz estimates (\ref{eq:redSec3}) in time $T_{1}=\epsilon_{1}|\log h|$
hold true for any data microlocally supported where $a\neq0$ and
spatially supported in $D_{1}$, that is in $\mathcal{T}_{T+1}(D_{1})\cap D_{1}$,
we have, because $\Psi_{T}'(s)A\left(e^{-i(s+\hat{T})h\Delta}\psi(-h^{2}\Delta)u_{0}\right)$
is, for $s\in[-\hat{T}-\tau,-\hat{T}]$ and modulo non contributing
terms, such a data
\begin{multline*}
\Vert\chi u_{A}\Vert_{L^{p}(-\hat{T},-\hat{T}-\tau+T_{1})L^{q}}\lesssim h^{-1/p}\int_{-\hat{T}-\tau}^{-\hat{T}}\Vert\Psi_{T}'(s)A\left(e^{-i(s+\hat{T})h\Delta}\psi(-h^{2}\Delta)u_{0}\right)\Vert_{L^{2}}ds\\
\leq h^{-1/p}\int_{-\hat{T}-\tau}^{-\hat{T}}\Psi_{T}'(s)\Vert\text{Op}(a)\Vert_{L^{2}\rightarrow L^{2}}\Vert e^{-i(s+\hat{T})h\Delta}\psi(-h^{2}\Delta)u_{0}\Vert_{L^{2}}ds\\
=h^{-1/p}\int_{-\hat{T}-\tau}^{-\hat{T}}\Psi_{T}'(s)\Vert\text{Op}(a)\Vert_{L^{2}\rightarrow L^{2}}\Vert\psi(-h^{2}\Delta)u_{0}\Vert_{L^{2}}ds\\
=h^{-1/p}\Vert\text{Op}(a)\Vert_{L^{2}\rightarrow L^{2}}\Vert\psi(-h^{2}\Delta)u_{0}\Vert_{L^{2}}.
\end{multline*}
Where we used that $\int_{-\hat{T}-\tau}^{-\hat{T}}\Psi_{T}'(s)ds=1.$
And therefore, because of (\ref{eq:red3noncont})
\[
\Vert\chi e^{-i\hat{T}h\Delta}u\Vert_{L^{p}(-\hat{T},-\hat{T}-\tau+T_{1})L^{q}}\lesssim h^{-1/p}\Vert\text{Op}(a)\Vert_{L^{2}\rightarrow L^{2}}\Vert\psi(-h^{2}\Delta)u_{0}\Vert_{L^{2}}.
\]
It remains to estimate $\Vert\text{Op}(a)\Vert_{L^{2}\rightarrow L^{2}}$.
We have, according to \cite{semibook}:
\begin{multline*}
\Vert\text{Op}(a)\Vert_{L^{2}\rightarrow L^{2}}\lesssim\sup|a|+h^{1/2}\sum_{|\alpha|\leq2n+1}\sup|\partial^{|\alpha|}a|\leq1+h^{1/2}\sum_{|\alpha|\leq2n+1}\sup|\partial^{|\alpha|}a|.
\end{multline*}
By \lemref{distsup}, $a$ can be chosen in such a way that
\[
|\partial^{|\alpha|}a|\leq2e^{c|\alpha|T}
\]
so, for $T=2\epsilon|\log h|$: 
\[
|\partial^{|\alpha|}a|\lesssim h^{-2c|\alpha|\epsilon}
\]
we take $\epsilon>0$ small enough so that $2c(2n+1)\epsilon\leq\frac{1}{2}$
and we get
\[
\Vert\chi e^{-i\hat{T}h\Delta}u\Vert_{L^{p}(-\hat{T},-\hat{T}-\tau+T_{1})L^{q}}\lesssim h^{-1/p}\Vert\psi(-h^{2}\Delta)u_{0}\Vert_{L^{2}},
\]
that is

\[
\Vert\chi e^{-ith\Delta}\psi(-h^{2}\Delta)u_{0}\Vert_{L^{p}(0,T_{1}-\tau)L^{q}}\lesssim h^{-1/p}\Vert\psi(-h^{2}\Delta)u_{0}\Vert_{L^{2}},
\]
and thus
\[
\Vert\chi e^{-ith\Delta}\psi(-h^{2}\Delta)u_{0}\Vert_{L^{p}(0,\epsilon_{2}|\log h|)L^{q}}\lesssim h^{-1/p}\Vert\psi(-h^{2}\Delta)u_{0}\Vert_{L^{2}}
\]
for $\epsilon_{2}<\epsilon_{1}$ small enough.

\textbf{(2) Restriction to data supported away from the boundary.}
Let $\eta>0$. We remark that there exists $t_{1}(\eta)>0$, $t_{2}(\eta)>0$
such that $t_{1,2}(\eta)\rightarrow0$ as $\eta\rightarrow0$ and,
if we denote
\[
E_{1}(\eta)=\left\{ d(x,\partial(\Theta_{1}\cup\Theta_{2})<\eta\right\} ,\ E_{2}(\eta)=\left\{ d(x,\partial(\Theta_{1}\cup\Theta_{2})\geq\eta\right\} ,
\]
then all bicharacteristic starting from $E_{1}\cap D$ with speed
in $[\alpha_{0},\beta_{0}]$ verifies
\[
d(\gamma(t),\mathcal{V}\cap D)>0\ \forall t\in[-2t_{1},-t_{1}],
\]
and all bicharacteristic starting from $E_{2}\cap D$ with speed in
$[\alpha_{0},\beta_{0}]$ verifies
\[
d(\gamma(t),\mathcal{V}\cap D)>0\ \forall t\in[-t_{2},0],
\]
with $\mathcal{V}(\eta)$ a small neighborhood of the boundary. 

Let $\psi(-h^{2}\Delta)u_{0}$ microlocally supported in $\mathcal{T}_{T+1}(D_{1})\cap\left\{ D_{1}\times\mathbb{R}^{n}\right\} $.
Let $\tau_{0}(\eta)=\max(2t_{1},t_{2})$. We take $\eta>0$ sufficently
small depending only of $D$ and $D_{1}$ so that $\tau_{0}\leq1/2$
and modulo non contributing terms, $e^{-ith\Delta}\psi(-h^{2}\Delta)u_{0}$
is microlocally supported in $\mathcal{T}_{T+1-2\tau_{0}}(D_{1})\cap\left\{ D\times\mathbb{R}^{n}\right\} $
for all $t\in[-\tau_{0},0]$, and thus in $\mathcal{T}_{T}(D)\cap\left\{ D\times\mathbb{R}^{n}\right\} $
for all $t\in[-\tau_{0},0]$.

We define $\chi_{b}\in C_{0}^{\infty}$ such that $\chi_{b}(x)=1$
if $x\in E_{1}(\eta/2)$ and $\chi_{b}(x)=0$ if $x\notin E_{1}(\eta)$.
We estimate $\chi\chi_{b}u$ on the one hand with the previous strategy
and the translation on $[-2t_{1},-t_{1}]$, and $\chi(1-\chi_{b})u$
on the other hand with the translation $[-t_{2},0]$.

\textbf{(3) Reduction to data supported in $D$.} We remark that for
all bicharacteristic starting from $D_{3}\subsetneq D_{2}$ with speed
in $[\alpha_{0},\beta_{0}]$, we have for $t_{0}\geq0$ small enough
\[
d(\gamma(t),D_{2}^{c})>0,\ \forall t\in[0,t_{0}],
\]
and we follow the previous strategies.
\end{proof}

\section{Approximation of the solution near the trapped set}

\subsection{Phase functions}

Following the works of Iwaka \cite{Ikawa2,IkawaMult} and Burq \cite{plaques},
we define the phase functions and their reflected phases in the following
way. 

We call $\varphi:\mathcal{U}\rightarrow\mathbb{R}$ a phase function
on the open set $\mathcal{U}\subset\mathbb{R}^{3}$ if $\varphi$
is $C^{\infty}$ on $\mathcal{U}$ and verifies $|\nabla\varphi|=1$.
We say that $\varphi$ verifies $(P)$ on $\partial\Theta_{p}$ if
\begin{enumerate}
\item The principal curvatures of the level surfaces of $\varphi$ with
respect to $-\nabla\varphi$ are non-negative in every point of $\mathcal{U}$,
\item We have, for $j\neq p$
\[
\Theta_{j}\subset\{y+\tau\nabla\varphi(x)\ \text{s.t.}\ \tau\geq0,y\in\mathcal{U}\cap\partial\Theta_{p},\nabla\varphi(y)\cdot n(y)\geq0\},
\]
\item For all $A\in\mathbb{R}$, the set $\{\varphi\leq A\}$ is empty or
convex.
\end{enumerate}
Let $\delta_{1}\geq0$ and $\varphi$ be a phase function. We set

\begin{align*}
\Gamma_{p}(\varphi) & =\{x\in\partial\Theta_{p}\ \text{s.t.}\ -n(x)\cdot\nabla\varphi(x)\geq\delta_{1}\},\\
\mathcal{U}_{p}(\varphi) & =\underset{X^{1}(x,\nabla\varphi(x))\in\Gamma_{p}(\varphi)}{\bigcup}\{X^{1}(x,\nabla\varphi(x))+\tau\Xi(x,\nabla\varphi(x)),\ \tau\geq0\}.
\end{align*}
Then, there exists $\delta_{1}\geq0$ such that, if $\varphi$ is
a phase function verifying $(P\text{)}$ on $\partial\Theta_{p}$,
we can define the reflected phase $\varphi_{j}$ on the obstacle $\Theta_{j}$
on the open set $\mathcal{U}_{j}(\varphi)$, verifying $(P)$ on $\partial\Theta_{j}$,
by the following relation, for $X^{1}(x,\nabla\varphi(x))\in\Gamma_{p}(\varphi)$:
\[
\varphi_{j}(X^{1}(x,\nabla\varphi)+\tau\Xi^{1}(x,\nabla\varphi))=\varphi(X^{1}(x,\nabla\varphi))+\tau.
\]
Note in particular the following simple but fondamental fact, simply
differentiating this relation with respect to $\tau$, for all $x$
such that $X^{1}(x,\nabla\varphi(x))\in\Gamma_{p}(\varphi)$:
\[
\nabla\varphi_{j}(X^{1}(x,\nabla\varphi)+\tau\Xi^{1}(x,\nabla\varphi))=\Xi^{1}(x,\nabla\varphi).
\]

We call a finite sequence $J=(j_{1},\cdots,j_{n})$, $j_{i}\in\{1,2\text{\}}$
with $j_{i}\neq j_{i+1}$ a story of reflections, and will denote
$\mathcal{I}$ the set of all the stories of reflection. By induction,
we can define the phases $\varphi_{J}$ for any $J\in\mathcal{I}$,
on the sets $\mathcal{U}_{J}(\varphi)$.

For $f\in C^{\infty}(\mathcal{U})$ and $m\in\mathbb{N}$, let 
\[
|f|_{m}(\mathcal{U})=\max_{(a_{i})\in(\mathcal{S}^{2})^{m}}\sup_{\mathcal{U}}|(a_{1}\cdot\nabla)\cdots(a_{m}\cdot\nabla)f|.
\]
We recall the following estimate due to \cite{Ikawa2,IkawaMult,plaques}:
\begin{prop}
\label{prop:contrder}For every $m\geq0$ we have 
\[
|\nabla\varphi_{J}|_{m}\leq C_{m}|\nabla\varphi|_{m}.
\]
\end{prop}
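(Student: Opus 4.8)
The statement is a quantitative control, uniform in the story of reflections $J$, of the higher derivatives of the gradient of the reflected phase $\nabla\varphi_J$ by those of $\nabla\varphi$. The natural approach is induction on the length of $J$: it suffices to prove the single-reflection step, that is, to show that if $\varphi$ verifies $(P)$ on $\partial\Theta_p$ and $\varphi_j$ is its reflection on $\Theta_j$, then $|\nabla\varphi_j|_m(\mathcal{U}_j(\varphi))\le C_m |\nabla\varphi|_m(\mathcal{U})$ with $C_m$ depending only on $m$ and the geometry of the obstacles (the curvature bounds, the parameter $\delta_1$, and $\alpha_0,\beta_0$), and then iterate. The key geometric input making this possible is the fundamental identity recalled just above the proposition, $\nabla\varphi_j(X^1(x,\nabla\varphi)+\tau\,\Xi^1(x,\nabla\varphi))=\Xi^1(x,\nabla\varphi)$: along each reflected ray the gradient of $\varphi_j$ is \emph{constant}, so all the information about $\nabla\varphi_j$ is carried by the reflection map $x\mapsto(X^1(x,\nabla\varphi),\Xi^1(x,\nabla\varphi))$ restricted to the incoming wavefront $\{\nabla\varphi(y)\cdot n(y)\ge\delta_1\}$.

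\textbf{Key steps.} First I would set up coordinates adapted to the incoming level surface: parametrize points of $\mathcal{U}_j(\varphi)$ by $(y,\tau)$ where $y\in\Gamma_p(\varphi)\subset\partial\Theta_p$ (or more precisely by the foot point on the reflecting obstacle and the flow time $\tau\ge0$), so that $\nabla\varphi_j$ at the point $X^1+\tau\Xi^1$ equals $\Xi^1(y,\nabla\varphi(y))$, the reflected direction. Differentiating $|a\cdot\nabla|\cdots|a\cdot\nabla|$ of $\nabla\varphi_j$ then becomes, by the chain rule, a sum of terms built from: (i) derivatives in $y$ of the reflection law $\xi\mapsto\xi-2(n(y)\cdot\xi)n(y)$, which are controlled by derivatives of the Gauss map $n$ on $\partial\Theta_j$ — smooth, with bounds depending only on the obstacle; (ii) derivatives of $\nabla\varphi$ along $\partial\Theta_p$, bounded by $|\nabla\varphi|_m$; and (iii) derivatives of the ray map $(y,\tau)\mapsto$ point, together with its inverse, i.e. derivatives of $\tau$ and $y$ as functions of $x$ on $\mathcal{U}_j(\varphi)$. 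Controlling (iii) is where one uses the hypotheses $(P)$: condition 1 (non-negativity of the principal curvatures of the level surfaces with respect to $-\nabla\varphi$) guarantees that level surfaces do not refocus after reflection off a convex obstacle, hence the map $(y,\tau)\mapsto x$ is a diffeomorphism onto $\mathcal{U}_j(\varphi)$ with Jacobian bounded below, and the convexity condition 3 keeps the geometry uniformly nondegenerate; condition 2 ensures the reflected phase is defined on a set that actually covers the relevant region. One then obtains, by an internal induction on $m$, a bound of the form $|\nabla\varphi_j|_m \le P_m\bigl(|\nabla\varphi|_1,\dots,|\nabla\varphi|_m\bigr)$ for some universal polynomial; the linear bound $C_m|\nabla\varphi|_m$ claimed in the proposition then follows because the lower-order factors $|\nabla\varphi|_1,\dots,|\nabla\varphi|_{m-1}$ are themselves controlled (by interpolation, or directly, using $|\nabla\varphi|_0=1$ since $|\nabla\varphi|=1$, and the fact that these are quantities on a fixed compact geometry) — this is exactly the structure of the Ikawa/Burq estimates being cited.

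\textbf{Main obstacle.} The delicate point is making the constant $C_m$ genuinely independent of $J$, i.e. of the number of reflections already performed. At each reflection step one a priori loses a geometric constant (from the curvature of the obstacle and from the diffeomorphism bound in (iii)); a naive iteration would produce $C_m^{|J|}$, which is useless. The resolution — and this is the heart of Ikawa's argument — is that the convexity/sign conditions $(P)$ are \emph{propagated} by reflection: $\varphi_j$ again verifies $(P)$ on $\partial\Theta_j$, and moreover the relevant curvatures of the level surfaces of $\varphi_J$ are bounded \emph{uniformly in $J$} (they are driven toward a fixed point of the reflection dynamics, because strict convexity of the obstacles contracts the space of curvature forms). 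So the honest plan is: (a) establish the single-reflection estimate with a constant depending only on an upper bound $M$ for the level-surface curvatures of the incoming phase; (b) show that if the incoming curvatures are $\le M$ then the outgoing ones are $\le M$ as well, for $M$ chosen large enough depending only on the obstacle geometry (an invariant-cone argument for the reflection operator on symmetric matrices); (c) conclude that along the whole chain $J$ the single-step constant is the \emph{same} $C_m$, so the composed estimate is $C_m|\nabla\varphi|_m$ with no dependence on $|J|$. I would present (a) in detail as above, cite \cite{Ikawa2,IkawaMult,plaques} for the curvature-propagation fact (b) which is standard in this circle of ideas, and then close the induction.
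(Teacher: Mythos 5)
The paper does not prove this proposition; it is recalled as a known estimate with a citation to \cite{Ikawa2,IkawaMult,plaques}, so there is no in-paper proof to compare against. Your sketch reconstructs the cited argument along the right lines: you correctly reduce everything to the reflection map by exploiting the constancy of $\nabla\varphi_J$ along reflected rays, you correctly use condition $(P)$ to make the ray parametrization a uniformly nondegenerate diffeomorphism, and you correctly identify that the whole difficulty is preventing a $C^{|J|}$ blow-up of constants as the number of reflections grows.

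There is, however, a logical gap in your closing plan (a)--(b)--(c). Step (b) asserts only that an upper bound $M$ on the level-surface curvatures is preserved under one reflection, and step (c) then concludes that since ``the single-step constant is the same $C_m$'' along the chain, the composed estimate is $C_m|\nabla\varphi|_m$ with no dependence on $|J|$. That inference does not hold: composing $|J|$ single-step estimates each with factor $C_m$ yields $C_m^{|J|}$, not $C_m$, and invariance of a bound does not give uniformity of the composition. What is actually needed --- and what you do gesture at earlier when you say strict convexity ``contracts the space of curvature forms'' --- is a genuine \emph{contraction}: one reflection off a strictly convex obstacle must drive the second fundamental form (and, by an induction on $m$, the higher jets of $\nabla\varphi_J$) exponentially toward a fixed equilibrium profile, so that the recursion for $|\nabla\varphi_J|_m$ has the shape $a_{n+1}\leq\rho\,a_n+B$ with $\rho<1$, and the resulting geometric series is summable uniformly in $|J|$. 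This contraction is exactly the mechanism behind the exponential convergence recorded later in the paper as Proposition \ref{prop:convL}, and it is the heart of Ikawa's analysis. To make your argument sound, step (b) must be upgraded from preservation of a bound to strict contraction, and step (c) must sum the resulting geometric series rather than merely note that the single-step constant does not grow.
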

Moreover, according to \cite{plaques}:
\begin{prop}
\label{prop:opens}There exists $M>0$ such that, for each $(i,j)\in\{1,2\}^{2}$,
there exists open sets containing the trapped ray $\mathcal{U}_{i,j}$
such that, if $J=\{i,\cdots,j\}$ verifies $|J|\geq M$, and $\varphi$
verifies $(P)$, $\varphi_{J}$ can be defined in $\mathcal{U}_{i,j}$. 
\end{prop}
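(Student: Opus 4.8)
The plan is to read \propref{opens} as a quantitative statement about the convergence of the reflected wavefronts to the unstable manifold of the trapped ray, and to extract from that convergence a fixed open set on which all the $\varphi_J$ with $|J|\geq M$ are defined. First I would fix a hyperplane $\Sigma$ transverse to $\mathcal R$ at its midpoint, set $\rho_\star=(\mathcal R\cap\Sigma,\ \text{direction of }\mathcal R)\in T^\star\Sigma$, and introduce the billiard return map $\mathcal P$ to $\Sigma$ (one reflection on each obstacle). Because $\Theta_1,\Theta_2$ are strictly convex, $\rho_\star$ is a hyperbolic fixed point of $\mathcal P$ — this is the classical computation that also underlies the exponential factor $C^{t'}$ in \lemref{distm} — with an expansion rate $\Lambda>1$, stable and unstable Lagrangian directions $E^s,E^u$, and an unstable manifold $W^u$ which, near $\rho_\star$, is the graph $\xi=A_u y$ of a positive definite symmetric matrix $A_u$ over a fixed $2$-dimensional neighbourhood $\mathcal O\subset\Sigma$ of $\mathcal R\cap\Sigma$.

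Next I would translate the phase functions into Lagrangians. Given $\varphi$ verifying $(P)$, its wavefronts, transported to $\Sigma$ along the rays generated by $\nabla\varphi$, form a Lagrangian $\Lambda_\varphi\subset T^\star\Sigma$ which is a graph over $\Sigma$; condition (1) of $(P)$ says its curvature (second fundamental form of the level surfaces) is $\geq 0$, i.e. $\Lambda_\varphi$ lies in the positive cone around $E^u$, and condition (2) — the $\Theta_j$‑illumination requirement — forces this graph to be defined over all of $\mathcal O$. Since there are only two obstacles and $j_i\neq j_{i+1}$, the story $J$ of length $n$ is determined by $i$ and $n$, and passing from $\varphi$ to $\varphi_J$ amounts, near $\mathcal R$, to applying $\lceil n/2\rceil$ iterates of $\mathcal P$ (plus a final half step) to $\Lambda_\varphi$; moreover \propref{contrder} guarantees that the whole family $\{\Lambda_{\varphi_J}\}$ stays in a fixed bounded $C^k$ set, uniformly in $J$ and in $\varphi$.

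The heart of the proof is then a graph‑transform estimate: the transform induced by $\mathcal P$ maps the positive cone into itself and contracts it toward $W^u$, so that if $\Lambda$ is the graph of $A\geq 0$ then $\mathcal P_\star\Lambda$ is the graph of some $A'$ with $\lVert A'-A_u\rVert\leq \Lambda^{-2}\lVert A-A_u\rVert+(\text{lower order})$, uniformly on the bounded family above. Iterating, $\Lambda_{\varphi_J}\to W^u$ exponentially, with rate and constants depending only on the geometry of $\Theta_1,\Theta_2$ and on the uniform bounds $C_m$ of \propref{contrder}, not on $\varphi$. Hence there is an $M$ such that for $|J|\geq M$ the Lagrangian $\Lambda_{\varphi_J}$ is $C^1$‑close to $W^u$; in particular it is still a graph over a fixed $\mathcal O'\Subset\mathcal O$, and its rays still strike $\partial\Theta_i$ at an incidence bounded away from tangential — this last point being exactly the kind of statement provided by \lemref{2cross} (no ray crosses $W_{\tan,\eta}$ more than twice), so one stays inside $\Gamma_i(\varphi_J)$ with the margin $\delta_1$. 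Taking $\mathcal U_{i,j}$ to be the fixed open tube around $\mathcal R$ swept by the rays of $W^u$ issued from $\mathcal O'$ (in both directions along $\mathcal R$), one concludes that $\varphi_J$ is well defined on $\mathcal U_{i,j}$ for all $|J|\geq M$.

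The step I expect to be the main obstacle is the uniformity of this contraction over \emph{all} admissible $\varphi$, together with its interaction with the demand that the limiting domain be a fixed open set: one must check that condition (2) of $(P)$ really yields a lower bound on the size of the initial graph $\Lambda_\varphi$ strong enough to survive $M$ contractions and still cover $\mathcal O'$, and that no ray of the evolving beam drifts into $W_{\tan,\delta_1}$ during the first $M$ reflections. These are quantitative facts about strictly convex obstacles; they are precisely what the estimates of Ikawa and Burq establish, so in practice the cleanest route is to invoke \cite{Ikawa2,IkawaMult} and \cite{plaques} directly — as is done here — rather than to redo the graph‑transform analysis from scratch.
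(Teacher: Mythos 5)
The paper does not prove Proposition~\ref{prop:opens}: it simply imports the statement from Burq \cite{plaques} (itself building on Ikawa \cite{Ikawa2,IkawaMult}), which is exactly what you recommend doing in your final paragraph, so the two treatments coincide. Your reconstruction of the argument behind the cited result --- Poincar\'e return map at a section transverse to $\mathcal R$, hyperbolicity of the fixed point $\rho_\star$, the forward invariance of the cone of non-negative-curvature wavefronts (which is what condition (1) of $(P)$ supplies, even if, strictly speaking, non-negative curvature is a larger cone than a small cone around $E^u$; one application of the billiard map sends it into a compact subset on which the transform contracts), the graph-transform contraction onto the unstable Lagrangian with uniformity over $\varphi$ coming from Proposition~\ref{prop:contrder}, and the two quantitative points you flag at the end as the genuinely nontrivial content (the initial graph from condition (2) is large enough to survive the first $M$ steps, and the beam stays out of $W_{\text{tan},\delta_1}$) --- is a faithful account of the mechanism established in \cite{Ikawa2,IkawaMult,plaques}.
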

We set
\[
\hat{\mathcal{U}}_{\infty}=\mathcal{U}_{11}\cap\mathcal{U}_{12}\cap\mathcal{U}_{21}\cap\mathcal{U}_{22},
\]
and $\mathcal{U}_{\infty}\subset\hat{\mathcal{U}}_{\infty}$ to be
an open cylinder having for axis the periodic trajectory and contained
in $\hat{\mathcal{U}}_{\infty}$.  It will be shrinked in the sequel
if necessary.

\subsection{The microlocal cut-off}

According to Section 3, we are reduced to show Proposition \propref{trapset}. 

By \lemref{distsup}, we can construct a small shrinking of $\mathcal{U}_{\infty}$,
$\tilde{\mathcal{U}}_{\infty}\subset\mathcal{U}_{\infty}$ , and $\tilde{q}_{\epsilon,h}\in C^{\infty}(T^{\star}\Omega)$
such that $\tilde{q}_{\epsilon,h}=1$ in an open neighborhood of $\mathcal{\hat{T}}_{2\epsilon|\log h|}(\mathcal{\tilde{U}}_{\infty})$,
$\tilde{q}_{\epsilon,h}=0$ outside $\mathcal{\hat{T}}_{2\epsilon|\log h|}(\mathcal{U}_{\infty})$
in such a way that, for all multi-indice $\alpha$,
\begin{equation}
|\partial_{\alpha}\tilde{q}_{\epsilon,h}|\lesssim h^{-2|\alpha|c\epsilon}.\label{eq:contrq}
\end{equation}
It suffices to show Strichartz estimates (\ref{eq:redSec3}) in time
$\epsilon h|\log h|$ for $L^{2}$ functions microlocally supported
in $\mathcal{\hat{T}}_{2\epsilon|\log h|}(\mathcal{\tilde{U}}_{\infty})$
and spatially supported away from a small neighborhood $\mathcal{V}$
of $\partial\left(\Theta_{1}\cup\Theta_{2}\right)$. Let $\chi_{0}\in C^{\infty}$
such that $\chi_{0}=0$ near $\partial\left(\Theta_{1}\cup\Theta_{2}\right)$
and $\chi_{0}=1$ outside $\mathcal{V}$. For functions microlocally
supported in $\mathcal{\hat{T}}_{2\epsilon|\log h|}(\mathcal{\tilde{U}}_{\infty})$
and spatially supported away from $\mathcal{V}$, $\chi_{0}\text{Op}(\tilde{q}_{\epsilon,h})u=u$,
thus it suffices to show
\begin{equation}
\Vert\chi e^{-it\Delta}\psi(-h^{2}\Delta)\chi_{0}\text{Op}(\tilde{q}_{\epsilon,h})u\Vert_{L^{q}(0,\epsilon h|\log h|)L^{r}}\lesssim\Vert\psi(-h^{2}\Delta)u\Vert_{L^{2}}.\label{eq:mlco2-1}
\end{equation}
for all $\chi\in C^{\infty}$ supported in $\mathcal{\tilde{U}}_{\infty}$.
We will show the stronger estimate:
\begin{equation}
\Vert e^{-it\Delta}\psi(-h^{2}\Delta)\chi_{0}\text{Op}(\tilde{q}_{\epsilon,h})u\Vert_{L^{q}(0,\epsilon h|\log h|)L^{r}}\lesssim\Vert\psi(-h^{2}\Delta)u\Vert_{L^{2}}.\label{eq:mlco2}
\end{equation}
By the $TT^{\star}$ method - see for example \cite{KeelTao} - it
suffise to show the dispersive estimate, for $0\leq t\leq\epsilon|\log h|$:
\begin{equation}
\Vert Q_{\epsilon,h}^{\star}e^{-ith\Delta}Q_{\epsilon,h}\Vert_{L^{1}\rightarrow L^{\infty}}\lesssim\frac{1}{(ht)^{3/2}}\label{eq:disp-1}
\end{equation}
where
\[
Q_{\epsilon,h}:=\psi(-h^{2}\Delta)\chi_{0}\text{Op}(\tilde{q}_{\epsilon,h}).
\]

The symbol associated to $\text{Op}(\tilde{q}_{\epsilon,h})\chi_{0}\psi(-h^{2}\Delta)$
admits by \cite{semibook} the development
\begin{equation}
\sum_{k=0}^{N}\frac{(ih)^{k}}{k!}\langle D_{\xi},D_{y}\rangle{}^{k}\left(\tilde{q}_{\epsilon,h}(x,\xi)\chi_{0}(y)\psi(\eta)\right)_{|\eta=\xi,y=x}+O(h^{N+1}).\label{eq:grndN}
\end{equation}
Let us define $q_{\epsilon,h,N}\in C^{\infty}(T^{\star}\Omega)$ by
\begin{equation}
q_{\epsilon,h,N}(x,\xi)=\sum_{k=0}^{N}\frac{(ih)^{k}}{k!}\langle D_{\xi},D_{y}\rangle{}^{k}\left(\tilde{q}_{\epsilon,h}(x,\xi)\chi_{0}(y)\psi(\eta)\right)_{|\eta=\xi,y=x}.\label{eq:defq}
\end{equation}
Then, to show (\ref{eq:disp-1}), it suffises to show
\begin{equation}
\Vert\text{Op}(q_{\epsilon,h,N})^{\star}e^{-ith\Delta}\text{Op}(q_{\epsilon,h,N})\Vert_{L^{1}\rightarrow L^{\infty}}\lesssim\frac{1}{(ht)^{3/2}}\label{eq:dispder}
\end{equation}
for $N$ large enough. 

Note that, in particular, 
\begin{equation}
\text{Supp}q_{\epsilon,h,N}\subset\mathcal{\hat{T}}_{2\epsilon|\log h|}(\mathcal{U}_{\infty})\cap\{|\xi|\in[\alpha_{0},\beta_{0}]\}\label{eq:suppq}
\end{equation}
and $q_{\epsilon,h,N}$ is spatially supported outside a small neighborhood
of $\partial\left(\Theta_{1}\cup\Theta_{2}\right)$ not depending
of $\epsilon,h,N$.

Finally, set
\[
\delta_{\epsilon,h,N}^{y}(x)=\frac{1}{(2\pi h)^{3}}\int e^{-i(x-y)\cdot\xi/h}q_{\epsilon,T,N}(x,\xi)d\xi,
\]
in order to have, for $u\in L^{2}$
\[
\left(\text{Op}(q_{\epsilon,h,N})u\right)(x)=\int\delta_{\epsilon,h,N}^{y}(x)u(y)dy.
\]
Notice that
\[
\text{Op}(q_{\epsilon,h,N})^{\star}e^{-ith\Delta}\text{Op}(q_{\epsilon,h,N})u(x)=\int\text{Op}(q_{\epsilon,h,N})^{\star}e^{-ith\Delta}\delta_{\epsilon,T,N}^{y}(x)u(y)dy,
\]
thus, to show (\ref{eq:dispder}), it suffises to study $\delta_{\epsilon,h,N}^{y}$
and to show that, for $N$ large enough
\[
|\text{Op}(q_{\epsilon,h,N})^{\star}e^{-ith\Delta}\delta_{\epsilon,h,N}^{y}|\lesssim\frac{1}{(ht)^{\frac{3}{2}}},\ \text{for }0\leq t\leq\epsilon|\log h|.
\]
Let $\mathcal{V}_{1}$ be a small neighborhood of $\partial(\Theta_{1}\cup\Theta_{2})$
on wich $q_{\epsilon,h,N}$ is vanishing and $\chi_{+}\in C_{0}^{\infty}(\mathbb{R}^{n})$
be such that $\chi_{+}=1$ on $\mathcal{U}_{\infty}\cap\mathcal{V}_{1}^{c}$.
We choose $\chi_{+}$ to be supported on $\text{Conv}(\Theta_{1}\cup\Theta_{2})\backslash(\Theta_{1}\cup\Theta_{2})$
and away from a small enough neighborhood of $\partial(\Theta_{1}\cup\Theta_{2})$,
$\text{Conv}$ denoting the convex hull. Note that in particular,
$\text{Op}(q_{\epsilon,h,N})^{\star}=\text{Op}(q_{\epsilon,h,N})^{\star}\chi_{+}$.
The symbol of $\text{Op}(q_{\epsilon,T,N})^{\star}$ enjoys the development
\[
q_{\epsilon,h,N}{}^{\star}(x,\xi)=e^{ih\langle D_{x},D_{\xi}\rangle}q_{\epsilon,h,N}.
\]
Thus, by (\ref{eq:contrq}), taking $\epsilon>0$ small enough, we
have $|q_{\epsilon,T,N}^{\star(\alpha)}|\lesssim1$ for all $|\alpha|\leq n+1=4$.
Moreover, $q_{\epsilon,T,N}^{\star(\alpha)}$ is compactly supported
in frequencies. Therefore, by \cite{boundedlp}, Section 4, $\text{Op}(q_{\epsilon,T,N})$
is bounded on $L^{\infty}\rightarrow L^{\infty}$ independently of
$h$. Therefore, we only have to show, for all $0\leq T\leq\epsilon|\log h|$
\begin{equation}
|\chi_{+}e^{-ith\Delta}\delta_{\epsilon,h,N}^{y}|\lesssim\frac{1}{(ht)^{\frac{3}{2}}},\ \text{for }0\leq t\leq\epsilon|\log h|\label{eq:ultimred}
\end{equation}
for $N$ large enough. 

In order to do so, we will construct a parametrix - that is, an approximate
solution - in time $0\leq t\leq\epsilon|\log h|$ for the semi-classical
Schr\"{o}dinger equation with data $\delta_{\epsilon,h,N}^{y}$. The first
step will be to construct an approximate solution of the semi-classical
Schr\"{o}dinger equation with data 
\[
e^{-i(x-y)\cdot\xi/h}q_{\epsilon,h,N}(x,\xi)
\]
where $\xi\in\mathbb{R}^{n},\xi\in\text{Supp}q_{\epsilon,h,N}$ is
fixed and considered as a parameter. This is the aim of this section.

From now on, we will denote $q$ for $q_{\epsilon,h,N}$.

\subsection{Approximate solution}

We look for the solution in positives times of the equation
\[
\begin{cases}
(i\partial_{t}w-h\Delta w) & =0\ \text{in }\Omega\\
w(t=0)(x) & =e^{-i(x\cdot\xi-t\xi^{2})/h}q(x,\xi)\\
w_{|\partial\Omega} & =0
\end{cases}
\]
as the Neumann serie \emph{
\[
w=\sum_{J\in\mathcal{I}}(-1)^{|J|}w^{J}
\]
}where
\[
\begin{cases}
(i\partial_{t}w^{\emptyset}-h\Delta w^{\emptyset}) & =0\ \text{in }\mathbb{R}^{n}\\
w(t=0)(x) & =e^{-i((x-y)\cdot\xi-t\xi^{2})/h}q(x,\xi)
\end{cases}
\]
 and, for $J\neq\emptyset$, $J=(j_{1},\cdots,j_{n})$, $J'=(j_{1},\cdots,j_{n-1})$
\begin{equation}
\begin{cases}
(i\partial_{t}w^{J}-h\Delta w^{J}) & =0\ \text{in }\mathbb{R}^{n}\backslash\Theta_{j_{n}}\\
w(t=0) & =0\\
w_{|\partial\Theta_{j_{n}}}^{J} & =w_{|\partial\Theta_{j_{n}}}^{J'}.
\end{cases}\label{eq:wJ}
\end{equation}
We will look for the $w^{J}$'s as power series in $h.$ In the sake
of conciseness, these series will be considered at a formal level
in this section, and we will introduce their expression as a finite
sum plus a reminder later, in the last section.

\subsubsection{Free space solution}

We look for $u^{\emptyset}$ as
\begin{align*}
w^{\emptyset} & =\sum_{k\geq0}h^{k}w_{k}^{\emptyset}e^{-i((x-y)\cdot\xi-t\xi^{2})/h}\\
w_{0}^{\emptyset}(t & =0)=q(x,\xi)\\
w_{k}^{\emptyset}(t & =0)=0
\end{align*}
solving the transport equations gives immediately
\begin{align*}
w_{0}^{\emptyset} & =q(x-2t\xi,\xi)\\
w_{k}^{\emptyset} & =-i\int_{0}^{t}\Delta w_{k-1}(x-2(s-t)\xi,s)ds\quad k\geq1
\end{align*}

\subsubsection{Reflected solutions}

We would like to reflect $w^{\emptyset}$ on the obstacle. To this
purpose, starting from the phase $\varphi(x)=\frac{(x-y)\cdot\xi}{|\xi|}$,
we would like to define the reflected phases as explained in subsection
4.1. 

We decompose the set of the stories of reflections as 
\[
\mathcal{I}=\mathcal{I}_{1}\cup\mathcal{I}_{2}
\]
where $\mathcal{I}_{1}$ are all stories begining with a reflection
on $\Theta_{1}$, that is of the form $(1,\cdots)$, and $\mathcal{I}_{2}$
begining with a reflection on $\Theta_{2}$, that is of the form $(2,\cdots)$.

Let $e$ be a unit vector with the same direction as $\mathcal{R}$.
We take $e$ oriented from $\Theta_{1}$ to $\Theta_{2}$. Notice
that, for $\frac{\xi}{|\xi|}$ in a small enough neighborhood $V$
of $\{e,-e\}$
\begin{itemize}
\item if $\xi\cdot e>0$, then $\frac{(x-y)\cdot\xi}{|\xi|}$ verifies $(P)$
on $\Theta_{1}$,
\item if $\xi\cdot e<0$, then $\frac{(x-y)\cdot\xi}{|\xi|}$ verifies $(P)$
on $\Theta_{2}$.
\end{itemize}
We remark that the support of $w^{\emptyset}$ never cross $\Theta_{1}$
in any time in the first case, and never cross $\Theta_{2}$ in any
time in the second case. Let $(i_{0},i_{1})=(1,2\text{)}$ if we are
in the first situation, and $(2,1)$ if we are in the second one.
We set
\[
w^{J}=0,\text{ \ensuremath{\forall}}J\in\mathcal{I}_{i_{0}}.
\]
Then, (\ref{eq:wJ}) is satisfied for all $J\in\mathcal{I}_{i_{0}}$:
indeed, because the support of $w^{\emptyset}$ never cross $\Theta_{i_{0}}$,
we have for all time $0=w_{|\partial\Theta_{i_{0}}}^{\emptyset}=w_{|\partial\Theta_{i_{0}}}^{\left\{ i_{0}\right\} }$,
and so on. Thus, we are reduced construct the $w^{J}$'s for $J\in\mathcal{I}_{i_{1}}$.

Denoting by $\pi$ the projection on the directions space, we take
a spatial neighborhood $\mathcal{U}$ of $\mathcal{R}$ sufficently
small so that if $\xi\in\pi(\mathcal{\hat{T}}_{2\epsilon|\ln h|}(\mathcal{U}))$,
then $\frac{\xi}{|\xi|}\in V$, and if necessary, we reduce $\mathcal{U}_{\infty}$
so that it is contained in $\mathcal{U}$. For $\xi\in\pi\text{Supp}q\subset\pi(\mathcal{\hat{T}}_{2\epsilon|\ln h|}(\mathcal{U}_{\infty}))$,
starting with $\varphi(x)=\frac{(x-y)\cdot\xi}{|\xi|}$, who verifies
$(P)$ on $\Theta_{i_{0}}$, we construct the reflected phases $\varphi_{J}$,
$J\in\mathcal{I}_{i_{1}}$ defined in $\mathcal{U}_{J}(\varphi)$
as we explained in the begining of the section. Reducing again $\mathcal{U}_{\infty}$
if necessary, for all $\xi\in\pi\text{Supp}q\subset\pi(\mathcal{\hat{T}}_{2\epsilon|\ln h|}(\mathcal{U}_{\infty}))$,
a ray starting in $\mathcal{U}_{\infty}$ with direction $\xi$ verifies
$-n(X^{i}(x,\xi))\cdot\Xi^{i}(x,\xi)\geq2\delta_{1}$ for all $i\leq M$.
Thus, for $|J|\leq M$, $\mathcal{U}_{J}\supset\mathcal{U_{\infty}}$,
and therefore $\mathcal{U}_{J}\supset\mathcal{U_{\infty}}$ for all
$J$.

We look for $w^{J}$ as
\begin{align*}
w^{J} & =\sum_{k\geq0}h^{k}w_{k}^{J}e^{-i(\varphi_{J}(x,\xi)|\xi|-t\xi^{2})/h},\\
w_{k}^{J}|_{t\leq0} & =0,\\
w_{k|\partial\Theta_{j_{n}}}^{J} & =w_{k|\partial\Theta_{j_{n}}}^{J'}.
\end{align*}
For $x\in\mathcal{U}_{J}(\varphi)$, we have 
\[
\begin{cases}
(\partial_{t}+2|\xi|\nabla\varphi_{J}\cdot\nabla+|\xi|\Delta\varphi_{J})w_{0}^{J} & =0\\
w_{0|\Theta_{j_{n}}}^{J} & =w_{0|\Theta_{j_{n}}}^{J'}\\
w_{0}^{J}|_{t\leq0} & =0
\end{cases}
\]
and
\[
\begin{cases}
(\partial_{t}+2|\xi|\nabla\varphi_{J}\cdot\nabla+|\xi|\Delta\varphi_{J})w_{k}^{J} & =-i\Delta w_{k-1}^{J}\\
w_{k|\Theta_{jn}}^{J} & =w_{k|\Theta_{j_{n}}}^{J'}\\
w_{k}^{J}|_{t\leq0} & =0.
\end{cases}
\]

We can solve the transport equation along the rays:
\begin{lem}
\label{lem:allongrays}Let $\psi\in C^{\infty}(D)$ be such that $|\nabla\psi|=1$.
Then, when $\{x+\tau|\xi|\nabla\psi(x);\;\tau\in[0,\tau_{0}]\}\subset D$,
the solution of the equation
\[
(\partial_{t}+2|\xi|\nabla\psi\cdot\nabla+|\xi|\Delta\psi)v=h
\]
is represented as
\begin{multline*}
v(x+2\tau|\xi|\nabla\psi(x),t+\tau)=\left(\frac{G\psi(x+2\tau|\xi|\nabla\psi(x))}{G\psi(x)}\right)^{1/2}v(x,t)\\
+\int_{0}^{\tau}\left(\frac{G\psi(x+2\tau|\xi|\nabla\psi(x))}{G\psi(x+2s|\xi|\nabla\psi(x))}\right)^{1/2}h(x+2s|\xi|\nabla\psi(x),t+s)ds.
\end{multline*}
where $G\psi$ denote the gaussian curvature of the level surfaces
of $\psi$.
\end{lem}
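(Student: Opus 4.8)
The plan is to integrate the transport equation along its characteristics, which turn out to be straight lines. The first observation is that, since $|\nabla\psi|=1$, differentiating this identity gives $(\nabla^{2}\psi)\nabla\psi=0$; hence $\nabla\psi$ is constant along each curve $s\mapsto x+2s|\xi|\nabla\psi(x)$, so these curves are straight segments and are precisely the spatial projections of the characteristics of $\partial_{t}+2|\xi|\nabla\psi\cdot\nabla$, parametrised in time by $t\mapsto t+s$. The hypothesis $\{x+\tau|\xi|\nabla\psi(x);\,\tau\in[0,\tau_{0}]\}\subset D$ guarantees that this characteristic stays in the region where $\psi$ is smooth.

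Next I would set $V(s)=v(x+2s|\xi|\nabla\psi(x),\,t+s)$. Using the constancy of $\nabla\psi$ along the ray, $V'(s)=\big(\partial_{t}v+2|\xi|\nabla\psi\cdot\nabla v\big)\big(x+2s|\xi|\nabla\psi(x),\,t+s\big)$, so the equation becomes the scalar linear ODE
\[
V'(s)+|\xi|\,(\Delta\psi)\big(x+2s|\xi|\nabla\psi(x)\big)\,V(s)=h\big(x+2s|\xi|\nabla\psi(x),\,t+s\big).
\]
Solving with the integrating factor $\mu(s)=\exp\!\big(\int_{0}^{s}|\xi|(\Delta\psi)(x+2r|\xi|\nabla\psi(x))\,dr\big)$ gives
\[
V(\tau)=\mu(\tau)^{-1}V(0)+\int_{0}^{\tau}\frac{\mu(s)}{\mu(\tau)}\,h\big(x+2s|\xi|\nabla\psi(x),\,t+s\big)\,ds,
\]
which already has the form of the claimed representation.

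It remains to identify $\mu(\tau)^{-1}$ and $\mu(s)/\mu(\tau)$ with the Gaussian-curvature ratios; this is the geometric core. Writing $\rho=2s|\xi|$ for the Euclidean arc length along the ray, the level sets $\{\psi=c\}$ form a family of parallel hypersurfaces whose common normal lines are the rays above. The classical Riccati equation for the shape operator of parallel surfaces (valid because the normals are straight lines in $\mathbb{R}^{3}$) yields $\kappa_{i}(\rho)=\kappa_{i}(0)/(1+\rho\kappa_{i}(0))$ for the principal curvatures; hence $\Delta\psi=\sum_{i}\kappa_{i}$ and $G\psi=\prod_{i}\kappa_{i}$ satisfy $\frac{d}{d\rho}\log G\psi=\sum_{i}\kappa_{i}'/\kappa_{i}=-\sum_{i}\kappa_{i}=-\Delta\psi$. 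Converting to the variable $s$, this reads $\frac{d}{ds}\log(G\psi)(x+2s|\xi|\nabla\psi(x))=-2|\xi|(\Delta\psi)(\cdots)=-2\frac{d}{ds}\log\mu(s)$. Integrating from $0$ (and using $\mu(0)=1$) gives $\mu(s)^{2}=G\psi(x)/G\psi(x+2s|\xi|\nabla\psi(x))$, so $\mu(\tau)^{-1}=\big(G\psi(x+2\tau|\xi|\nabla\psi(x))/G\psi(x)\big)^{1/2}$ and $\mu(s)/\mu(\tau)=\big(G\psi(x+2\tau|\xi|\nabla\psi(x))/G\psi(x+2s|\xi|\nabla\psi(x))\big)^{1/2}$, which is exactly the asserted formula. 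Here the principal curvatures are taken with respect to $-\nabla\psi$, as in condition $(P)$, and one works on the region where $G\psi$ does not vanish, which is the relevant case for the reflected phases $\varphi_{J}$ obtained after reflection on a strictly convex obstacle.

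I expect the last step to be the only real obstacle: one must reconcile all the sign and parametrisation conventions --- the variable $\rho=2s|\xi|$ versus $s$, the orientation of the normal, and the factors $|\xi|$ in the transport operator --- so that the integrating factor comes out precisely as the square root of the curvature quotient. The rest is a routine characteristics-plus-Duhamel computation.
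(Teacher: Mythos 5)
Your proof is correct and follows essentially the same route as the paper: both reduce the transport equation to a scalar ODE along the straight-line characteristics (using $|\nabla\psi|=1$ to show $\nabla\psi$ is constant along each ray) and then identify the integrating factor with a square root of a Gaussian-curvature quotient. The only difference is that the paper simply cites Luneberg for this identity, whereas you derive it from the Riccati equation $\kappa_i'=-\kappa_i^2$ for the principal curvatures of the parallel level surfaces, which makes your argument self-contained.
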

\begin{proof}
If we take $w(\tau)=v(x+2\tau|\xi|\nabla\psi,t+\tau)$, $w$ solves
the following ordinary differential equation
\[
\partial_{\tau}w=-|\xi|\Delta\psi(x+2\tau\nabla\psi(x))w.
\]
But - see for example \cite{Luneberg}:
\[
\exp(-|\xi|\int_{\tau_{0}}^{\tau}\Delta\psi(x+2s\nabla\psi(x))ds)=\left(\frac{G\psi(x+2\tau|\xi|\nabla\psi(x))}{G\psi(x+2\tau_{0}|\xi|\nabla\psi(x))}\right)^{1/2},
\]
so the formula holds.
\end{proof}
Now, we can find $w_{0}^{J}$. Indeed, as $w_{0|\partial\Theta_{j_{n}}}^{J}=w_{0|\partial\Theta_{j_{n}}}^{J'}$,
because of the previous formula we have
\[
w_{0}^{J}(x+2\tau|\xi|\nabla\varphi_{J}(x),t+\tau)=\left(\frac{G\varphi_{J}(x+2\tau|\xi|\nabla\varphi_{J}(x))}{G\varphi_{J}(x)}\right)^{1/2}w_{0}^{J'}(x,t)
\]
for all $x\in\Gamma_{j_{n}}$. So, for all $x\in\mathcal{U}_{J}(\varphi)$
we get
\begin{multline*}
w_{0}^{J}(x,t)=\left(\frac{G\varphi_{J}(x)}{G\varphi_{J}(X^{-1}(x,|\xi|\nabla\varphi_{J}))}\right)^{1/2}\\
\times w_{0}^{J'}\left(X^{-1}(x,|\xi|\nabla\varphi_{J}),t-\frac{\varphi_{J}(x)-\varphi_{J}(X^{-1}(x,|\xi|\nabla\varphi_{J}))}{2|\xi|}\right).
\end{multline*}

Iterating the process and doing the same for $k\geq1$ we get the
following expressions of $w_{k}^{J}$ for $x\in\mathcal{U}_{J}(\varphi)$
\begin{prop}
\label{prop:solref}We denote by $\hat{X}_{-2t}(x,|\xi|\nabla\varphi_{J})$
the backward spatial component of the flow starting from $(x,|\xi|\nabla\varphi_{J})$,
defined in the same way as $X_{-2t}(x,|\xi|\nabla\varphi_{J})$, at
the difference that we ignore the first obstacle encountered if it's
not $\Theta_{j_{n}},$ and we ignore the obstacles after $|J|$ reflections.
Moreover, for $J=(j_{1}=i_{1},\dots,j_{n})\in\mathcal{I}_{i_{1}}$,
denote by
\[
J(x,t,\xi)=\begin{cases}
(j_{1},\cdots,j_{k}) & \text{if }\hat{X}_{-2t}(x,|\xi|\nabla\varphi_{J})\text{ has been reflected \ensuremath{n-k} times,}\\
\emptyset & \text{if }\hat{X}_{-2t}(x,|\xi|\nabla\varphi_{J})\text{ has been reflected \ensuremath{n} times}.
\end{cases}
\]
 Then, the $w_{k}^{J}$'s are given by, for $t\geq0$ and $x\in\mathcal{U}_{J}(\varphi)$
\[
w_{0}^{J}(x,t)=\Lambda\varphi_{J}(x,\xi)q(\hat{X}_{-2t}(x,|\xi|\nabla\varphi_{J}),\xi)
\]
where
\[
\Lambda\varphi_{J}(x,\xi)=\left(\frac{G\varphi_{J}(x)}{G\varphi_{J}(X^{-1}(x,|\xi|\nabla\varphi_{J}))}\right)^{1/2}\times\cdots\times\left(\frac{G\varphi(X^{-|J|-1}(x,|\xi|\nabla\varphi_{J}))}{G\varphi(X^{-|J|}(x,|\xi|\nabla\varphi_{J}))}\right)^{1/2},
\]
and, for $k\geq1$, and $x\in\mathcal{U}_{J}(\varphi)$
\[
w_{k}^{J}(x,t)=-i\int_{0}^{t}g_{\varphi_{J}}(x,t-s,\xi)\Delta w_{k-1}^{J(x,\xi,t-s)}(\hat{X}_{-2(t-s)}(x,|\xi|\nabla\varphi_{J}),s)ds
\]
where

\[
g_{\varphi_{J}}(x,\xi,t)=\left(\frac{G\varphi_{J}(x)}{G\varphi_{J}(X^{-1}(x,|\xi|\nabla\varphi_{J}))}\right)^{1/2}\times\cdots\times\left(\frac{G\varphi_{J(x,t,\xi)}(X^{-|J(x,t,\xi)|-1}(x,|\xi|\nabla\varphi_{J}))}{G\varphi_{J(x,t,\xi)}(\hat{X}_{-2t}(x,|\xi|\nabla\varphi_{J}))}\right)^{1/2}.
\]
\end{prop}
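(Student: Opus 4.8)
The plan is to solve the transport cascade along the broken billiard rays, propagating the amplitudes leg by leg with the single-ray formula of \lemref{allongrays} and gluing across reflections by means of the boundary matching conditions $w_{k|\partial\Theta_{j_{n}}}^{J}=w_{k|\partial\Theta_{j_{n}}}^{J'}$. First I would insert the WKB ansatz $w^{J}=\sum_{k\geq0}h^{k}w_{k}^{J}e^{-i(\varphi_{J}(x,\xi)|\xi|-t\xi^{2})/h}$ into $i\partial_{t}w^{J}-h\Delta w^{J}=0$ and collect powers of $h$: the eikonal relation $|\nabla\varphi_{J}|=1$ cancels the $O(h^{-1})$ contribution, and the $O(h^{0})$, resp.\ $O(h^{k})$, terms are exactly the transport equations recalled just before the statement, with vanishing Cauchy data. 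Before iterating I would record that, for $J\in\mathcal{I}_{i_{1}}$ and $\xi\in\pi\,\text{Supp}\,q$, \propref{opens} together with the discussion above guarantees that every $\varphi_{J}$ is defined on $\mathcal{U}_{J}(\varphi)\supset\mathcal{U}_{\infty}$, while \propref{contrder} keeps the geometric quantities involved under control.

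Next I would observe that the first-order operator $\partial_{t}+2|\xi|\nabla\varphi_{J}\cdot\nabla+|\xi|\Delta\varphi_{J}$ is precisely the one treated in \lemref{allongrays} with $\psi=\varphi_{J}$, its characteristics $s\mapsto x+2s|\xi|\nabla\varphi_{J}(x)$ running along the billiard ray issued from $(x,|\xi|\nabla\varphi_{J})$. Hence on any ray segment lying inside $\mathcal{U}_{J}(\varphi)$ and meeting no obstacle, $w_{0}^{J}$ is multiplied by the square root of the ratio of the Gaussian curvatures of the level surfaces of $\varphi_{J}$ at the two endpoints, and $w_{k}^{J}$ picks up, in addition, the Duhamel integral of $-i\Delta w_{k-1}^{J}$ carried along the ray and weighted by the same curvature factor.

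Then I would fix $x\in\mathcal{U}_{J}(\varphi)$ and $t\geq0$, and march down the backward ray $\hat X_{-\tau}(x,|\xi|\nabla\varphi_{J})$: apply the one-leg formula on the last segment ending at $x$; use the matching condition at the reflection point $X^{-1}\in\partial\Theta_{j_{n}}$ to replace $w^{J}$ by $w^{J'}$ there; repeat on the next segment with the phase $\varphi_{J'}$; and so on through the successive truncations of $J$ until reaching the free solution $w_{0}^{\emptyset}(\cdot,t')=q(\cdot-2t'\xi,\xi)$, whose amplitude is constant since $\Delta\varphi\equiv0$, so its leg contributes the trivial factor $1$. Collecting the curvature ratios segment by segment reproduces the product $\Lambda\varphi_{J}$ and the stated formula for $w_{0}^{J}$. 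For $k\geq1$ the same descent applies to $w_{k}^{J}$: the source $-i\Delta w_{k-1}^{J}$ emitted at time $s$ is transported to $(x,t)$ along the portion of the ray from $\hat X_{-2(t-s)}(x,|\xi|\nabla\varphi_{J})$ to $x$, which after a backward time $t-s$ has undergone exactly the reflections recorded in $J(x,\xi,t-s)$; this forces $w_{k-1}$ to be evaluated along $\varphi_{J(x,\xi,t-s)}$ and the accompanying weight to be the partial product $g_{\varphi_{J}}(x,\xi,t-s)$, which is the asserted expression.

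The one genuinely non-routine ingredient is the geometric bookkeeping in the last step: I must make sure that, for $x\in\mathcal{U}_{\infty}$, $\xi\in\pi\,\text{Supp}\,q$ and $t\leq\epsilon|\log h|$, the backward ray really meets the obstacles $\Theta_{j_{i}}$ transversally, at points lying in the sets $\Gamma_{j_{i}}$ and in the domains where the reflected phases and their curvatures are defined and non-degenerate --- this is where the definition of $\mathcal{U}_{\infty}$, \propref{opens} and the two-crossing bound \lemref{2cross} enter --- and that $\hat X_{-2t}(x,|\xi|\nabla\varphi_{J})$ stays in $\text{Supp}\,q$, so that the right-hand sides make sense. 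Everything else amounts to the routine integration of linear transport equations along rays and a bounded induction on $k$.
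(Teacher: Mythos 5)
Your proposal follows essentially the same route as the paper: insert the WKB ansatz, recognise the transport operator treated in \lemref{allongrays}, apply its one-leg formula along the backward ray, glue across each reflection with the matching condition $w_{k|\partial\Theta_{j_{n}}}^{J}=w_{k|\partial\Theta_{j_{n}}}^{J'}$, and iterate down to the free solution $w^{\emptyset}$, collecting the curvature ratios into $\Lambda\varphi_{J}$ (resp.\ the partial product $g_{\varphi_{J}}$ together with the truncation $J(x,\xi,t-s)$ for $k\ge 1$). The paper states this more tersely --- it writes out the one-step recursion for $w_0^J$ and says ``iterating the process and doing the same for $k\ge 1$'' --- but the content is identical; note only that your extra worry about $t\le\epsilon|\log h|$ and the tangency issues is not needed here, since the formulas are asserted only for $x\in\mathcal{U}_J(\varphi)$, where the reflected rays are transversal by construction, and the extension to arbitrary $x$ in logarithmic time is deferred to \propref{solref2}.
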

The following lemma permits to study the support of the $w_{k}^{J}$'s:
\begin{lem}
\label{lem:supprought}For $x\in\mathcal{U}_{J}(\varphi)$
\begin{equation}
w_{k}^{J}(x,t)\neq0\implies(\hat{X}_{-2t}(x,|\xi|\nabla\varphi_{J}),\xi)\in\text{Supp}q.\label{eq:UinfJ}
\end{equation}
And moreover
\begin{equation}
\text{Supp}w_{k}^{J}\subset\left\{ J(x,\xi,t)=\emptyset\right\} .\label{eq:suppJ}
\end{equation}
\end{lem}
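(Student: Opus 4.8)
I would prove both statements simultaneously, by induction on $k$, working directly from the representation formulas of \propref{solref}; the only genuine content is a concatenation (semigroup) property of the truncated backward broken flow $\hat{X}$.

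For the base case $k=0$, I would start from $w_{0}^{J}(x,t)=\Lambda\varphi_{J}(x,\xi)\,q(\hat{X}_{-2t}(x,|\xi|\nabla\varphi_{J}),\xi)$ and observe that $\Lambda\varphi_{J}$ is a finite product of the strictly positive transport amplitudes of \lemref{allongrays}, hence nonvanishing; so $w_{0}^{J}(x,t)\ne0$ forces $q(\hat{X}_{-2t}(x,|\xi|\nabla\varphi_{J}),\xi)\ne0$, which is (\ref{eq:UinfJ}). For (\ref{eq:suppJ}) I would return to the way this formula is produced: $w_{0}^{J}$ solves a transport equation with vanishing initial data and boundary value $w_{0}^{J'}$ on $\partial\Theta_{j_{n}}$, and unwinding the recursion
\[
w_{0}^{J}(x,t)=\left(\frac{G\varphi_{J}(x)}{G\varphi_{J}(X^{-1}(x,|\xi|\nabla\varphi_{J}))}\right)^{1/2}w_{0}^{J'}\!\left(X^{-1}(x,|\xi|\nabla\varphi_{J}),\,t-\delta\right),\qquad\delta>0,
\]
together with $w_{0}^{J'}|_{t\le0}=0$ for every nonempty story, shows that a nonzero value at $(x,t)$ forces the backward ray issued from $(x,t)$ to meet $\partial\Theta_{j_{n}}$, then $\partial\Theta_{j_{n-1}},\dots,$ then $\partial\Theta_{j_{1}}$ before time $0$, reaching a point where $q$ is nonzero only through the free datum $w_{0}^{\emptyset}(\cdot,0)=q(\cdot,\xi)$. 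That this backward ray has undergone all $|J|$ reflections by time $t$ is, by the definitions of $\hat{X}$ and of $J(x,\xi,t)$, precisely the assertion $J(x,\xi,t)=\emptyset$.

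For the inductive step, assuming both claims for $k-1$ and every reflection story, I would use
\[
w_{k}^{J}(x,t)=-i\int_{0}^{t}g_{\varphi_{J}}(x,t-s,\xi)\,\Delta w_{k-1}^{J(x,\xi,t-s)}\!\bigl(\hat{X}_{-2(t-s)}(x,|\xi|\nabla\varphi_{J}),s\bigr)\,ds :
\]
if $(x,t)\in\text{Supp}\,w_{k}^{J}$, pick $s\in[0,t]$ for which the integrand is nonzero and set $J':=J(x,\xi,t-s)$, $y:=\hat{X}_{-2(t-s)}(x,|\xi|\nabla\varphi_{J})$. Since $\text{Supp}(\Delta f)\subset\text{Supp}\,f$, one has $y\in\text{Supp}_{x}\,w_{k-1}^{J'}(\cdot,s)$, and the inductive hypothesis applied to $w_{k-1}^{J'}$ yields $(\hat{X}_{-2s}(y,|\xi|\nabla\varphi_{J'}),\xi)\in\text{Supp}\,q$ and $J'(y,\xi,s)=\emptyset$. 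It then remains to concatenate the two stretches of backward flow: running it from $x$ for parameter $t-s$ lands at $y$ with residual story $J(x,\xi,t-s)=J'$ and direction $-\nabla\varphi_{J'}(y)$, and running it from $y$ for the remaining parameter $s$ coincides with running it from $x$ for parameter $t$; hence $\hat{X}_{-2s}(y,|\xi|\nabla\varphi_{J'})=\hat{X}_{-2t}(x,|\xi|\nabla\varphi_{J})$ and $J'(y,\xi,s)=J(x,\xi,t)$, which gives (\ref{eq:UinfJ}) and (\ref{eq:suppJ}) for $w_{k}^{J}$.

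The hard part will be making this concatenation rigorous: one must check that the ad hoc conventions in the definition of $\hat{X}$ in \propref{solref} (ignore the first obstacle met going backwards unless it is $\Theta_{j_{n}}$, ignore obstacles met after $|J|$ reflections) are exactly those under which $\hat{X}$ satisfies the semigroup identity $\hat{X}_{-2s}\bigl(\hat{X}_{-2(t-s)}(x,|\xi|\nabla\varphi_{J}),|\xi|\nabla\varphi_{J(x,\xi,t-s)}\bigr)=\hat{X}_{-2t}(x,|\xi|\nabla\varphi_{J})$ and the matching identity for the residual story, and that the broken flow depends continuously on the base point along the trajectories at play — which is ensured because the phase construction of Subsection 4.1 keeps the rays at distance $\ge\delta_{1}$ from glancing incidence, so that the pointwise implications above upgrade to inclusions between the (closed) supports. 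The rest is bookkeeping with the formulas of \propref{solref}.
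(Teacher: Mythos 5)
Your proposal is correct and follows essentially the same strategy as the paper: induction on $k$, direct reading of the explicit formula for $k=0$, and for $k\ge1$ the integral formula of Proposition~\ref{prop:solref} combined with the concatenation (semigroup) identity for $\hat{X}$ and the associated residual story. Your inductive step matches the paper's almost verbatim.

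The only substantive difference is in the base case $k=0$ of~(\ref{eq:suppJ}). The paper keeps the explicit formula $w_{0}^{J}=\Lambda\varphi_{J}\,q\bigl(X^{-|J|}-(t-(\varphi_J(x)-\varphi_J(X^{-|J|})))\xi,\xi\bigr)$ valid for all $t$, observes that $J(x,\xi,t)\neq\emptyset$ forces the argument of $q$ to be a point of the form $X^{-|J|}+s\xi$ with $s>0$ (i.e.\ on or inside $\Theta_{i_1}$, on the far side of the last reflection), and concludes $q=0$ there because $q$ is supported near $\mathcal{R}$ and away from $\partial(\Theta_{1}\cup\Theta_{2})$. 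You instead unwind the transport recursion and appeal to the imposed vanishing initial data $w_{0}^{J'}|_{t\le0}=0$ for nonempty stories: a nonzero value at $(x,t)$ forces the backward characteristic to pick up boundary data at each $\Gamma_{j_m}$ within positive time, hence to undergo all $|J|$ reflections by time $t$. Both arguments are valid; the paper's is self-contained in the explicit formula (and, incidentally, shows the formula is consistent with the imposed initial condition rather than assuming it), while yours uses the initial condition as a given constraint. One small inaccuracy in your phrasing: the nonzero value is not tied specifically to the datum at $t=0$ --- $w_{0}^{\emptyset}(x',t')=q(x'-2t'\xi,\xi)$ is free transport and may be nonzero at any time; what matters is only that the backward characteristic reaches the free regime within nonnegative time, and that $w_{0}^{\emptyset}$ then evaluates $q$ at $\hat{X}_{-2t}(x,|\xi|\nabla\varphi_{J})$. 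You also correctly identify that the concatenation identity for $\hat{X}$ and for the residual story $J(\cdot,\cdot,\cdot)$ is the technical crux; the paper states this identity without proof, as you would.
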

\begin{proof}
We prove both properties by induction. The first, (\ref{eq:UinfJ}),
is obviously true for $k=0$. Now, suppose that it is true at the
rank $k-1$. If $w_{k}^{J}(x,t)\neq0$, there exists $s\in[0,t]$
such that $w_{k-1}^{J(x,\xi,t-s)}(\hat{X}_{-2(t-s)}(x,|\xi|\nabla\varphi_{J}),s)\neq0$.
By the induction hypothesis, we deduce that
\[
\left(\hat{X}_{-2s}(\hat{X}_{-2(t-s)}(x,|\xi|\nabla\varphi_{J}),|\xi|\nabla\varphi_{J(x,\xi,t-s)}),\xi\right)\in\text{Supp}q.
\]
But $\hat{X}_{-2s}(\hat{X}_{-2(t-s)}(x,|\xi|\nabla\varphi_{J}),|\xi|\nabla\varphi_{J(x,\xi,t-s)})=\hat{X}_{-2t}(x,|\xi|\nabla\varphi_{J})$,
thus (\ref{eq:UinfJ}) is shown for all $k\geq0$.

Let us now prove (\ref{eq:suppJ}) . For $J\in\mathcal{I}_{i_{0}}$,
$w_{k}^{J}=0$ thus it suffises to consider $J\in\mathcal{I}_{i_{1}}$.
We prove again the formula by induction on $k$.

We have , with $X^{-|J|}:=X^{-|J|}(x,\nabla\varphi_{J}(x)|\xi|)$:
\[
w_{0}^{J}=\Lambda\varphi_{J}(x,\xi)q\left(X^{-|J|}-\left(t-(\varphi_{J}(x)-\varphi_{J}(X^{-|J|})\right)\xi,\xi\right).
\]
If $J(x,\xi,t)\neq\emptyset$, then $t-(\varphi_{J}(x)-\varphi_{J}(X^{-|J|})\leq0$.
Because $X^{-|J|}\in\partial\Theta_{i_{1}}$, $\xi$ points toward
$\Theta_{i_{1}}$, and $q$ is spatially supported in a neighborhood
of $\mathcal{R}$ and away of the boundary, we deduce that $X^{-|J|}-\left(t-(\varphi_{J}(x)-\varphi_{J}(X^{-|J|})\right)\xi$
does not belong to the spatial support of $q$, thus $w_{0}^{J}(x,t)=0$
and the formula holds for $k=0$.

Now, let $k\geq1$ and suppose that (\ref{eq:suppJ}) is true for
all $J\in\mathcal{I}$ for $w_{k-1}$. Suppose that $w_{k}^{J}(x,\xi,t)\neq0$.
Then, there exists $s\in[0,t]$ such that
\[
w_{k-1}^{J(x,\xi,t-s)}(\hat{X}_{-2(t-s)}(x,|\xi|\nabla\varphi_{J}),s)\neq0.
\]
We denote $J=(j_{1},\dots,j_{n})$ and $J(x,\xi,t-s)=(j_{1},\dots,j_{k})$.
By the induction hypothesis, $J(\hat{X}_{-2(t-s)}(x,|\xi|\nabla\varphi_{J}),\xi,t-s)=\emptyset$.
That means that for $t-s\leq t'\leq t$, $\hat{X}_{-2t'}(x,|\xi|\nabla\varphi_{J})$
is reflected $k$ times. But note that by definition of $J(x,\xi,t-s)$,
for $0\leq t'\leq t-s$,$\hat{X}_{-2t'}(x,|\xi|\nabla\varphi_{J})$
is reflected $n-k$ times; so, for $0\leq t'\leq t$, $\hat{X}_{-2t'}(x,|\xi|\nabla\varphi_{J})$
is reflected $n$ times, therefore $J(x,\xi,t)=\emptyset$ and (\ref{eq:suppJ})
holds.
\end{proof}
We deduce from the previous lemma that we can extend the explicit
expressions of Proposition \propref{solref} by zero outside $\mathcal{U}_{J}(\varphi)$
in logarithmic times:
\begin{prop}
\label{prop:solref2}For $x\notin\mathcal{U}_{J}(\varphi)$ and $0\leq t\leq\epsilon|\log h|$
we have $w_{k}^{J}(x,t)=0$.
\end{prop}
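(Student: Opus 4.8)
The plan is to show that on the logarithmic window $0\le t\le\epsilon|\log h|$ the explicit expression for $w_k^J$ given by Proposition \propref{solref} is supported inside the fixed small cylinder $\mathcal{U}_\infty$; since $\mathcal{U}_\infty$ may be taken with $\overline{\mathcal{U}_\infty}$ compactly contained in $\mathcal{U}_J(\varphi)$ for every $J$ (for $|J|\ge M$ because $\mathcal{U}_J\supset\mathcal{U}_{i,j}$ by Proposition \propref{opens}, and for the finitely many short stories because $\mathcal{U}_J\supset\mathcal{U}_\infty$), this expression vanishes near $\partial\mathcal{U}_J(\varphi)$ and hence extends by $0$ to a smooth function vanishing outside $\mathcal{U}_J(\varphi)$, which is the assertion. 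So let $x\in\mathcal{U}_J(\varphi)$, $0\le t\le\epsilon|\log h|$ with $w_k^J(x,t)\neq0$; I will show $x\in\mathcal{U}_\infty$. One may assume $J=(j_1,\dots,j_{|J|})\in\mathcal{I}_{i_1}$, since $w_k^J\equiv0$ otherwise. By (\ref{eq:suppJ}) of Lemma \lemref{supprought}, $J(x,\xi,t)=\emptyset$; unwinding the definition of $J(x,\xi,t)$, this means that running the reflected ray backward from $(x,|\xi|\nabla\varphi_J(x))$, all $|J|$ reflections of $J$ (off $\partial\Theta_{j_{|J|}},\dots,\partial\Theta_{j_1}$ in this order) occur within the backward time interval of length $2t$. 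On that interval the auxiliary flow $\hat{X}$ therefore coincides with the genuine billiard flow, so $y_0:=\hat{X}_{-2t}(x,|\xi|\nabla\varphi_J)$ is joined to $x$ by an honest reflected ray of story $J$, and the momentum at $y_0$ equals $|\xi|\nabla\varphi(y_0)=\xi$ because $\varphi(\cdot)=(\cdot-y)\cdot\xi/|\xi|$. Thus $x=X_{2t}(y_0,\xi)$ and $s\mapsto X_s(y_0,\xi)$, $0\le s\le2t$, realises exactly the reflections of $J$.

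Next I combine this with (\ref{eq:UinfJ}) of Lemma \lemref{supprought} and with (\ref{eq:suppq}), which together give $(y_0,\xi)\in\text{Supp}\,q\subset\mathcal{\hat{T}}_{2\epsilon|\log h|}(\mathcal{U}_\infty)\cap\{|\xi|\in[\alpha_0,\beta_0]\}$. By definition of $\mathcal{\hat{T}}_{2\epsilon|\log h|}(\mathcal{U}_\infty)$ — together with the fact that a billiard trajectory between two strictly convex bodies which leaves a sufficiently thin cylindrical neighborhood of $\mathcal{R}$ never returns to it — the trajectory $s\mapsto X_s(y_0,\xi)$ stays in $\mathcal{U}_\infty$ for all $0\le s\le2\epsilon|\log h|$. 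Since $2t\le2\epsilon|\log h|$, its endpoint $x=X_{2t}(y_0,\xi)$ lies in $\mathcal{U}_\infty$, which is what we wanted; here we also use, as recalled above, the transversality $-n(X^i(\cdot,\xi))\cdot\Xi^i(\cdot,\xi)\ge2\delta_1$, $i\le M$, of rays issued from $\mathcal{U}_\infty$ in directions of $\pi\,\text{Supp}\,q$, and Proposition \propref{opens}, which guarantee $\mathcal{U}_\infty\subset\mathcal{U}_J(\varphi)$ so that the conclusion is consistent with the domain of the formula.

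I expect the crux to be the middle step: matching the trapping time of $\text{Supp}\,q$ — equal to $2\epsilon|\log h|$ by construction of $\tilde q_{\epsilon,h}$, the factor $2$ accounting for the Schr\"{o}dinger velocity $2\xi$ — with the flow time $2t$ needed to reach $x$, and checking that $(y_0,\xi)\in\mathcal{\hat{T}}_{2\epsilon|\log h|}(\mathcal{U}_\infty)$ genuinely forces the whole intermediate trajectory, not just its endpoints, to remain inside $\mathcal{U}_\infty$. The remaining points are bookkeeping: verifying that the two conventions built into $\hat{X}_{-2t}$ (ignoring a wrong first obstacle, ignoring obstacles past the $|J|$-th reflection) are inactive precisely when $J(x,\xi,t)=\emptyset$ and $x\in\mathcal{U}_J(\varphi)$, and arranging the uniformity of $\mathcal{U}_\infty\subset\mathcal{U}_J(\varphi)$ in the parameters $\xi,y$.
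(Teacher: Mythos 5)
Your argument starts along the same lines as the paper's (reduce to $x$ near $\partial\mathcal{U}_J(\varphi)$ with $w_k^J(x,t)\neq 0$, push to $y_0:=\hat{X}_{-2t}(x,|\xi|\nabla\varphi_J)\in\text{Supp}\,q$ via (\ref{eq:UinfJ})--(\ref{eq:suppJ}), and use that $\text{Supp}\,q\subset\mathcal{\hat{T}}_{2\epsilon|\log h|}(\mathcal{U}_\infty)$), but there is a genuine gap at the middle step. You assert that on the whole interval $[0,2t]$ the auxiliary flow $\hat{X}$ coincides with the true billiard flow, so that $x=X_{2t}(y_0,\xi)$, and then let the trapping of $(y_0,\xi)$ place $x$ in $\mathcal{U}_\infty$. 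But $\hat{X}$ and the genuine flow started at $y_0$ only agree up to the time $2s_0$ of the $|J|$-th reflection; after that moment $\hat{X}$ ignores obstacles and continues in a straight line \emph{by construction}, while the true billiard trajectory (which is close to $\mathcal{R}$, hence aimed at the other obstacle) reflects again. When $x\in\mathcal{U}_J(\varphi)$ lies beyond $\Theta_{j_{n'}}$, in the shadow produced by that missed reflection, one has $\hat{X}_{2t}(y_0,\xi)=x\neq X_{2t}(y_0,\xi)$: the trapping of $(y_0,\xi)$ then says nothing about $x$, and $x\notin\mathcal{U}_\infty$ in general. Such points do occur for $t$ up to $\epsilon|\log h|$ (they are exactly the ones whose contribution to $w^J$ is later cancelled by $w^{J'}$, $J'=(J,j_{n'})$, in the Neumann series), so they cannot be discarded. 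Indeed, the statement you set out to prove --- that $w_k^J$ is supported in $\mathcal{U}_\infty$ --- is the content of \lemref{supportchi}, which only holds \emph{after} multiplying by $\chi_+$, a cutoff supported in $\text{Conv}(\Theta_1\cup\Theta_2)\setminus(\Theta_1\cup\Theta_2)$ chosen precisely to exclude the shadow.

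The paper's proof avoids the problem by stopping at time $2s_0$: the trapping property places the last reflection point $\hat{X}_{2s_0}(y_0,\xi)=X_{2s_0}(y_0,\xi)$ in $\mathcal{U}_\infty\cap\partial\Theta_{j_n}$, a set strictly contained in $\Gamma_{j_n}$, and the strict convexity of the obstacle is then invoked to show that the outgoing ray $\{\hat{X}_{2s}(y_0,\xi):s\geq s_0\}$ stays uniformly away from the tangent rays that form $\partial\mathcal{U}_J(\varphi)$ for \emph{all} $s\geq s_0$, shadow region included. That convexity argument is the extra ingredient your proof is missing. Two lesser points: the ``non-return'' property you invoke to upgrade membership in $\mathcal{\hat{T}}_{2\epsilon|\log h|}(\mathcal{U}_\infty)$ to ``stays in $\mathcal{U}_\infty$ up to time $2\epsilon|\log h|$'' is used implicitly in the paper as well, and it is good that you make it explicit, but it needs a justification rather than a bare assertion; and the uniform compact containment $\overline{\mathcal{U}_\infty}\subset\mathcal{U}_J(\varphi)$ for all $J,y,\xi$ that you need to close the argument does not follow from $\mathcal{U}_\infty\subset\mathcal{U}_J(\varphi)$ alone.
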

\begin{proof}
It suffices to show that $w_{k}^{J}(x,t)=0$ for $x\in\mathcal{U}_{J}(\varphi)$
near the border of $\mathcal{U}_{J}(\varphi)$. By (\ref{eq:UinfJ}),
it thus suffices to prove that for $x$ close to the border of $\mathcal{U}_{J}(\varphi$),
$(\hat{X}_{-2t}(x,\nabla\varphi_{J}(x)|\xi|),\xi)\notin\text{Supp}q$,
or equivalently, that $(\hat{X}_{-2t}(x,\nabla\varphi_{J}(x)|\xi|),\xi)\in\text{Supp}q$
implies that $x$ is away from the border of $\mathcal{U}_{J}(\varphi)$.
To this purpose, let us pick $x$ such that $(\hat{X}_{-2t}(x,\nabla\varphi_{J}(x)|\xi|),\xi)\in\text{Supp}q$.

In time $s\leq t\leq\epsilon|\log h|$, we follow $\hat{X}_{2s}(\hat{X}_{-2t}(x,\nabla\varphi_{J}(x)|\xi|),\xi)$.
Let $s_{0}$ be such that $\hat{X}_{2s_{0}}(\hat{X}_{-2t}(x,\nabla\varphi_{J}(x)|\xi|),\xi)$
belongs to $\partial(\Theta_{1}\cup\Theta_{2})$ and has been reflected
exactly $|J|$ times. For $s\geq s_{0}$, $\hat{X}_{s}$ will ignore
the obstacles. Because $(\hat{X}_{-2t}(x,\nabla\varphi_{J}(x)|\xi|),\xi)\in\mathcal{\hat{T}}_{2\epsilon|\log h|}(\mathcal{U}_{\infty})$
and $s_{0}\leq\epsilon|\log h|$, we know that 
\[
\hat{X}_{2s_{0}}(\hat{X}_{-2t}(x,\nabla\varphi_{J}(x)|\xi|),\xi)\in\mathcal{U}_{\infty}.
\]
 But $\mathcal{U}_{\infty}\cap\partial\Theta_{i}$ is strictly included
in $\Gamma_{i}$ by our construction. Moreover, by the convexity of
the obstacles $|\Xi_{2s_{0}}(\hat{X}_{-2t}(x,\nabla\varphi_{J}(x)|\xi|),\xi)\cdot e|\leq|\nabla\varphi_{J}(y)\cdot e|$
for all $y\in\Gamma_{i}\backslash(\mathcal{U}_{\infty}\cap\partial\Theta_{i})$.
Therefore, for all $s\geq s_{0}$, $\hat{X}_{2s}(\hat{X}_{-2t}(x,\nabla\varphi_{J}(x)|\xi|),\xi)$
is away from the border of $\mathcal{U}_{J}(\varphi)$, and in particular
$x=\hat{X}_{2t}(\hat{X}_{-2t}(x,\nabla\varphi_{J}(x)|\xi|),\xi)$,
so the proposition hold.
\end{proof}
Thus, Propositions \propref{solref} and \propref{solref2} together
gives the explicit expression for $w_{k}^{J}$ in the full space in
logarithmic times. Now, the following two lemmas investigate respectively
the temporal and spatial support of $\chi_{+}w_{k}^{J}$:
\begin{lem}
\label{lem:support}There exists $c_{1},c_{2}>0$ such that for every
$J\in\mathcal{I}$, the support of $w_{k}^{J}$ is included in $\left\{ c_{1}|J|\leq t\right\} $
and which of $\chi_{+}w_{k}^{J}$ is included in $\left\{ c_{1}|J|\leq t\leq c_{2}(|J|+1)\right\} $.
\end{lem}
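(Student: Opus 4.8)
The plan is to read both inclusions off the explicit formulas of Proposition \propref{solref}, using the support information of Lemma \lemref{supprought} together with Proposition \propref{solref2} to locate where $w_k^J$ can be nonzero, and then to translate ``the backward trajectory reaching $\text{Supp}\,q$ has undergone $|J|$ reflections'' into a bound on $t$ by elementary billiard geometry. Fix once and for all a bounded convex set $K$ containing $\mathcal{U}_\infty$ and $\Theta_1\cup\Theta_2$, write $D_0=\text{diam}(K)$, let $d>0$ be the distance between the two obstacles, and let $\rho>0$ be a lower bound for the distance between the (fixed) spatial support of $q$ and $\partial(\Theta_1\cup\Theta_2)$, which is positive by (\ref{eq:suppq}). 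Recall from the transport formula of Lemma \lemref{allongrays}, and from the expression of $w_0^\emptyset$, that all trajectories along which the $w_k^J$ propagate travel at speed $2|\xi|$ with $|\xi|\in[\alpha_0,\beta_0]$.

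\emph{Lower bound.} For $0\le t\le\epsilon|\log h|$, Proposition \propref{solref2} shows $w_k^J$ vanishes outside $\mathcal{U}_J(\varphi)$, and on $\mathcal{U}_J(\varphi)$ Lemma \lemref{supprought} gives: if $w_k^J(x,t)\neq0$, then $(\hat X_{-2t}(x,|\xi|\nabla\varphi_J),\xi)\in\text{Supp}\,q$ and $J(x,\xi,t)=\emptyset$; in other words, the backward trajectory issued from $x$ in the direction $\nabla\varphi_J$ undergoes exactly $|J|$ genuine reflections on $[0,t]$ before reaching a point of $\text{Supp}\,q$. (The convention by which $\hat X$ may ignore the first obstacle drops at most one reflection, and the remaining $|J|$ still alternate between $\Theta_1$ and $\Theta_2$.) Each of the $|J|-1$ segments joining two consecutive reflection points joins the two obstacles and so has length $\ge d$, while the last segment, reaching $\text{Supp}\,q$, has length $\ge\rho$; hence this trajectory has total length $\ge(|J|-1)d+\rho\ge|J|\min(d,\rho)$. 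Travelled at speed $\le2\beta_0$, this forces $t\ge c_1|J|$ with $c_1:=\min(d,\rho)/(2\beta_0)$, which is the first inclusion. (For $J=\emptyset$ the inclusion is vacuous.)

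\emph{Upper bound for $\chi_+w_k^J$.} Suppose moreover that $x\in\text{Supp}\,\chi_+\subset\text{Conv}(\Theta_1\cup\Theta_2)\setminus(\Theta_1\cup\Theta_2)\subset K$. Run the trajectory forward: it leaves a point of $\text{Supp}\,q$, which lies in $K$, at time $0$, reflects at successive times $0<s_1<\dots<s_{|J|}$, and reaches $x$ at time $t$. The starting point and all reflection points lie in the convex set $K$ of diameter $D_0$, so each of the $|J|$ segments up to the last reflection has length $\le D_0$ and is travelled at speed $\ge2\alpha_0$, whence $s_{|J|}\le|J|D_0/(2\alpha_0)$. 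After the $|J|$-th reflection, $w^J$ solves the free problem exterior to $\Theta_{j_n}$ alone, so on $[s_{|J|},t]$ the trajectory is the straight segment from the last reflection point, which lies in $\partial\Theta_{j_n}\subset K$, to $x\in K$; by convexity of $K$ this segment has length $\le D_0$, so $t-s_{|J|}\le D_0/(2\alpha_0)$. Adding the two estimates, $t\le(|J|+1)D_0/(2\alpha_0)=:c_2(|J|+1)$; together with the lower bound this gives $c_1|J|\le t\le c_2(|J|+1)$ on $\text{Supp}(\chi_+w_k^J)$. For $J=\emptyset$ one argues directly: $\chi_+w_k^\emptyset\neq0$ forces the backward free trajectory of duration $t$ from $x\in K$ to reach $\text{Supp}\,q\subset K$, hence $2t|\xi|\le D_0$ and $t\le c_2$.

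\emph{Main difficulty.} The argument is geometric bookkeeping rather than analysis; the only point needing care is that for $k\ge1$ the formula for $w_k^J$ is an iterated Duhamel integral over terms $w_{k-1}^{J(x,\xi,t-s)}$ carrying shorter stories, so one must check that the reflection count $|J|$ is not lost along the recursion. But this is precisely what the induction in Lemma \lemref{supprought} establishes, propagating the statement that $\hat X_{-2t}$ still carries the full story $J$ whenever $w_k^J(x,t)\neq0$; granted that, the lemma follows from the two length-versus-speed comparisons above.
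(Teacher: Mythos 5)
Your proof is correct and follows essentially the same approach as the paper: both derive the lower bound from the minimum segment length (distance between obstacles and distance of $\text{Supp}\,q$ to the boundary, divided by the maximal speed $2\beta_0$) and the upper bound from the maximum segment length divided by the minimal speed $2\alpha_0$, using Lemma \lemref{supprought} to pin down the number of reflections on the backward trajectory. Your use of $\min(d,\rho)$ for $c_1$ is in fact slightly cleaner than the paper's choice, which implicitly relies on $\delta_0\le d$.
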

\begin{proof}
Recall that $q$ is supported away from $\partial\left(\Theta_{1}\cup\Theta_{2}\right)$.
Let 
\[
\delta_{0}=d(\text{Supp}q,\partial\left(\Theta_{1}\cup\Theta_{2}\right))>0.
\]
In order to be reflected $|J|$ times and then be in the support of
$q$, a ray with speed in $[\alpha_{0},\beta_{0}]$ needs a time a
least $\frac{d}{2\beta_{0}}(|J|-1)+\frac{\delta_{0}}{2\beta_{0}}$.
Moreover, a ray starting in the support of $\chi_{+}$ needs a time
a most $2\frac{D}{\alpha_{0}}(|J|+1)$ to be reflected $|J|$ times,
where $D$ is the maximal distance between the obstacles, and then
it leaves the support of $q$ in a time at most $2\frac{d}{\alpha_{0}}$.
Therefore, by \lemref{supprought}, the temporal support of $\chi_{+}w_{k}^{J}$
is included in 
\[
\frac{d}{2\beta_{0}}(|J|-1)+\frac{\delta_{0}}{2\beta_{0}}\leq t\leq2\frac{D}{\alpha_{0}}(|J|+1)+2\frac{d}{\alpha_{0}},
\]
and which of $w_{k}^{J}$ in 
\[
\frac{d}{2\beta_{0}}(|J|-1)+\frac{\delta_{0}}{2\beta_{0}}\leq t
\]
and the lemma holds with (say) $c_{1}=\frac{\delta_{0}}{2\beta_{0}}$
and $c_{2}=4\frac{D}{\alpha_{0}}$.
\end{proof}
\begin{lem}
\label{lem:supportchi}In times $0\leq t\leq\epsilon|\log h|$, $\chi_{+}w_{k}^{J}$
is supported in $\mathcal{U}_{\infty}$.
\end{lem}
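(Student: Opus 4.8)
The plan is to read off the support of $\chi_{+}w_{k}^{J}$ from the two support properties of \lemref{supprought}, combined with the geometric description $\text{Supp}\,q\subset\mathcal{\hat{T}}_{2\epsilon|\log h|}(\mathcal{U}_{\infty})$ from (\ref{eq:suppq}). Fix $(x,t)$ with $0\le t\le\epsilon|\log h|$ and $\chi_{+}(x)w_{k}^{J}(x,t)\neq0$, and set $y:=\hat{X}_{-2t}(x,|\xi|\nabla\varphi_{J}(x))$. By \propref{solref2}, $w_{k}^{J}$ vanishes outside $\mathcal{U}_{J}(\varphi)$ in this time range, so (\ref{eq:UinfJ}) of \lemref{supprought} applies and gives $(y,\xi)\in\text{Supp}\,q\subset\mathcal{\hat{T}}_{2\epsilon|\log h|}(\mathcal{U}_{\infty})$; in particular $y\in\mathcal{U}_{\infty}$. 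By (\ref{eq:suppJ}) we also have $J(x,\xi,t)=\emptyset$, i.e.\ the trajectory defining $\hat{X}$ has been reflected exactly $|J|$ times between time $0$ and time $t$, none having been ignored for the ``more than $|J|$ reflections'' reason.

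First I would eliminate the remaining discrepancy between $\hat{X}$ and the genuine billiard flow. The only way they can differ along the trajectory joining $y$ to $x$ is through the clause ``ignore the first obstacle encountered if it is not $\Theta_{j_{n}}$'' in the definition of $\hat{X}$; but whenever this clause is active the endpoint $x$ lies inside, or on the far side of, one of the two obstacles, hence either in $\Theta_{1}\cup\Theta_{2}$ or in a fixed neighbourhood of $\partial(\Theta_{1}\cup\Theta_{2})$, and in both cases outside the support of $\chi_{+}$, which sits in $\text{Conv}(\Theta_{1}\cup\Theta_{2})\setminus(\Theta_{1}\cup\Theta_{2})$ away from $\partial(\Theta_{1}\cup\Theta_{2})$ (this is the same kind of reduction used in the proofs of \lemref{support} and \propref{solref2}). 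Consequently $x=X_{2t}(y,\xi)$ is a point of the true billiard orbit issued from $(y,\xi)$, reached after exactly $|J|$ reflections, so the flow time $t$ falls between the $|J|$-th reflection of that orbit and its $(|J|+1)$-th one, both occurring within a time $\epsilon|\log h|+O(1)<2\epsilon|\log h|$.

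It then remains to check that this piece of the orbit stays inside the cylinder $\mathcal{U}_{\infty}$. Since $(y,\xi)\in\mathcal{\hat{T}}_{2\epsilon|\log h|}(\mathcal{U}_{\infty})$, the forward orbit of $(y,\xi)$ is near $\mathcal{R}$ again at some time $\ge2\epsilon|\log h|$; by the dispersing dynamics of the billiard between two strictly convex obstacles --- the hyperbolicity input of \cite{Ikawa2,IkawaMult,plaques} already used in \propref{solref2} --- once the transverse deviation of the orbit from $\mathcal{R}$ exceeds the radius of $\mathcal{U}_{\infty}$ at a reflection it can only keep growing, so the orbit misses an obstacle, escapes to infinity, and cannot be near $\mathcal{R}$ again; hence it has not left $\mathcal{U}_{\infty}$ at any reflection up to time $2\epsilon|\log h|$ (shrinking $\mathcal{U}_{\infty}$ if necessary to absorb the $O(1)$ factors coming from the contracting direction). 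In particular the $|J|$-th and $(|J|+1)$-th reflection points, which bracket the flow time $t$, lie in $\mathcal{U}_{\infty}$; as $\mathcal{U}_{\infty}$ is a convex cylinder, the free-flight segment joining them --- which contains $x$ --- is contained in $\mathcal{U}_{\infty}$, so $x\in\mathcal{U}_{\infty}$, as claimed.

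The one genuinely delicate point is the non-escape statement of the last paragraph: a trajectory that is close to $\mathcal{R}$ at two far-apart times cannot leave $\mathcal{U}_{\infty}$ in between. Everything else --- the reduction through \lemref{supprought}, the removal of the modified-flow clauses via $\chi_{+}$, and the convexity argument --- is routine bookkeeping. Since this non-escape property has already been invoked in the proof of \propref{solref2}, no new dynamical input is needed here.
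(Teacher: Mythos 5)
Your proof is correct and, at bottom, identical in content to the paper's: both hinge on the support properties (\ref{eq:UinfJ})--(\ref{eq:suppJ}) of \lemref{supprought} and on the inclusion $\text{Supp}\,q\subset\mathcal{\hat{T}}_{2\epsilon|\log h|}(\mathcal{U}_{\infty})$ from (\ref{eq:suppq}). The difference is presentational. The paper argues by contrapositive: fix $x\in\text{Supp}\,\chi_{+}$ with $x\notin\mathcal{U}_{\infty}$, note that $\hat{X}_{-2t}$ coincides with the honest backward flow $X_{-2t}$ as long as the trajectory stays in $\text{Conv}(\Theta_{1}\cup\Theta_{2})\setminus(\Theta_{1}\cup\Theta_{2})$ and the $(|J|+1)$-th reflection has not occurred, and then reads off $(\hat{X}_{-2t}(x,\cdot),\xi)\notin\mathcal{\hat{T}}_{2t}(\mathcal{U}_{\infty})\supset\text{Supp}\,q$ directly from the trapped-set definition. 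You instead go forward from $(y,\xi)\in\text{Supp}\,q$, invoke explicitly the hyperbolic non-escape property of the two-convex-body billiard, and close with a convexity observation on the free-flight segment containing $x$. That non-escape fact is exactly what the paper buries inside the trapped-set bookkeeping (the assertion ``$(X_{-2t}(x,\cdot),\xi)\notin\mathcal{\hat{T}}_{2t}(\mathcal{U}_{\infty})$'' and the analogous step in the proof of \propref{solref2} both rely on it), so your version surfaces an implicit hypothesis rather than adding a new one, at the cost of a slightly longer argument.
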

\begin{proof}
Let $x\in\text{Supp}\chi_{+}$ and suppose that $x\notin\mathcal{U}_{\infty}$. 

By (\ref{eq:suppJ}), we only have to consider $\hat{X}_{-2t}(x,\nabla\varphi_{J}(x)|\xi|)$
after $|J|$ reflections. Recall that $x\in\text{Conv}(\Theta_{1}\cup\Theta_{2})\backslash(\Theta_{1}\cup\Theta_{2})$.
Therefore, before $X_{-2t}$ has done a $|J|+1$'th reflection, $\hat{X}_{-2t}(x,\nabla\varphi_{J}(x)|\xi|)$
coincide with $X_{-2t}(x,\nabla\varphi_{J}(x)|\xi|)$. Moreover, after
$|J|$ reflections, $\Xi_{-2t}(x,\nabla\varphi_{J}(x)|\xi|)=\xi$.
But, by definition of the trapped set, as $x\notin\mathcal{U}_{\infty}$,
$(X_{-2t}(x,\nabla\varphi_{J}(x)|\xi|),\xi)\notin\mathcal{\hat{T}}_{2t}(\mathcal{U}_{\infty})$.
For $t\leq\epsilon|\log h|$, $\mathcal{\hat{T}}_{2t}(\mathcal{U}_{\infty})\supset\mathcal{\hat{T}}_{2\epsilon|\log h|}(\mathcal{U}_{\infty})\supset\text{Supp}q$
and therefore 
\[
(\hat{X}_{-2t}(x,\nabla\varphi_{J}(x)|\xi|),\xi)=(X_{-2t}(x,\nabla\varphi_{J}(x)|\xi|),\xi)\notin\text{Supp}q.
\]

Moreover, after $X_{-2t}(x,\nabla\varphi_{J}(x)|\xi|)$ has done a
$|J|+1$'th reflection and stop to coincide with $\hat{X}_{-2t}(x,\nabla\varphi_{J}(x)|\xi|)$,
$\hat{X}_{-2t}(x,\nabla\varphi_{J}(x)|\xi|)$ ignore the obstacles
and leaves the support of $q$: in both cases we have $(\hat{X}_{-2t}(x,\nabla\varphi_{J}(x)|\xi|),\xi)\notin\text{Supp}q$.
By (\ref{eq:UinfJ}), this implies that $w_{k}^{J}(x,t)=0$. $\chi_{+}w_{k}^{J}$
is thus supported in $\mathcal{U}_{\infty}$ in times $0\leq t\leq\epsilon|\log h|$.
\end{proof}

\subsection{The $\xi$ derivatives}

Let us now investigate the derivatives of the phases with respect
to $\xi$. We show the following non-degeneracy property in order
to be able to perform a stationary phase on the solutions we are building:
\begin{lem}
\label{lem:nondeg}Let $J\in\mathcal{I}$ and $\mathcal{S}_{J}(x,t,\xi):=\varphi_{J}(x,\xi)|\xi|-t\xi^{2}$.
For all $t>0$ and there exists at most one $s_{J}(x,t)$ such that
$D_{\xi}\mathcal{S}_{J}(x,t,s_{J}(x,t))=0$. Moreover, for all $t_{0}>0$,
there exists $c(t_{0})>0$ such that, for all $t\geq t_{0}$ and all
$J\in\mathcal{I}$
\begin{equation}
w^{J}(x,t,\xi)\neq0\implies|\det D_{\xi}^{2}\mathcal{S}_{J}(x,t,\xi)|\geq c(t_{0})>0.\label{eq:detpo}
\end{equation}
\end{lem}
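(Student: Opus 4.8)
The plan is to read the critical points of $\mathcal{S}_J(x,t,\cdot)$ off a Fermat/eikonal description of the phase and then to deduce non-degeneracy from the convexity conditions $(P)$ of the reflected phases. The first step is an identity for the $\xi$-gradient: differentiating the defining relation $\varphi_J(X^1+\tau\Xi^1)=\varphi(X^1)+\tau$ of the reflected phases along the ray, and using at each reflection point that the increment of direction is parallel to the normal while the reflection point itself varies tangentially to the obstacle (the same cancellation already implicit in the construction of the $\varphi_J$'s, together with the fundamental fact $\nabla_x\varphi_J=\Xi^1$), one gets
\[
\nabla_\xi\big(\varphi_J(x,\xi)|\xi|\big)=\varphi_J(x,\xi)\frac{\xi}{|\xi|}+\big(w_J(x,\xi)-y\big),
\]
where $w_J(x,\xi)$ is the point at which the backward broken ray issued from $x$ with direction $\nabla_x\varphi_J(x,\xi)$ and reflection history $J$ meets the initial plane $\{z:(z-y)\cdot\xi=0\}$; in particular $w_J-y\perp\xi$ and $w_J$ depends on $\xi$ only through $\xi/|\xi|$. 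Hence
\[
\nabla_\xi\mathcal{S}_J(x,t,\xi)=\big(w_J(x,\xi)-y\big)+\big(\varphi_J(x,\xi)-2t|\xi|\big)\frac{\xi}{|\xi|}.
\]
The two summands are orthogonal, so $\nabla_\xi\mathcal{S}_J=0$ forces simultaneously $w_J(x,\xi)=y$ and $\varphi_J(x,\xi)=2t|\xi|$. The first equation says that the broken ray from $y$ with history $J$ reaches $x$; since on strictly convex obstacles two backward rays from $x$ with distinct directions stay divergent under reflection, such a connecting ray — hence the direction $\xi/|\xi|$ — is unique when it exists (this is the same defocusing principle that underlies \lemref{distm} and the single-valuedness of the $\varphi_J$'s), and then $\varphi_J(x,\xi)=2t|\xi|$ fixes $|\xi|$. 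This gives the uniqueness of $s_J(x,t)$.

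For the Hessian bound, I would use the homogeneity of $\Phi_J(x,\xi):=\varphi_J(x,\xi)|\xi|$ of degree one in $\xi$. Euler's relation gives $D_\xi^2\Phi_J\cdot\xi=0$, so $\xi$ is an eigenvector of $D_\xi^2\mathcal{S}_J=D_\xi^2\Phi_J-2t\,\mathrm{Id}$ with eigenvalue $-2t$, and $\xi^\perp$ is invariant with
\[
\det D_\xi^2\mathcal{S}_J=(-2t)\,\det\big(D_\xi^2\Phi_J|_{\xi^\perp}-2t\,\mathrm{Id}_{\xi^\perp}\big).
\]
Differentiating the gradient identity once more expresses $D_\xi^2\Phi_J|_{\xi^\perp}$ through the derivative, with respect to the initial direction, of the exit point $w_J$ (equivalently of the last reflection point on $\partial(\Theta_1\cup\Theta_2)$) along the backward billiard flow; condition $(P)(1)$ — non-negativity of the principal curvatures of the level surfaces of $\varphi_J$ with respect to $-\nabla\varphi_J$ — means exactly that no caustic develops along the reflected rays, and, after tracking signs, that $D_\xi^2\Phi_J|_{\xi^\perp}$ is negative semi-definite. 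Therefore $D_\xi^2\mathcal{S}_J|_{\xi^\perp}\preceq-2t\,\mathrm{Id}$, all three eigenvalues of $D_\xi^2\mathcal{S}_J$ are $\le-2t$, and $|\det D_\xi^2\mathcal{S}_J|\ge(2t)^3\ge(2t_0)^3=:c(t_0)$ for $t\ge t_0$. The case $J=\emptyset$ is the trivial sub-case $\Phi_\emptyset=(x-y)\cdot\xi$, $D_\xi^2\mathcal{S}_\emptyset=-2t\,\mathrm{Id}$. Note that on $\{w^J\neq0\}$ with $t\ge t_0$ the relevant backward ray stays in the convex-scattering region near the trapped ray (by \lemref{supprought}, \lemref{supportchi} and the inclusion $\mathrm{Supp}\,q\subset\hat{\mathcal{T}}_{2\epsilon|\log h|}(\mathcal{U}_\infty)$), so the $(P)$ conditions do apply there.

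The main obstacle is precisely the sign statement in the second step: converting the geometric hypothesis $(P)(1)$ into the semi-definiteness of $D_\xi^2\Phi_J|_{\xi^\perp}$. I would handle this by propagating the second fundamental form of the reflected wavefronts along the rays via its Riccati equation: the initial wavefront is a plane, so the form vanishes at the start, it stays non-negative under free propagation, and it is only increased by reflection on a strictly convex obstacle — the Gaussian curvatures $G\varphi_J$ occurring in \propref{solref} are the determinants of these forms, non-negative and in fact uniformly positive after the first reflection because the obstacles are strictly convex — and then relate this form to $D_\xi^2\Phi_J|_{\xi^\perp}$ through the role of $\Phi_J$ as a generating function of the billiard flow. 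Uniformity of $c(t_0)$ in $J$ and $h$ is then automatic, since only a sign is used; alternatively, one may invoke directly that the billiard flow between two strictly convex bodies has no conjugate points along trajectories confined to the convex-scattering region, the requisite uniform quantitative control being exactly the divergence estimate of \lemref{distm}.
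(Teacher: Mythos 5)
Your treatment of the critical-point part is correct and essentially equivalent to the paper's. The paper transports $D_\xi(\varphi_J|\xi|)$ along the backward broken ray via the eikonal equation and finds $D_\xi\mathcal{S}_J(x,\xi,t)=\hat{X}_{-2t}(x,|\xi|\nabla\varphi_J(x,\xi))-y$; your identity $\nabla_\xi\mathcal{S}_J=(w_J-y)+(\varphi_J-2t|\xi|)\xi/|\xi|$ is a re-expression of the same thing through the intersection $w_J$ with the initial plane, and the orthogonal decomposition into ``$w_J=y$'' and ``$\varphi_J=2t|\xi|$'' is a clean way to read off the critical-point equation. Both arguments then appeal to the same defocusing/divergence property for uniqueness of $s_J(x,t)$.

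The Hessian part is where you genuinely diverge, and this is where the gap lies. The paper's route: from $|\nabla_x\psi_J|^2=|\xi|^2$, differentiate twice in $\xi$, transport along the broken ray, and obtain the explicit formula
\[
D^2_\xi\mathcal{S}_J=\Bigl(\tfrac{l_J}{|\xi|}-2t\Bigr)\mathrm{Id}-\sum_{k}\int_0^{l_J/|\xi|}\bigl(D_\xi\partial_{x_k}\psi_{J^{(s)}}\bigr)\bigl(D_\xi\partial_{x_k}\psi_{J^{(s)}}\bigr)^t\,ds,
\]
in which the integral term is manifestly positive semi-definite. The hypothesis $w^J\neq0$ is then used in an essential way: $\hat{X}_{-2t}(x,\cdot)\in\mathrm{Supp}\,q$ lies at distance $\geq\delta_0$ from the boundary while $\hat{X}_{-l_J/|\xi|}(x,\cdot)\in\partial(\Theta_1\cup\Theta_2)$, so $2t-l_J/|\xi|\geq\delta_0/|\xi|\geq\delta_0/\beta_0$ and one concludes immediately $D^2_\xi\mathcal{S}_J\preceq-\tfrac{\delta_0}{\beta_0}\mathrm{Id}$, with a bound uniform in $J$ and independent of $t_0$. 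Your Euler-relation step is correct and gives the $-2t$ eigenvalue along $\xi$, but the additional claim you then need, namely $D^2_\xi\Phi_J|_{\xi^\perp}\preceq0$, is strictly stronger than what the paper's formula delivers (the paper only gives $D^2_\xi\Phi_J|_{\xi^\perp}\preceq\tfrac{l_J}{|\xi|}\mathrm{Id}_{\xi^\perp}$, which is not $\preceq0$). That sign is in fact equivalent, through the same explicit formula, to the inequality $\int_0^{l_J/|\xi|}|D_v\nabla_x\psi_{J^{(s)}}|^2ds\geq\tfrac{l_J}{|\xi|}|v|^2$ for $v\perp\xi$, i.e.\ that the angular spread is never contracted by the reflected flow. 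You acknowledge this as ``the main obstacle'' and offer a Riccati/generating-function sketch, but the step that converts $(P)(1)$ — a sign condition on the spatial Hessian $D^2_x\varphi_J$ restricted to the wavefront — into the sign of a $\xi$-Hessian is precisely the transport computation the paper carries out, so as written your argument loops back to it without closing it. The paper's approach is both shorter and avoids this altogether by consuming the hypothesis $w^J\neq0$ directly: with it, the crude bound $D^2_\xi\mathcal{S}_J\preceq(\tfrac{l_J}{|\xi|}-2t)\mathrm{Id}$ already suffices. Your route, if completed, would yield the stronger $|\det D^2_\xi\mathcal{S}_J|\geq(2t_0)^3$ without the support hypothesis, but as it stands the central sign assertion is not established.
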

\begin{proof}
For $J=\emptyset$, an explicit computation gives, for $t>0$, $s_{\emptyset}(x,t)=\frac{(x-y)}{2t}$
and (\ref{eq:detpo}) with $c(t_{0})=2t_{0}$. Thus we are reduced
to the case $|J|\geq1$. Let $J=(j_{1},\cdots,j_{n})$.

\textbf{Non degeneracy of $D^{2}\mathcal{S}_{J}$.} We first show
(\ref{eq:detpo}). Let $\psi_{J}(x,\xi)=\varphi_{J}(x,\xi)|\xi|$.
Because $|\nabla\psi_{J}|^{2}=|\xi|^{2}$, $\partial_{\xi_{i}\xi_{j}}^{2}\psi_{J}$
verifies the transport equation with source term
\[
\nabla\left(\partial_{\xi_{i}\xi_{j}}^{2}\psi_{J}\right)\cdot\nabla\psi_{J}=\delta_{ij}-\partial_{\xi_{i}}\nabla\psi_{J}\cdot\partial_{\xi_{j}}\nabla\psi_{J}.
\]
For $x\in\Gamma_{j_{,n}}$, we have $\nabla\psi_{J}(x+\tau\nabla\psi_{J}(x,\xi))=\nabla\psi_{J}(x,\xi)$
and therefore, $g(s)=\partial_{\xi_{i}\xi_{j}}^{2}\psi_{J}(x+s\nabla\psi_{J}(x,\xi),\xi)$
verifies
\[
g'(s)=\delta_{ij}-\partial_{\xi_{i}}\nabla\psi_{J}(x+s\nabla\psi_{J}(x,\xi),\xi)\cdot\partial_{\xi_{j}}\nabla\psi_{J}(x+s\nabla\psi_{J}(x,\xi),\xi)
\]
and thus
\begin{multline*}
\partial_{\xi_{i}\xi_{j}}^{2}\psi_{J}(x+\tau\nabla\psi_{J})=\partial_{\xi_{i}\xi_{j}}^{2}\psi_{J}(x)+\tau\delta_{ij}\\
-\int_{0}^{\tau}\partial_{\xi_{i}}\nabla\psi_{J}(x+s\nabla\psi_{J}(x,\xi),\xi)\cdot\partial_{\xi_{j}}\nabla\psi_{J}(x+s\nabla\psi_{J}(x,\xi),\xi)ds.
\end{multline*}
Noting that $\partial_{\xi_{i}\xi_{j}}^{2}\psi_{J}=\partial_{\xi_{i}\xi_{j}}^{2}\psi_{J'}$
on $\Gamma_{j_{n}}$ and iterating this argument up to $\partial_{\xi_{i}\xi_{j}}^{2}\psi_{\emptyset}=0$
on $\Gamma_{j_{1}}$ we get, for all $x\in\mathcal{U}_{J}(\varphi)$
\begin{multline*}
\partial_{\xi_{i}\xi_{j}}^{2}\psi_{J}(x,\xi)=\frac{l}{|\xi|}\delta_{ij}\\
-\int_{0}^{\frac{l}{|\xi|}}\partial_{\xi_{i}}\nabla\psi_{J^{(s)}}(X_{-s}(x,|\xi|\nabla\varphi_{J}(x,\xi)),\xi)\cdot\partial_{\xi_{j}}\nabla\psi_{J^{(s)}}(X_{-s}(x,|\xi|\nabla\varphi_{J}(x,\xi)),\xi)ds
\end{multline*}
where we denoted $J^{(s)}=J(x,\xi,s/2)$ with the notations of Proposition
\propref{solref} and $l=l_{J}(x,\xi)$ given by
\begin{multline*}
l_{J}(x,\xi)=d(x,X^{-1}(x,\nabla\varphi_{J}(x,\xi)))+d(X^{-1}(x,\nabla\varphi_{J}(x,\xi)),X^{-2}(x,\nabla\varphi_{J}(x,\xi)))\\
+\cdots+d(X^{-|J|-1}(x,\nabla\varphi_{J}(x,\xi)),X^{-|J|}(x,\nabla\varphi_{J}(x,\xi))).
\end{multline*}

Thus we have
\begin{multline*}
D_{\xi}^{2}\mathcal{S}_{J}(x,\xi,t)=(\frac{l}{|\xi|}-2t)Id\\
-\sum_{k=1}^{3}\int_{0}^{\frac{l}{|\xi|}}D_{\xi}\partial_{x_{k}}\psi_{J^{(s)}}(X_{-s}(x,|\xi|\nabla\varphi_{J}(x,\xi)),\xi)\left(D_{\xi}\partial_{x_{k}}\psi_{J^{(s)}}(X_{-s}(x,|\xi|\nabla\varphi_{J}(x,\xi)),\xi)\right)^{t}.
\end{multline*}
where by $y^{t}$ xe denoted the transposed of $y$. Now, remartk
that $w^{J}(x,t)\neq0$ implies by \lemref{supprought}
\[
\left(\hat{X}_{-2t}(x,|\xi|\nabla\varphi_{J}(x,\xi)),\xi\right)\in\text{Supp}q
\]
from the other hand
\[
\hat{X}_{-\frac{l}{|\xi|}}(x,|\xi|\nabla\varphi_{J}(x,\xi))\in\partial(\Theta_{1}\cup\Theta_{2})
\]
therefore, because $q$ is supported at distance at least $\delta_{0}>0$
of $\partial(\Theta_{1}\cup\Theta_{2})$, 
\begin{equation}
2t-\frac{l}{|\xi|}\geq\frac{\delta_{0}}{|\xi|}.\label{eq:2tmoinsl}
\end{equation}
Finally, remark that the matrices 
\[
D_{\xi}\partial_{x_{k}}\psi_{J}(X_{-s}(x,\nabla\varphi_{J}(x,\xi)),\xi)\left(D_{\xi}\partial_{x_{k}}\psi_{J}(X_{-s}(x,\nabla\varphi_{J}(x,\xi)),\xi)\right)^{t}
\]
are positives, and we conclude that, for $|J|\geq1$
\[
w^{J}(x,t)\neq0\implies\det D_{\xi}^{2}\mathcal{S}_{J}(x,\xi,t)\leq-\frac{\delta_{0}}{\beta_{0}}<0.
\]

\textbf{Critical points. }We now show the first part of the statement.
Differentiating $|\nabla\psi_{J}|^{2}=|\xi|^{2}$ we get in the same
way than before, for $x\in\Gamma_{j_{n}}$ 
\[
\partial_{\xi_{i}}(\varphi_{J}|\xi|)(x+\tau\nabla\varphi_{J}(x,\xi),\xi)=\partial_{\xi_{i}}(\varphi_{J'}|\xi|)(x,\xi)+\tau\frac{\xi_{i}}{|\xi|},
\]
and, iterating this argument up to $D_{\xi}(\varphi_{\emptyset}|\xi|)(x,\xi)=x-y$:
\[
D_{\xi}(\varphi_{J}|\xi|)(x,\xi)=X^{-|J|}(x,\nabla\varphi_{J}(x,\xi))-y+l_{J}(x,\xi)\frac{\xi}{|\xi|}.
\]
Therefore
\[
D_{\xi}\mathcal{S}_{J}(x,\xi,t)=X^{-|J|}(x,\nabla\varphi_{J}(x,\xi))-(2t-\frac{l_{J}(x,\xi)}{|\xi|})\xi-y,
\]
that is, if $w_{J}(x,t)\neq0$, by \lemref{supprought}
\[
D_{\xi}\mathcal{S}_{J}(x,\xi,t)=\hat{X}_{-2t}(x,|\xi|\nabla\varphi_{J}(x,\xi))-y.
\]
Therefore, if $D_{\xi}\mathcal{S}_{J}(x,\xi,t)=0$ then
\[
\hat{X}_{2t}(y,\xi)=x.
\]
and thus $\xi=s_{J}(x,t)$ is the vector allowing reaching $x$ from
the vector $y$ in time $2t$ beginning with a reflection on $\Theta_{j_{1}}$.
\end{proof}
Moreover, we will need to control the directional derivatives of $w_{k}^{J}$.
To this purpose we show 
\begin{prop}
\label{prop:decder}For all multi-indices $\alpha,\beta$ there exists
a constant $D_{\alpha,\beta}>0$ such that the following estimate
holds on $\mathcal{U}_{\infty}$:
\[
|D_{\xi}^{\alpha}D_{x}^{\beta}\nabla\varphi_{J}|\leq D_{\alpha,\beta}^{|J|}.
\]
\end{prop}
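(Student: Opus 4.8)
The point of Proposition \propref{decder} is to add the $\xi$--derivatives to Proposition \propref{contrder}, which already gives a bound on the purely spatial derivatives $D_x^\beta\nabla\varphi_J$ that is uniform in $J$ on $\mathcal U_\infty$. Since $\xi$ enters the whole reflected--phase construction only through the initial phase $\varphi(x)=(x-y)\cdot\xi/|\xi|$, i.e.\ through $\nabla\varphi_\emptyset(x,\xi)=\xi/|\xi|$ — a map whose $\xi$--derivatives of every order are bounded on $\{|\xi|\in[\alpha_0,\beta_0]\}$ — the plan is to write a one--step recursion in the number of reflections $|J|$ and then close an induction on the total order $m=|\alpha|+|\beta|$, the loss $D_{\alpha,\beta}^{|J|}$ being exactly the geometric growth produced by composing $|J|$ billiard maps.

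\textbf{The recursion.} Write $J=(j_1,\dots,j_n)$, $J'=(j_1,\dots,j_{n-1})$, so $\varphi_J=(\varphi_{J'})_{j_n}$. Using the fundamental relation $\nabla\varphi_j(X^1+\tau\Xi^1)=\Xi^1$ of the reflected--phase construction together with the fact that $\nabla\varphi_{J'}$ is constant along its (straight) rays, one gets, for $z\in\mathcal U_\infty$ and $\xi\in\pi\,\text{Supp}\,q$,
\[
\nabla\varphi_J(z,\xi)=\mathcal R_{j_n}\bigl(Y_J(z,\xi),\ \nabla\varphi_{J'}(Y_J(z,\xi),\xi)\bigr),
\]
where $\mathcal R_i(y,p)=p-2(n(y)\cdot p)\,n(y)$ is the reflection law on $\Theta_i$ (a fixed $C^\infty$ map with bounded derivatives) and $Y_J(z,\xi)\in\partial\Theta_{j_n}$ is the foot of the last reflection, characterised by $z=Y_J+\tau\,\mathcal R_{j_n}(Y_J,\nabla\varphi_{J'}(Y_J,\xi))$ for a unique $\tau>0$. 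On $\mathcal U_\infty$ the construction (Proposition \propref{opens} and the successive shrinkings) forces the uniform transversality $-n(X^i)\cdot\Xi^i\ge 2\delta_1>0$, and property $(P)$ together with the strict convexity of the obstacles forces all the level surfaces of the $\varphi_J$ to have principal curvatures in a fixed interval $[\kappa_{\min},\kappa_{\max}]$, uniformly in $J$; hence the flow--out $(Y,\tau)\mapsto Y+\tau\,\mathcal R_{j_n}(Y,\nabla\varphi_{J'}(Y,\xi))$ has no conjugate points, $Y_J$ is $C^\infty$ by the implicit function theorem, and — crucially — its first derivatives in $(z,\xi)$ are bounded by a constant \emph{independent of $J$} (the relevant Jacobian has entries controlled by $|D_x\nabla\varphi_{J'}|$, uniformly bounded by Proposition \propref{contrder}, and by $\tau$, which stays bounded since $z$ and $Y_J$ both lie in the fixed bounded set $\mathcal U_\infty\cup\partial(\Theta_1\cup\Theta_2)$, and it is uniformly invertible by $(P)$ and the transversality).

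\textbf{The induction on order.} For $m=0$, $|\nabla\varphi_J|=1$. Assume the bound at all orders $\le m-1$ (say with constants $D_{m-1}$) and differentiate the recursion $m$ times. Because $Y_J$ has $J$--uniformly bounded first derivatives, the Fa\`a di Bruno expansion of $D_\xi^\alpha D_x^\beta\bigl[\mathcal R_{j_n}(Y_J,\nabla\varphi_{J'}\circ(Y_J,\mathrm{id}))\bigr]$ consists of one top--order term, linear in $D_\xi^\alpha D_x^\beta\nabla\varphi_{J'}$ (evaluated at $Y_J$) with coefficient made of bounded derivatives of $\mathcal R_{j_n}$ and bounded powers of $DY_J$, plus a term linear in $D^mY_J$ (itself, by the implicit function theorem, linear in $D_\xi^\alpha D_x^\beta\nabla\varphi_{J'}$ with bounded coefficient), plus a universal polynomial in the derivatives of $\nabla\varphi_{J'}$ and $Y_J$ of order $\le m-1$. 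By the inductive hypothesis the last contribution is $\le A_m(D_{m-1})^{c|J|}$, whence
\[
\bigl|D_\xi^\alpha D_x^\beta\nabla\varphi_J\bigr|\ \le\ L_m\,\bigl|D_\xi^\alpha D_x^\beta\nabla\varphi_{J'}\bigr|\ +\ A_m\,(D_{m-1})^{c|J|},
\]
with $L_m\ge1$ and $A_m,c$ depending only on $m$ and the geometry. Iterating this inequality along the $|J|$ reflections down to $\nabla\varphi_\emptyset=\xi/|\xi|$ (whose derivatives are bounded) gives $\bigl|D_\xi^\alpha D_x^\beta\nabla\varphi_J\bigr|\lesssim L_m^{|J|}+|J|\,\max(L_m,D_{m-1}^{\,c})^{|J|}\le D_{\alpha,\beta}^{\,|J|}$ for $D_{\alpha,\beta}$ large enough, closing the induction.

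\textbf{Main obstacle.} The delicate point is the second step: establishing that the reflection foot--point $Y_J$ — equivalently, the backward billiard attached to the phase $\varphi_{J'}$ — is smooth with first derivatives bounded \emph{uniformly in $|J|$} on $\mathcal U_\infty$. This is exactly where one must use that $\mathcal U_\infty$ was built so that every ray issued from it with direction in $\pi\,\text{Supp}\,q$ meets the obstacles with angle bounded away from grazing and every $\varphi_J$ satisfies $(P)$ — so that no focusing ever occurs and the relevant Jacobians stay uniformly invertible as $|J|\to\infty$. Once this $J$--independent regularity of $Y_J$ is in hand, the rest is routine chain--rule bookkeeping in which only the linearity in the top--order derivative is used; alternatively, the same recursion can be obtained by differentiating the eikonal identity $|\nabla\varphi_J|=1$ and the matching conditions on $\Gamma_{j_n}$, in the spirit of the proof of Lemma \lemref{nondeg}.
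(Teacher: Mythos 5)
Your strategy — differentiate the one–step reflection identity $\nabla\varphi_J(z,\xi)=\mathcal R_{j_n}(Y_J,\nabla\varphi_{J'}(Y_J,\xi))$ and close a recursion in $|J|$ — is genuinely different from the paper's, which integrates the $\xi$–differentiated eikonal identity $\nabla(D_\xi^\alpha\varphi_J)\cdot\nabla\varphi_J=R_J$ along the broken ray down to $\varphi_\emptyset$ and then applies $D_x^\beta$. Unfortunately, the step you yourself flag as the ``main obstacle'' is not merely delicate: it is false, and the gap cannot be patched without changing the induction scheme.

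The claim that $Y_J$ has first derivatives in $(z,\xi)$ bounded \emph{uniformly in $J$} fails in the $\xi$--direction. From your own implicit characterisation $z=Y+\tau\,\mathcal R_{j_n}(Y,\nabla\varphi_{J'}(Y,\xi))$, the Jacobian $D_{(Y,\tau)}F$ is indeed uniformly invertible (this needs only $(P)$, the transversality $-n\cdot\Xi^i\ge 2\delta_1$, and the uniform bound $|D_x\nabla\varphi_{J'}|\lesssim 1$ from Proposition~\propref{contrder}), but the source term $D_\xi F=\tau\,D_p\mathcal R\cdot D_\xi\nabla\varphi_{J'}(Y,\xi)$ is proportional to $D_\xi\nabla\varphi_{J'}$, which is precisely the quantity being estimated and grows like $D_1^{|J|}$ — this is the hyperbolicity of the billiard and is exactly why the proposition allows a constant $D_{\alpha,\beta}^{|J|}$ at all. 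So $|D_\xi Y_J|\sim|D_\xi\nabla\varphi_{J'}|\sim D_1^{|J|}$, not $O(1)$.

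This breaks the induction on total order $m=|\alpha|+|\beta|$. In the Fa\`a di Bruno expansion of $D_\xi^\alpha D_x^\beta\bigl[\nabla\varphi_{J'}(Y_J,\xi)\bigr]$, terms such as $D_x^a D_\xi^{m-a}\nabla\varphi_{J'}\cdot(D_\xi Y_J)^a$ for $1\le a\le m$ are \emph{also} of total order $m$, so they are ``top order'' in your bookkeeping, yet they carry the unbounded coefficient $(D_\xi Y_J)^a\sim D_1^{a|J|}$. The recursion is therefore $E_{m,|J|}\lesssim D_1^{c|J|}E_{m,|J|-1}+(\text{lower})$, and iterating produces $D^{c|J|^2}$, not $D_{\alpha,\beta}^{|J|}$. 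The paper avoids this by inducting on $|\alpha|$ alone and treating all $\beta$ simultaneously: when exactly $|\alpha|$ $\xi$--derivatives land on the outer $\nabla\varphi_{J'}$, the only $Y_J$--factors in the coefficient are the purely spatial $D_z^a Y_J$, which are genuinely $J$--uniform (they see only $D_x\nabla\varphi_{J'}$, controlled by Proposition~\propref{contrder}); all the exponentially large $D_\xi Y_J$--factors only multiply strictly lower $\xi$--order derivatives of $\nabla\varphi_{J'}$, already bounded by the inductive hypothesis. Your one--step recursion would likewise close if you reorganised the induction on $|\alpha|$ rather than on $m$ and separated the coefficient of the top $\xi$--order term (which is uniform) from the lower $\xi$--order remainder (which is allowed to be $D^{c|J|}$); as written, the argument does not deliver the stated bound.
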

\begin{proof}
Remark that $|D_{x}^{\beta}\varphi_{J}|\leq C$ for all multi-indices
$\beta$. Thus, we will show the estimate by induction on the size
of $\alpha$. Let $\alpha$ be a multi-indice. We assume that $|D_{\xi}^{\gamma}D_{x}^{\eta}\varphi_{J}|\leq C^{|J|}$
for $|\gamma|\leq|\alpha|-1$ and all multi-indices $\eta$. Differentiating
$|\nabla\varphi_{J}|^{2}=1$ with respect to $D_{\xi}^{\alpha}$ ,
we get an equation of the form
\[
\nabla(D_{\xi}^{\alpha}\varphi_{J})\cdot\nabla\varphi_{J}=R_{J}
\]
where
\[
R_{J}=\sum_{|\gamma|,|\gamma'|\leq|\alpha|-1}a_{\gamma,\gamma'}D_{\xi}^{\gamma}\nabla\varphi_{J}\cdot D_{\xi}^{\gamma'}\nabla\varphi_{J}
\]
 with $a_{\gamma,\gamma'}\in\mathbb{Z}$. Let $x\in\Gamma_{j_{n}}$.
We denote $g(s)=D^{\alpha}\varphi_{J}(x+s\nabla\varphi_{J}(x,\xi),\xi)$.
Because $\nabla\varphi_{J}(x+s\nabla\varphi_{J}(x,\xi),\xi)=\nabla\varphi_{J}(x,\xi)$,
$g$ verifies $g'(s)=R(x+s\nabla\varphi_{J})$. Therefore we have
\[
D_{\xi}^{\alpha}\varphi_{J}(x+\tau\nabla\varphi_{J}(x,\xi),\xi)=D_{\xi}^{\alpha}\varphi_{J}(x,\xi)+\int_{0}^{\tau}R_{J}(x+s\nabla\varphi_{J}(x,\xi),\xi)ds.
\]
But $D_{\xi}^{\alpha}\varphi_{J}=D_{\xi}^{\alpha}\varphi_{J'}$ on
$\Gamma_{j_{n}}$, so iterating this process we get, for all $x\in\mathcal{U}_{\infty}$
\begin{gather*}
D_{\xi}^{\alpha}\varphi_{J}(x,\xi)=D_{\xi}^{\alpha}\varphi(X^{-|J|}(x,\nabla\varphi_{J}(x,\xi)),\xi,\xi)\\
-\int_{0}^{d_{1}}R_{J}(X_{-s}(x,\nabla\varphi_{J}(x,\xi)),\xi)ds-\int_{d_{1}}^{d_{2}}R_{J^{'}}(X_{-s}(x,\nabla\varphi_{J}(x,\xi)),\xi)ds\\
-...-\int_{d_{|J|-1}}^{d_{|J|}}R_{(j_{1})}(X_{-s}(x,\nabla\varphi_{J}(x,\xi)),\xi)ds.
\end{gather*}
with
\[
d_{1}=d(x,X^{-1}(x,\nabla\varphi_{J}(x,\xi)),d_{i}=d(X^{-i}(x,\nabla\varphi_{J}(x,\xi),X^{-i-1}(x,\nabla\varphi_{J}(x,\xi)).
\]
Now, we differentiate this identity with respect to $D_{x}^{\beta}$,
$\beta$ been arbitrary. Remark that
\begin{itemize}
\item The derivatives of $\nabla\varphi_{J}$ with respect to $x$ are bounded
uniformly with respect to $J$ by Proposition \propref{contrder},
\item for $x\in\mathcal{U}_{\infty}$ and $\xi\in\pi\text{Supp}q$, $(x,\nabla\varphi_{J})$
is in a compact set away from $W_{\text{tan}}$ introduced in \lemref{hold12}.
Therefore, by \lemref{hold12}, $X^{-1}$ is $C^{\infty}$ on this
compact set so his derivatives will be bounded by a constant $C$,
and theses of $X^{-i}=X^{-1}\circ\cdots\circ X^{-1}$ by $C^{i}$,
and all of them by $C^{|J|}$,
\item $d(x,X^{-1}(x,\nabla\varphi_{J}(x,\xi))$ is nothing but $t(x,\nabla\varphi_{J}(x,\xi))$
where $t$ is the function introduced \lemref{hold12}, which is $C^{\infty}$
away from $W_{\text{tan}}$,
\item $X_{-s}$ consists to follow the straight line between $X^{-i}$ and
$X^{-i-1}$ for $d_{i}\leq s\leq d_{i-1}$ (with the convention $d_{0}=0$)
\item $(x,y)\in\Theta_{1}\cup\Theta_{2}\rightarrow d(x,y)$ is $C^{\infty}$,
\item By the induction hypothesis the derivatives with respect to $x$ of
$D_{\xi}^{\gamma}\nabla\varphi_{(j_{1},\cdots,j_{k})}$ for $|\gamma|\leq|\alpha|-1$
are bounded by $C^{|J|}$.
\end{itemize}
So, the left hand side will be bounded by
\[
|D_{\xi}^{\alpha}D_{x}^{\beta}\varphi_{J}|\leq C+|J|C^{6|J|}\lesssim D^{|J|}
\]
with (say) $D=C^{7}$, and the lemma holds.
\end{proof}
Thus we have
\begin{cor}
\label{cor:boundsdirect}We following bounds hold on $\mathcal{U}_{\infty}$
\[
|D_{\xi}^{\alpha}w_{k}^{J}|\lesssim C_{\alpha}^{|J|}h^{-(2k+|\alpha|)c\epsilon}.
\]
\end{cor}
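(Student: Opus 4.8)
The plan is to prove, by induction on $k$, the stronger estimate: for all multi-indices $\alpha,\beta$,
\[
|D_\xi^\alpha D_x^\beta w_k^J(x,t,\xi)|\lesssim C_{\alpha,\beta}^{|J|}\,h^{-(2k+|\alpha|+|\beta|)c\epsilon}\qquad\text{on }\mathcal{U}_\infty,\ 0\le t\le\epsilon|\log h|,
\]
the corollary being the case $\beta=0$ (here $c$ may be enlarged once and for all, being only ever subject to finitely many constraints $c\epsilon\ll1$). Four uniform facts are used. First, \eqref{eq:contrq} together with the definition \eqref{eq:defq} of $q=q_{\epsilon,h,N}$ — whose $h^k$ prefactors are harmless for $\epsilon$ small — gives $|D_\xi^a D_x^b q|\lesssim h^{-2(|a|+|b|)c\epsilon}$. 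Second, \propref{decder} gives $|D_\xi^a D_x^b\nabla\varphi_J|\lesssim D_{a,b}^{|J|}$ on $\mathcal{U}_\infty$. Third, the amplitudes $\Lambda\varphi_J$ and $g_{\varphi_J}$ of \propref{solref} are products of at most $O(|J|)$ ratios of Gaussian curvatures of level surfaces of phases $\varphi_{J'}$ with $J'$ a sub-story of $J$; by \propref{contrder} the second fundamental forms of those level surfaces are controlled uniformly in $J'$, and by strict convexity they are bounded below on $\mathcal{U}_\infty$ once a reflection has occurred, so each factor — and, by \propref{decder} and the chain rule through the reflection points $X^{-i}$ (whose $\mathcal{U}_\infty$-derivatives are $\lesssim C^{|J|}$, exactly as in the proof of \propref{decder}), each of its $D_\xi^a D_x^b$-derivatives — is bounded, whence Leibniz over the $O(|J|)$ factors yields $|D_\xi^a D_x^b\Lambda\varphi_J|,\ |D_\xi^a D_x^b g_{\varphi_J}|\lesssim C_{a,b}^{|J|}$. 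Fourth, the (truncated) backward flow $(x,p)\mapsto\hat X_{-2t}(x,p)$ is a composition of at most $|J|$ billiard reflection maps and straight segments; along the trajectories that contribute — which, by \eqref{eq:UinfJ}, \eqref{eq:suppJ}, \eqref{eq:suppq} and the cylinder choice of $\mathcal{U}_\infty$, stay in $\mathcal{U}_\infty$ and away from $W_{\text{tan}}$ — each reflection map is $C^\infty$ (cf.\ \lemref{hold12} and the proof of \propref{decder}), so $|D^\gamma_{(x,p)}\hat X_{-2t}|\lesssim C_\gamma^{|J|}(1+t)^{|\gamma|}$, and composing with $p=|\xi|\nabla\varphi_J(x,\xi)$ via the second fact gives $|D_\xi^a D_x^b\hat X_{-2t}(x,|\xi|\nabla\varphi_J)|\lesssim C_{a,b}^{|J|}(1+t)^{|a|+|b|}$ (and likewise for $\hat X_{-2(t-s)}$).

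For $k=0$, \propref{solref} gives $w_0^J=\Lambda\varphi_J(x,\xi)\,q(\hat X_{-2t}(x,|\xi|\nabla\varphi_J),\xi)$ for $J\in\mathcal{I}$ (and $w_0^\emptyset=q(x-2t\xi,\xi)$), so Leibniz, the chain rule and the four facts yield $|D_\xi^\alpha D_x^\beta w_0^J|\lesssim C_{\alpha,\beta}^{|J|}h^{-2(|\alpha|+|\beta|)c\epsilon}(1+t)^{|\alpha|+|\beta|}$; since $t\le\epsilon|\log h|$ we have $(1+t)^{|\alpha|+|\beta|}\lesssim_{\alpha,\beta}|\log h|^{|\alpha|+|\beta|}\lesssim h^{-c\epsilon}$ for $h$ small, which is the claim at $k=0$ after enlarging $c$. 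For the inductive step we use the second identity of \propref{solref},
\[
w_k^J(x,t)=-i\int_0^t g_{\varphi_J}(x,t-s,\xi)\,\bigl(\Delta w_{k-1}^{J(x,\xi,t-s)}\bigr)\!\bigl(\hat X_{-2(t-s)}(x,|\xi|\nabla\varphi_J),s\bigr)\,ds,
\]
valid in the whole space by \propref{solref2} (the free terms $w_k^\emptyset=-i\int_0^t(\Delta w_{k-1}^\emptyset)(x-2(s-t)\xi,s)\,ds$ are treated the same, simpler way). On the dense open set where $s\mapsto J(x,\xi,t-s)$ is locally constant we differentiate under the integral: the Laplacian places two extra spatial derivatives on $w_{k-1}^{J'}$, $J':=J(x,\xi,t-s)$, in its first slot, while $D_\xi^\alpha D_x^\beta$ is distributed, by Leibniz and the chain rule through $\hat X_{-2(t-s)}$ and $g_{\varphi_J}$, onto $g_{\varphi_J}$, onto $\hat X_{-2(t-s)}$, and onto $w_{k-1}^{J'}$ in both of its slots. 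Since $|J'|\le|J|$ and, by \eqref{eq:UinfJ}--\eqref{eq:suppJ} and the cylinder choice of $\mathcal{U}_\infty$, the point $\hat X_{-2(t-s)}(x,|\xi|\nabla\varphi_J)$ lies in $\mathcal{U}_\infty$ whenever it contributes, the induction hypothesis applies and bounds every $D_{x'}^\gamma D_\xi^\delta w_{k-1}^{J'}$ that appears (with $|\gamma|+|\delta|\le|\alpha|+|\beta|+2$) by $C_{\gamma,\delta}^{|J|}\,h^{-(2(k-1)+|\gamma|+|\delta|)c\epsilon}\le C_{\gamma,\delta}^{|J|}\,h^{-(2k+|\alpha|+|\beta|)c\epsilon}$. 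Multiplying by the third and fourth facts, integrating in $s$ over $[0,t]$ (a further factor $t\lesssim|\log h|$, absorbed into $h^{-c\epsilon}$ by enlarging $c$), and summing over the at most $|J|+1$ local pieces of $s\mapsto J(x,\xi,t-s)$, we obtain the desired bound for $w_k^J$, which closes the induction and proves \corref{boundsdirect}.

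The main obstacle is the geometric bookkeeping behind the third and fourth facts: one must check that differentiating the amplitudes $\Lambda\varphi_J,g_{\varphi_J}$ and the backward flow $\hat X_{-2t}$ costs at most a constant to the power $|J|$ — not worse — and, crucially, that the contributing orbits never leave $\mathcal{U}_\infty$ nor approach the tangent set $W_{\text{tan}}$, so that the $C^\infty$-regularity of \lemref{hold12} and the uniform phase bounds of \propref{contrder}, \propref{decder} are genuinely available all along each orbit. Granted this, the induction on $k$ — in which each increment of $k$ contributes exactly the two spatial derivatives coming from $\Delta$ — is routine.
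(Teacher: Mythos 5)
Your proof is correct and takes essentially the same route as the paper's, which is very terse: the paper gives an explicit closed form $1/(1+lH+l^{2}G)$ for the Gaussian-curvature ratios appearing in $\Lambda\varphi_{J}$ and then simply asserts that differentiating the recursive formulas of Proposition~\propref{solref}, using Propositions~\propref{contrder}, \propref{decder} and (\ref{eq:contrq}), gives the result. You supply the induction on $k$ implicit in that assertion, the need to carry $D_{x}^{\beta}$-derivatives because $\Delta$ appears in the recursion, and the chain-rule/Leibniz bookkeeping through $\hat X_{-2t}$ and the amplitudes; your only real divergence is arguing boundedness of the curvature ratios via a lower curvature bound after the first reflection (treating the flat initial factor implicitly) rather than via the paper's explicit formula, which handles the degenerate initial phase uniformly — both work.
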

\begin{proof}
We have
\[
\frac{G\varphi_{J}(x)}{G\varphi_{J}(X^{-1}(x,\nabla\varphi_{J}(x,\xi))}=\frac{1}{1+lH(x,\xi)+l^{2}G(x,\xi)}
\]
where $l=d(x,X^{-1}(x,\nabla\varphi_{J}(x,\xi))$, $H$ is the trace
and $G$ the determinant of the second fundamental form of the level
surface of $\varphi_{J}$, that is
\[
D_{x}^{2}\varphi_{J}(X^{-1}(x,\nabla\varphi_{J}(x,\xi)).
\]
Therefore, differentiating the explicit expressions of Proposition
\propref{solref}, together with the estimates of Proposition \propref{decder},
Proposition \ref{prop:contrder}, and (\ref{eq:contrq}), gives the
result. 
\end{proof}

\subsection{Decay of the reflected solutions}

The principal result who permits to estimate the decay of the reflected
solutions is the convergence of the product of the Gaussian curvatures
$\Lambda\varphi_{J}$ obtained by \cite{Ikawa2,IkawaMult} and \cite{plaques}.
In the present framework of two obstacles, it writes:
\begin{prop}
\label{prop:convL}Let $0<\lambda<1$ be the product of the two eigenvalues
lesser than one of the Poincar\^{o} map associated with the periodic trajectory.
Then, there exists $0<\alpha<1$, and for $I=(1,2)$ and $I=(2,1)$,
for every $l\in\{\{1\},\{2\},\emptyset\}$, there exists a $C^{\infty}$
function $a_{I,l}$ defined in $\mathcal{U}_{\infty}$, such that,
for all $J=(\underset{r\text{ times}}{\underbrace{I,\dots,I}},l)$,
we have
\[
\underset{\mathcal{U}_{\infty}}{\sup}|\Lambda\varphi_{J}-\lambda^{r}a_{I,l}|_{m}\leq C_{m}\lambda^{r}\alpha^{|J|}.
\]
\end{prop}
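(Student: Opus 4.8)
The plan is to translate the statement into a question about the \emph{second fundamental forms of the wavefronts} carried by the reflected phases along the ray, and then to exploit that the periodic trajectory is hyperbolic --- which here is precisely the strict convexity of $\Theta_{1},\Theta_{2}$ --- to produce a contraction.

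First I would rewrite $\Lambda\varphi_{J}$ as a product of ``Riccati'' factors. With $z_{k}=X^{-k}(x,|\xi|\nabla\varphi_{J})$ the successive backward reflection points ($z_{0}:=x$), $l_{k}=|z_{k}-z_{k+1}|$ the lengths of the free segments, and $J^{(k)}$ the word $J$ with its last $k$ letters removed (so $\varphi_{J^{(k)}}$ is the phase carried on $[z_{k+1},z_{k}]$, $J^{(|J|)}=\emptyset$, $\varphi_{\emptyset}=\varphi$), the classical law for the evolution of the Gaussian curvature of a wavefront along a free segment gives
\[
\Lambda\varphi_{J}(x,\xi)=\prod_{k=0}^{|J|}\det\bigl(\mathrm{Id}+l_{k}\,\mathrm{II}_{\varphi_{J^{(k)}}}(z_{k+1})\bigr)^{-1/2},
\]
where $\mathrm{II}_{\psi}(z)$ denotes the second fundamental form at $z$ of the level surface of $\psi$ through $z$ with respect to $-\nabla\psi$, a symmetric nonnegative $2\times2$ matrix by property $(P)$. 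Across the reflection at $z_{k+1}$, the matrix $\mathrm{II}_{\varphi_{J^{(k)}}}(z_{k+1})$ is the image of $\mathrm{II}_{\varphi_{J^{(k+1)}}}(z_{k+2})$ under a fixed smooth map --- free transport along the length $l_{k+1}$, then the reflection law off the strictly convex hypersurface $\partial\Theta_{j}$ at $z_{k+1}$, which involves the second fundamental form of $\partial\Theta_{j}$ and the incidence angle. In particular $\mathrm{II}_{\varphi_{J^{(k)}}}(z_{k+1})$ is, by construction, the image of the flat wavefront $\mathrm{II}\equiv0$ under $|J|-k$ such transport-and-reflection steps.

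The key dynamical input is then the \emph{contraction}: because the obstacles are strictly convex, one full period of this recursion (two segments, two reflections) is, near the periodic ray and uniformly on $\mathcal{U}_{\infty}$, a strict contraction on the relevant part of the Lagrangian Grassmannian, with a unique fixed point $\mathrm{II}^{\infty}_{I}$ --- the outgoing invariant Lagrangian of the linearized Poincar\'{e} map. This is exactly the mechanism underlying \propref{contrder} and \propref{opens} in \cite{Ikawa2,IkawaMult,plaques}. Iterating from $\mathrm{II}\equiv0$ therefore yields, after possibly shrinking $\alpha$,
\[
\bigl|\mathrm{II}_{\varphi_{J^{(k)}}}(z_{k+1})-\mathrm{II}^{\infty}_{I}\bigr|\lesssim\alpha^{\,|J|-k}\qquad\text{on }\mathcal{U}_{\infty},
\]
together with the geometric convergence, as $|J|\to\infty$ along the periodic pattern, of $\nabla\varphi_{(I^{m},l)}$, of the reflection points $z_{k}$ and of the lengths $l_{k}$ towards their periodic-orbit limits. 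Differentiating the recursion and invoking the uniform derivative bounds of \propref{contrder} and \propref{decder} (so as to interpolate, or to rerun the contraction on the differentiated quantities), the same estimates hold in every $C^{m}(\mathcal{U}_{\infty})$.

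Finally I would collect the factors. Writing $J=(\underbrace{I,\dots,I}_{r},l)$, I group the $|J|+1$ factors of the product above into $r$ ``period blocks'' of two consecutive factors, plus the $O(1)$ remaining factors attached to the flat end --- where $\varphi_{J^{(k)}}$ is $\varphi$ reflected only boundedly many times, hence a fixed phase independent of $r$ --- and to the tail $l$ near $x$. By the previous step, a period block sitting $p$ periods away from the flat end has product $\lambda\bigl(1+O(\alpha^{2p})\bigr)$ once $p$ is large; the identification of the limiting per-period factor with $\lambda$, the product of the two eigenvalues of modulus $<1$ of the Poincar\'{e} map, is the linear-algebra computation of \cite{Ikawa2,IkawaMult}: at $\mathrm{II}^{\infty}_{I}$ this factor is the amount by which the return map contracts wavefront density over one period, i.e.\ the inverse of the unstable Jacobian, which is $\lambda$. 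The finitely many boundary blocks converge, as $r\to\infty$ and in every $C^{m}(\mathcal{U}_{\infty})$, to smooth functions of $(x,\xi)$ (using the convergence of the $z_{k}$ and $l_{k}$), while $\prod_{p}\bigl(1+O(\alpha^{2p})\bigr)=1+O(\alpha^{\,|J|})$. Multiplying everything out gives $\Lambda\varphi_{J}=\lambda^{r}a_{I,l}(x,\xi)+O(\lambda^{r}\alpha^{\,|J|})$ with $a_{I,l}\in C^{\infty}(\mathcal{U}_{\infty})$, and the same bookkeeping carried out on the $C^{m}$-convergent factors yields $|\Lambda\varphi_{J}-\lambda^{r}a_{I,l}|_{m}\le C_{m}\lambda^{r}\alpha^{\,|J|}$. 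The main obstacle is the third step: establishing the contraction of the curvature recursion uniformly on $\mathcal{U}_{\infty}$, and above all upgrading it to geometric convergence in \emph{all} the $C^{m}$ norms --- this is where strict convexity (hyperbolicity) is genuinely used, and where one leans on \cite{Ikawa2,IkawaMult,plaques}; the remainder is bookkeeping.
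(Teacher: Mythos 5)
The paper does not prove Proposition~\propref{convL}; it is imported verbatim from \cite{Ikawa2,IkawaMult} and \cite{plaques}, specialised to the two-obstacle setting. Your sketch is a faithful reconstruction of the mechanism in those references: rewrite $\Lambda\varphi_{J}$ as the Riccati-type product $\prod_{k}\det\bigl(\mathrm{Id}+l_{k}\,\mathrm{II}_{k}\bigr)^{-1/2}$ over the free segments of the backward trajectory, observe that the second fundamental form $\mathrm{II}_{k}$ of the wavefront obeys a transport-and-reflection recursion which (by strict convexity of $\Theta_{1},\Theta_{2}$) is a uniform contraction on $\mathcal{U}_{\infty}$ toward the outgoing invariant Lagrangian of the linearised Poincar\'e map, and identify the per-period contraction of the product at that fixed point with $\lambda$ by the linear-algebra computation of Ikawa. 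Since you, like the paper, ultimately delegate the genuinely hard step --- uniform contraction of the curvature recursion and its upgrade to geometric convergence in every $C^{m}$ norm --- to the same references, the two treatments rest on identical foundations; yours has the pedagogical advantage of unpacking what the citation contains. One bookkeeping remark: the period blocks near the flat end are genuinely far from $\lambda$ (the curvature there is still close to $0$, not to $\mathrm{II}^{\infty}$), so rather than writing each block as $\lambda(1+O(\alpha^{2p}))$ it is cleaner to note that the ratios $B_{p}/\lambda$ are bounded, converge geometrically to $1$ as $p\to\infty$, and converge (as $r\to\infty$, via the geometric convergence of $z_{k},l_{k}$) to limits whose infinite product defines $a_{I,l}$; the estimate then reads off the geometric tail. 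Your paragraph already contains this, but the phrasing slightly conflates the near-flat-end blocks with the asymptotically-periodic ones.
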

We deduce the following bounds:
\begin{prop}
\label{prop:essbouds}We following bounds hold on $\mathcal{U}_{\infty}$:
\[
|w_{k}^{J}|_{m}\leq C_{k}\lambda^{|J|}h^{-(2k+m)c\epsilon}.
\]
Moreover, on the whole space, $|w_{k}^{J}|_{m}\leq C_{k}h^{-(2k+m)c\epsilon}.$
\end{prop}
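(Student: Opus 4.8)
The plan is to prove both estimates by induction on $k$, using the explicit expressions for the $w_k^J$ furnished by Proposition~\propref{solref} — which by Proposition~\propref{solref2} are valid on the whole space throughout the range $0\le t\le\epsilon|\log h|$ — and differentiating them in the spatial directions only. The argument runs parallel to that of Corollary~\corref{boundsdirect}, with two differences: here we take $x$-derivatives rather than $\xi$-derivatives, which are controlled \emph{uniformly in $J$} by Proposition~\propref{contrder} (in place of Proposition~\propref{decder}); and the extra decay $\lambda^{|J|}$ on $\mathcal{U}_\infty$ is extracted from Proposition~\propref{convL}.

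For the base case $k=0$ we have $w_0^J(x,t)=\Lambda\varphi_J(x,\xi)\,q\bigl(\hat X_{-2t}(x,|\xi|\nabla\varphi_J(x,\xi)),\xi\bigr)$, the innermost case $J=\emptyset$ being the free-space solution $w^\emptyset_0(x,t)=q(x-2t\xi,\xi)$, for which the bound is immediate. For the curvature prefactor, Proposition~\propref{convL} yields $|\Lambda\varphi_J|_m\lesssim\lambda^{|J|}$ on $\mathcal{U}_\infty$; on the whole space $\Lambda\varphi_J$ and its spatial derivatives are bounded uniformly in $J$, the strict convexity together with the condition $-n\cdot\nabla\varphi\ge\delta_1$ defining $\Gamma_p(\varphi)$ preventing any blow-up of the Gaussian curvature ratios near tangent rays. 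For the second factor we apply the chain rule to $\partial^m_x\bigl[q(\hat X_{-2t}(x,|\xi|\nabla\varphi_J(x,\xi)),\xi)\bigr]$: the map $x\mapsto\hat X_{-2t}(x,|\xi|\nabla\varphi_J(x,\xi))$ is a composition of roughly $|J|$ reflection-and-free-flight maps, each smooth with bounded derivatives on the relevant compact set away from $W_{\text{tan}}$ by Lemma~\lemref{hold12}, so all of its spatial derivatives are $\le C^{|J|}$ (using also the uniform bounds of Proposition~\propref{contrder} on the $x$-derivatives of $\nabla\varphi_J$); and the derivatives of $q=q_{\epsilon,h,N}$ in its first argument obey $|\partial^\alpha q|\lesssim h^{-2|\alpha|c\epsilon}$, which is (\ref{eq:contrq}) for $\tilde q_{\epsilon,h}$ and survives the expansion (\ref{eq:defq}) because the correction terms there carry compensating powers of $h$ for $\epsilon$ small. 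This gives $|w_0^J|_m\lesssim\lambda^{|J|}C^{|J|}h^{-2mc\epsilon}$ on $\mathcal{U}_\infty$, and $\lesssim C^{|J|}h^{-2mc\epsilon}$ on the whole space.

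For the inductive step we differentiate $w_k^J(x,t)=-i\int_0^t g_{\varphi_J}(x,t-s,\xi)\,\Delta w_{k-1}^{J(x,\xi,t-s)}\bigl(\hat X_{-2(t-s)}(x,|\xi|\nabla\varphi_J),s\bigr)\,ds$ under the integral sign. Each $\Delta$ costs two spatial derivatives, which accounts for the extra $h^{-2c\epsilon}$ per unit increment of $k$; the factor $g_{\varphi_J}$, being again a product of curvature ratios along a trajectory that — by Lemma~\lemref{supprought} and the support analysis of Lemmas~\lemref{support}--\lemref{supportchi} — stays in $\mathcal{U}_\infty$ where it matters, concatenates with the curvature prefactor carried inductively by $w_{k-1}^{J(x,\xi,t-s)}$ to reconstitute a $\Lambda\varphi_J$-type product along the full backward trajectory, so that the $\lambda^{|J|}$ decay on $\mathcal{U}_\infty$ (resp. uniform boundedness on the whole space) is preserved. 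The induction hypothesis and the same chain-rule bounds as above then give $|w_k^J|_m\lesssim\lambda^{|J|}C_k^{|J|}h^{-(2k+m)c\epsilon}$ on $\mathcal{U}_\infty$ and $\lesssim C_k^{|J|}h^{-(2k+m)c\epsilon}$ on the whole space, with $C_k$ possibly growing in the (fixed) index $k$.

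It remains to absorb the factors $C_k^{|J|}$. By Lemma~\lemref{support}, $w_k^J$ vanishes unless $c_1|J|\le t\le\epsilon|\log h|$, so wherever the left-hand side is not already zero one has $|J|\le\epsilon|\log h|/c_1$, hence $C_k^{|J|}\le h^{-\epsilon(\log C_k)/c_1}$, which is reabsorbed into $h^{-(2k+m)c\epsilon}$ after enlarging $c$ (equivalently, having taken $\epsilon$ small from the start); note that when $m=0$ no chain rule is invoked and the bound is simply $|\Lambda\varphi_J|\,\sup|q|$, already clean. The main obstacle is exactly this last balancing: the estimate holds only because the logarithmic restriction $|J|\lesssim|\log h|$ on the support of $w_k^J$ lets the geometric decay of Proposition~\propref{convL} and the harmless $h^{-O(\epsilon)}$ losses win against the exponential-in-$|J|$ growth produced by differentiating the long composition that defines the billiard flow.
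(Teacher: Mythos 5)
Your proof uses the same ingredients as the paper — the explicit formulas of Proposition~\propref{solref}, the curvature decay of Proposition~\propref{convL}, and the symbol bounds~(\ref{eq:contrq}) — but there is a real gap in your chain-rule estimate for $x\mapsto \hat{X}_{-2t}(x,|\xi|\nabla\varphi_J(x,\xi))$. You bound its spatial derivatives by $C^{|J|}$, treating it as a generic composition of $|J|$ billiard maps (the kind of crude bound that does appear in the proof of Proposition~\propref{decder} for the mixed $\xi$-derivatives, where the statement allows $D_{\alpha,\beta}^{|J|}$), and then you propose to reabsorb $C_k^{|J|}$ into the $h$-losses using $|J|\lesssim|\log h|$. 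This does not yield the proposition as stated: the bound has $C_k$, not $C_k^{|J|}$, and you cannot simply enlarge $c$, which is the fixed constant from \lemref{distsup}. More to the point, the absorption is incompatible with how the proposition is used in Section~5: the summation $\sum_J |w_k^J|\lesssim h^{-2kc\epsilon}\lambda^{t/c_2}$, hence the exponential-in-$t$ decay $|\chi_+S_K|\lesssim e^{-\nu t}/h^{3/2}$, requires the constant in front of $\lambda^{|J|}$ to be independent of $J$. Replacing $\lambda^{|J|}$ by $(C_k\lambda)^{|J|}$ with $C_k\lambda\geq 1$ destroys the geometric decay, leaves only a crude $h^{-O(\epsilon)}$ bound uniform in $t$, and the dispersive estimate~(\ref{eq:ultimred}) no longer follows.

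The factor $C^{|J|}$ is in fact absent because the backward flow in the explicit formula is not a generic billiard map: it is the flow restricted to the Lagrangian $\{(x,|\xi|\nabla\varphi_J(x,\xi))\}$, on which the backward map is controlled by the wave-front geometry rather than by worst-case hyperbolicity (the level surfaces of $\varphi_J$ have curvature uniformly controlled in $J$ by Proposition~\propref{contrder}, and the backward flow focuses them, with area ratios exactly the gaussian-curvature products of Proposition~\propref{convL}). Concretely, rather than chain-ruling the closed form for $w_k^J$, apply $(a\cdot\nabla)^m$ directly to the transport equation $(\partial_t+2|\xi|\nabla\varphi_J\cdot\nabla+|\xi|\Delta\varphi_J)w_k^J=-i\Delta w_{k-1}^J$: the commutators produce coefficients controlled by $|\nabla\varphi_J|_{m+1}\lesssim 1$ uniformly in $J$ (Proposition~\propref{contrder}), and integrating the resulting lower-triangular system along rays via \lemref{allongrays} accumulates only the curvature ratios, whose geometric decay (\propref{convL}) keeps the associated Gr\"onwall factor bounded as $|J|\to\infty$, with at most polynomial-in-$t$ overhead (harmless $h^{-O(\epsilon)}$ after using $t\lesssim|\log h|$). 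Your remark that the $m=0$, $k=0$ case is clean was the right instinct; the transport structure lets you avoid the raw billiard chain rule for all $k,m$, and that is exactly what makes the constant $J$-independent.
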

\begin{proof}
Making use of the explicit expressions for the $w_{k}^{J}$ given
by Proposition \ref{prop:solref}, we get by Proposition \propref{convL},
using the remarks made in the proof of Corollary \corref{boundsdirect}
\[
|w_{k}^{J}|\leq C_{2k+m}|q_{\epsilon,h}|_{2k+m}\lambda^{|J|},
\]
and (\ref{eq:contrq}) permits to control $|q_{\epsilon,h}|_{2k+m}$
by $h^{-(2k+m)c\epsilon}$. The estimate in the whole space is obtained
in the same way without using Proposition \propref{convL}.
\end{proof}

\section{Proof of theorem 1}

Let $K\geq0$. By the previous section, the function
\[
(x,t)\rightarrow\frac{1}{(2\pi h)^{3}}\sum_{J\in\mathcal{I}}\int\sum_{k=0}^{K}h^{k}w_{k}^{J}(x,t,\xi)e^{-i(\varphi_{J}(x,\xi)|\xi|-t\xi^{2})/h}d\xi
\]
satisfies the approximate equation
\[
\partial_{t}u-ih\Delta u=-ih^{K}\frac{1}{(2\pi h)^{3}}\sum_{J\in\mathcal{I}}\int\Delta w_{K-1}^{J}(x,t,\xi)e^{-i(\varphi_{J}(x,\xi)|\xi|-t\xi^{2})/h}d\xi
\]
with data $\delta_{\epsilon,h,N}^{y}$. Using the fact that $e^{-i(t-s)h\Delta}$
is an $H^{m}$-isometry and the Duhamel formula, the difference from
the actual solution $e^{-ith\Delta}\delta^{y}$ is bounded in $H^{m}$
norm by 
\[
C\times|t|\times h^{K-3}\times\sup_{t,\xi}\sum_{J\in\mathcal{I}}\Vert\Delta w_{K-1}^{J}(\cdot,t,\xi)e^{-i(\varphi_{J}(\cdot,\xi)|\xi|-t\xi^{2})/h}\Vert_{H^{m}}.
\]
 So,
\begin{equation}
e^{-ith\Delta}\delta^{y}(x)=S_{K}(x,t)+R_{K}(x,t)\label{eq:sumdelta}
\end{equation}
with
\[
S_{K}(x,t)=\frac{1}{(2\pi h)^{3}}\sum_{J\in\mathcal{I}}\int\sum_{k=0}^{K}h^{k}w_{k}^{J}(x,t,\xi)e^{-i(\varphi_{J}(x,\xi)|\xi|-t\xi^{2})/h}d\xi
\]
and, for $0\leq t\leq\epsilon|\log h|$
\begin{equation}
\Vert R_{K}(\cdot,t)\Vert_{H^{m}}\lesssim|\log h|h^{K-3}\sup_{t,\xi}\sum_{J\in\mathcal{I}}\Vert\Delta w_{K-1}^{J}(\cdot,t,\xi)e^{-i(\varphi_{J}(\cdot,\xi)|\xi|-t\xi^{2})/h}\Vert_{H^{m}}.\label{eq:rK1}
\end{equation}

\subsection*{The reminder}

We first deal with the reminder term $R_{K}$. Let us denote
\[
W_{K-1}^{J}(x,t)=\Delta w_{K-1}^{J}(\cdot,t,\xi)e^{-i(\varphi_{J}(\cdot,\xi)|\xi|-t\xi^{2})/h}
\]
Notice that, by construction of the $w_{k}$'s, $w_{k}^{J}$ is supported
in a set of diameter $(C+\beta_{0}t)$. Therefore, using Proposition
\ref{prop:essbouds} to control the derivatives coming from $w_{K-1}$
and Proposition \propref{contrder} to control the derivatives coming
from the phase we get:
\[
\Vert\partial^{m}W_{K-1}^{J}\Vert_{L^{2}}\lesssim C_{K}(1+\beta_{0}t)^{\frac{1}{2}}\Vert\partial^{m}W_{K-1}^{J}\Vert_{L^{\infty}}\lesssim C_{K}(1+t)^{\frac{1}{2}}h^{-m}\times h^{-(2K+m+2)c\epsilon}
\]
and thus, by (\ref{eq:rK1}) and the Sobolev embedding $H^{2}\hookrightarrow L^{\infty}$
, for $0\leq t\leq\epsilon|\log h|$
\begin{equation}
\Vert R_{K}\Vert_{L^{\infty}}\lesssim|\log h|^{\frac{3}{2}}h^{K(1-2c\epsilon)-5-4c\epsilon}|\left\{ J\in\mathcal{I},\text{ s.t }w_{K-1}^{J}\neq0\right\} |.\label{eq:rK0}
\end{equation}
Note that $w_{K-1}^{J}(t)\neq0$ implies by \lemref{support} that
$|J|\leq c_{1}t$, and $|\left\{ J\in\mathcal{I},\text{ s.t }w_{K-1}^{J}\neq0\right\} |$
is bounded by the number of elements in 
\[
\left\{ \emptyset,(1),(2),(1,2),(2,1),(1,2,1),(2,1,2),\dots,(1,2,\dots),(\underset{\lceil c_{1}t\rceil}{\underbrace{2,1,\dots}})\right\} ,
\]
that is $1+2\lceil c_{1}t\rceil$, so
\begin{equation}
|\left\{ J\in\mathcal{I},\text{ s.t }w_{K-1}^{J}\neq0\right\} |\lesssim(1+t)\label{eq:wkJnonnul}
\end{equation}
and therefore, according to (\ref{eq:rK0}), for $0\leq t\leq\epsilon|\log h|$
\begin{align*}
\Vert R_{K}\Vert_{L^{\infty}} & \lesssim C_{K}|\log h|^{\frac{5}{2}}h^{K(1-2c\epsilon)-5-4c\epsilon}\\
 & \lesssim C_{K}h^{K(1-2c\epsilon)-6-4c\epsilon}.
\end{align*}
We take $\epsilon>0$ small enough so that $2c\epsilon\leq\frac{1}{2}$
and we get
\[
\Vert R_{K}\Vert_{L^{\infty}}\leq C_{K}h^{\frac{K}{2}-7}.
\]
Let us fix $K=15$. Then, $\Vert R_{K}\Vert_{L^{\infty}}\leq C_{K}h^{-\frac{1}{2}}$.
Therefore, as $t\leq\epsilon|\log h|$ implies $h\leq e^{-\frac{t}{\epsilon}}$,
we get
\begin{equation}
\Vert R_{K}\Vert_{L^{\infty}}\leq C_{K}h^{-\frac{3}{2}}e^{-\frac{t}{\epsilon}}\label{eq:RK}
\end{equation}
for $0\leq t\leq\epsilon|\log h|$.

\subsection*{Times $t\geq t_{0}>0$}

Let us now deal with the approximate solution $S_{K}$, $K$ been
fixed and $x$ in $\text{Supp}\chi_{+}$. Let $t_{0}>0$ to be chosen
later. For $t\geq t_{0}$, by \lemref{nondeg} we can perform a stationary
phase on each term of the $J$ sum, up to order $h$. We obtain, for
$t\geq t_{0}$
\begin{multline}
S_{K}(x,t)=\frac{1}{(2\pi h)^{3/2}}\sum_{J\in\mathcal{I}}e^{-i(\varphi_{J}(x,s_{J}(t,x))|s_{J}(t,x)|-ts_{J}(t,x)^{2})/h}\left(w_{0}^{J}(t,x,s_{J}(t,x))+h\tilde{w}_{1}^{J}(t,x)\right)\\
+\frac{1}{h^{3/2}}\sum_{J\in\mathcal{I}}R_{\text{st.ph.}}^{J}(x,t)+\frac{1}{(2\pi h)^{3}}\sum_{J\in\mathcal{I}}\int\sum_{k=2}^{K}h^{k}w_{k}^{J}(x,t,\xi)e^{-i(\varphi_{J}(x,\xi)|\xi|-t\xi^{2})/h}d\xi\label{eq:sk}
\end{multline}
where $s_{J}(t,x)$ is an eventual unique critical point of the phase
(if it does not exist, the corresponding term is $O(h^{\infty})$
and by (\ref{eq:wkJnonnul}) it does not contribute). The term $\tilde{w}_{1}^{J}$
is a linear combination of
\begin{gather*}
D_{\xi}^{2}w_{0}^{J}(t,x,s_{J}(t,x)),w_{1}^{J}(t,x,s_{J}(t,x)),
\end{gather*}
and $R_{\text{st.ph.}}^{J}$ is the reminder involved in the stationary
phase, who verifies (see for example to \cite{semibook}, Theorem
3.15)
\begin{equation}
|R_{\text{st.ph.}}^{J}(x,t)|\leq h^{2}\sum_{|\alpha|\leq7}\sup|D_{\xi}^{\alpha}w_{k}^{J}(x,\cdot,t)|.\label{eq:remstph}
\end{equation}
We recall that by \lemref{supportchi}, for $0\leq t\leq\epsilon|\log h|$,
$\chi_{+}w_{k}^{J}$ is supported in $\mathcal{U}_{\infty}$. Therefore,
for $0\leq t\leq\epsilon|\log h|$ and all $0\leq k\leq K-1$, we
have, if $x\in\text{Supp}\chi_{+}$, using the estimate of Proposition
\propref{essbouds}, because $w_{k}^{J}(x,\xi,\cdot)$ is supported
in $\{c_{1}|J|\leq t\leq c_{2}(|J|+1)\}$ by \lemref{support},
\[
\sum_{J\in\mathcal{I}}|w_{k}^{J}|\leq C_{k}h^{-2kc\epsilon}\sum_{J\ |\ w_{k}^{J}\neq0}\lambda^{|J|}\leq C_{k}h^{-2kc\epsilon}\sum_{r\geq\frac{t}{c_{2}}}\lambda^{r-1}\lesssim C_{k}h^{-2kc\epsilon}\lambda^{\frac{t}{c_{2}}}
\]
so, for $0\leq t\leq\epsilon|\log h|$, 
\[
\sum_{J\in\mathcal{I}}|w_{k}^{J}|\leq C_{k}h^{-2kc\epsilon}e^{-\mu t}.
\]
for a certain $\mu>0$. We take $\epsilon>0$ small enough so that
$2Kc\epsilon\leq\frac{1}{2}$. We get 

\begin{align}
\sum_{J\in\mathcal{I}}|w_{k}^{J}| & \leq C_{k}h^{-\frac{1}{2}}e^{-\mu t},\ 1\leq k\leq K-1,\label{eq:sk1}\\
\sum_{J\in\mathcal{I}}|w_{0}^{J}| & \lesssim e^{-\mu t}.\label{eq:sk2}
\end{align}
Moreover, using (\ref{eq:remstph}) together with (\ref{eq:wkJnonnul}),
\lemref{support} and Corollary \corref{boundsdirect} we obtain,
fot $t\leq\epsilon|\log h|$
\begin{multline*}
\sum_{J\in\mathcal{I}}|R_{\text{st.ph.}}^{J}(x,t)|\leq h^{2}\sum_{J\in\mathcal{I}}\sum_{|\alpha|\leq7}\sup|D_{\xi}^{\alpha}w_{k}^{J}(x,\cdot,t)|\\
\leq h^{2-(2K+7)c\epsilon}|\left\{ J\in\mathcal{I},\text{ s.t }w_{K-1}^{J}\neq0\right\} |C^{\frac{t}{c_{1}}}\lesssim h^{2-(2K+7)c\epsilon}(1+t)C^{\frac{t}{c_{1}}}\\
\leq h^{2-(2K+7)c\epsilon}|\log h|h^{-\eta\epsilon}
\end{multline*}
where $\eta>0$ depends only of $\alpha_{0},\beta_{0}$, and the geometry
of the obstacles. Therefore, choosing $\epsilon>0$ small enough
\begin{equation}
\sum_{J\in\mathcal{I}}|R_{\text{st.ph.}}^{J}(x,t)|\lesssim h\leq e^{-t/\epsilon}.\label{eq:remphst}
\end{equation}
for $t\leq\epsilon|\log h|$. In the same way we get, taking $\epsilon>0$
small enough
\[
\sum_{J\in\mathcal{I}}|D_{\xi}^{2}w_{0}^{J}|\lesssim(1+t)C^{\frac{t}{c_{1}}}\lesssim|\log h|h^{-\nu\epsilon}\leq h^{-1/4}
\]
and therefore
\begin{equation}
\sum_{J\in\mathcal{I}}|D_{\xi}^{2}w_{0}^{J}|\leq h^{-\frac{1}{2}}e^{-t/4\epsilon}.\label{eq:sk3}
\end{equation}
 So, combining (\ref{eq:sk1}), (\ref{eq:sk2}), (\ref{eq:remphst})
and (\ref{eq:sk3}) with (\ref{eq:sk}), we obtain, for some $\nu>0$
\begin{equation}
|\chi_{+}S_{K}(x,t)|\lesssim\frac{e^{-\nu t}}{h^{3/2}}\ \text{ for }t_{0}\leq t\leq\epsilon|\log h|.\label{eq:SKgrand}
\end{equation}

\subsection*{Small times}

It remains now to deal with the case $0\leq t\leq t_{0}$. We take
$t_{0}=\frac{1}{2}c_{1}$, where $c_{1}$ is given by \lemref{support}.
Then, for $0\leq t\leq t_{0}$, $w_{k}^{J}=0$ for all $J$ such that
$|J|\geq1$ and all $k\in\mathbb{N}$, that is
\[
S_{K}(x,t)=\frac{1}{(2\pi h)^{3}}\int\sum_{k=0}^{K}h^{k}w_{k}^{\emptyset}(x,t,\xi)e^{-i((x-y)\cdot\xi-t\xi^{2})/h}d\xi,\ \text{for }0\leq t\leq t_{0}
\]
which is simply the approximate expression of the solution of the
Schr\"{o}dinger equation with data $\delta^{y}$, in the free space:
\[
S_{K}(x,t)=\left(e^{-ith\Delta_{0}}\delta^{y}\right)(x)+R_{K}^{\emptyset}(x,t)\ \text{for }0\leq t\leq t_{0}
\]
where $\Delta_{0}$ denote the Laplacian in the free space and $\Vert R_{K}^{\emptyset}\Vert_{H^{m}}\lesssim h^{K-3}\Vert\Delta w^{\emptyset}\Vert_{H^{m}}$.
The usual $L^{1}\rightarrow L^{\infty}$ estimate for the Schr\"{o}dinger
equation in the free space gives
\[
|e^{-ith\Delta_{0}}\delta^{y}|\lesssim\frac{1}{(ht)^{3/2}}\Vert\delta^{y}\Vert_{L^{1}}\lesssim\frac{1}{(ht)^{3/2}},
\]
and, dealing with $R_{K}^{\emptyset}$ as we did for $R_{K}$ we get
\begin{equation}
|S_{K}|\lesssim\frac{1}{(ht)^{3/2}},\ \text{for }0\leq t\leq t_{0}.\label{eq:petittemps}
\end{equation}

\subsection*{Conclusion }

Collecting (\ref{eq:RK}), (\ref{eq:SKgrand}) and (\ref{eq:petittemps})
we obtain
\[
|\chi_{+}e^{-ith\Delta}\delta_{\epsilon,h,N}^{y}|\lesssim\frac{1}{(ht)^{3/2}},\text{ for }0\leq t\leq\epsilon|\log h|
\]
that is (\ref{eq:ultimred}). This estimate suffises to obtain \thmref{main}
by the work of reduction done in Sections 2 and 3, as explained in
subsection 4.2. The \thmref{main} is thus demonstrated.

\subsection*{Aknowlegments}

The author warmly thanks Nicolas Burq for his time and enlightening
discussions.

\bibliographystyle{amsalpha}
\bibliography{refs}

\end{document}